\keywords{Quartic surface, Double solid, Hilbert scheme, infinitesimal Torelli problem bitangent line.} 
\subjclass{14E20 (14C30 14C34 14J29)}
\theoremstyle{plain}
\newtheorem{thm}{Theorem}[subsection]
\newtheorem{prop}[thm]{Proposition}
\newtheorem{cor}[thm]{Corollary}
\newtheorem{lem}[thm]{Lemma}
\theoremstyle{definition}
\newtheorem{defn}[thm]{Definition}
\newtheorem*{ackn}{Acknowledgements}
\newtheorem{rem}[thm]{Remark}
\newcommand{\sE}{\mathcal{E}}
\newcommand{\sF}{\mathcal{F}}
\newcommand{\sO}{\mathcal{O}}
\newcommand{\sQ}{\mathcal{Q}}
\newcommand{\sS}{\mathcal{S}}
\newcommand{\sT}{\mathcal{T}}
\newcommand{\mA}{\mathbb{A}}
\newcommand{\mC}{\mathbb{C}}
\newcommand{\mG}{\mathbb{G}}
\newcommand{\mP}{\mathbb{P}}
\newcommand{\SL}{\mathrm{SL}\,}
\newcommand{\Ga}{{\mathbb{G}_a}}
\numberwithin{equation}{section}
\newenvironment{fcaption}{\begin{list}{}{
\setlength{\leftmargin}{35pt}
\setlength{\rightmargin}{35pt}
\setlength{\labelsep}{5pt}
}}{\end{list}}
\author{Pietro Corvaja Francesco Zucconi}
\address{\newline D.M.I.F. \\
Univerit\`a degli studi di Udine\\
Udine, 33100, Italy.\newline
\texttt{pietro.corvaja@uniud.it\newline
francesco.zucconi@uniud.it}}
\begin{document}

\begin{center}
\textbf{\Large
Bitangents to a quartic surface \\
and infinitesimal deformations \\}
\par
\end{center}
{\Large \par}

$\;$

\begin{center}
Pietro Corvaja, Francesco Zucconi 
\par\end{center}

\vspace{5pt}

$\;$

\begin{fcaption} {\small  \item 
Abstract. 
We prove that the Hilbert scheme which parametrises bitangent lines to a general quartic surface is a smooth regular surface with no rational curves and with very ample canonical divisor. We also prove that it is a counterexample to infinitesimal Torelli and its infinitesimal deformation space is of dimension $20$.
}\end{fcaption}

\vspace{0.5cm}

%\maketitle

%\markboth{Francesco Zucconi}{Canonical surfaces}

%\tableofcontents

\section{Introduction} 

We work over $\mC$, the complex number field. 

In \cite{To} Ruggero Torelli introduced the concept of the Jacobi variety of a curve. His fundamental result is that two curves are isomorphic if and only if their Jacobian varieties are isomorphic as principally polarized Abelian varieties.
In \cite{G} Phillip Griffiths extended these investigations considering period maps, associating to a variety its Hodge structure, and asked whether for a simply connected surface of general type local Torelli holds, i.e., the local period map is an embedding.
This property is valid if the infinitesimal period map is injective. Griffiths showed that to compute the differential of the period map is equivalent to compute the cup product map between certain cohomological vector spaces. Since then a huge work has been devoted to the study of the differential of the period map; c.f. see: \cite{CMP}.
Here we can only recall that any infinitesimal deformation of a smooth variety $S$ is a flat morphism 
$\sS\to{\rm{Spec}}   \frac{\mathbb C[\epsilon]} {\epsilon^2}$, whose fiber over $0$ is $S$, and that it comes equipped with the extension class $\xi\in H^1(S,T_S)$ of the following exact sequence:
$$0\to\sO_S\to \sE\to\Omega^1_S\to 0$$
\noindent  where $T_S$ is the tangent sheaf of $S$, $\Omega^1_S$ is the cotangent sheaf of $S$ and $\sE$ is the cotangent sheaf of $\sS$ restricted to $S$; see: cf \cite[Theorem 4.50]{Vo}, \cite[Chap. 9, \S {} 9.1.2]{Vo}. 
The $i$-th exterior products in the above sequence yields $n$ short exact sequences 
$$0\to\Omega^{i-1}_S\to\bigwedge^{i} \sE\to\Omega^{i}_{S}\to 0$$
\noindent and $n$ co-boundary homomorphisms 
$\partial^{i}_{\xi} \colon H^0(S,\Omega^{i}_{S})\to H^1(S,\Omega^{i-1}_{S} )$, where $i=1,..., n$ and $n$ is the dimension of $S$. These co-boundary homomorphisms are computed via cup product with the class $\xi$. Essentially the differential of the Griffiths' period map is given by these cup products and to write that $S$ has the {\it{infinitesimal Torelli property}}, (iTp in the sequel) means that if the homomorphisms $\partial^{i}_{\xi}$, $i=1,...,n$ are all trivial then $\xi$ is trivial too. 

The literature on iTp is vast. We recall that if $n=1$ iTp
holds iff $g(C) = 1,2$ or iff
$g(C) \geq 3$ and $C$ is not hyperelliptic {\it cf.} \cite{To}, \cite{A}, 
\cite{We}, \cite{OS}; see also \cite{RZ4}. In higher dimensions, thanks to the spectacular interpretation given in \cite{G} of the infinitesimal variation of Hodge structure via natural homomorphisms of the Jacobian ring of a hypersurface, many authors were able to give an algebraic interpretations of iTp and to obtain many important results; c.f. see: \cite{CMP}. In \cite{Fl} it is shown that iTp holds for any smooth complete intersection inside $\mP^n$ with the only exception of hypersurfaces of degree $3$ in $\mP^3$ and intersections of two quadrics of even dimension where iTp does not hold.
Moreover also in other contexts,  as in the case of complete intersections inside homogeneous varieties, we find affermative answers to iTp;
see: \cite{Ko1}, \cite{Ko2}, \cite{Ko3},\cite{Ko4}, \cite{RZ1}, \cite{RZ2}, \cite{RZ3}. 

It is true that in some cases, as the one constituted by surfaces of general type with $p_g=q=0$ (where $p_g={\rm{dim}}_{\mathbb C} H^0(S,\Omega^2_S)$, $q={\rm{dim}}_{\mathbb C} H^0(S,\Omega^1_S)$), the hypothesis of iTp obviously holds while it does not hold, see \cite{Ca}. Here we have to mention that there are classes of counterexamples to the global Totelli claim, that is, with non injective period map. They are obtained by a deep study of
surfaces with $p_g = 1$, $q = 0$ and $1\leq K^2\leq 8$, see \cite{Td1}, \cite{Td2} and \cite{Ch}. In the case where $p_g=1$, $q=0$ and $K^2=2$ there exist also simply connected surfaces which are counterexample to the global Torelli property. However all these surfaces are rather special; in particular the canonical ring exhibits unusual properties or they contain special cycles; see: \cite{Td2}, \cite{Ch}.
There are counterexamples to iTp where the canonical sheaf is quasi very ample,
  i.e., the canonical map is a birational morphism and a local embedding on the complement of a finite set; see: \cite{BC}.  In the case of irregular surfaces there are counterexamples with very ample canonical sheaf: \cite{GZ}. There are also some counterexamples for varieties contained inside weighted projective spaces, see \cite{FRZ}. Nevertheless in each of these classes of counterexamples, as well as in the above mentioned case of hyperelliptic curves, one can obtain the negative answer to iTp by explicit algebraic computations based on special features of the (anti)canonical ring or by the existence of special kind of fibrations on the variety. In this sense one could have been led to expect that for a regular variety $S$ of general type (that is a variety of general type with no $(1,0)$-holomorphic forms)  iTp holds under standard geometrical assumptions as the very ampleness of the canonical sheaf, and the non existence of rational subvarieties inside $S$.

The counterexamples built in this work give a negative answer also for these cases. They are provided by surfaces classifying  the bitangents lines to general quartic surfaces.  Actually, the original motivation for this work was indeed the study of such surfaces, including the determination of all their numerical invariants, and their relations to some arithmetical problems concerning the geometry of quartic surfaces; see \cite{CZ1}, \cite{CZ2}. When our work was initiated, we came across the beautiful PhD-thesis of G.E. Welters, which contained the answer to many of our questions; the present work can be viewed in part as a gloss on Welters' paper \cite{W}.

\medskip
 Let $X\subset \mP^3$ be a general quartic surface, where "general" in this paper only means  that $X$ is smooth and does not contain any line of $\mP^3$. In \cite{Cl} the (unique) double cover $Q$ of $\mP^3$ branched on $X$ is called double solid. 
The works of Tihomirov \cite{T} and Welters \cite{W} contain  a detailed study of the Hilbert scheme $S_X$ of  `lines' of $Q$, i.e. curves having degree one with respect to a certain ample (but not very ample) divisor; we observe that these so called lines do not become actual lines under any projective embedding of the threefold.
In the same papers are laid the foundations to study double solids and the geometry associated to their intermediate Jacobians. We recall that Bombieri and Swinnerton-Dyer, in their seminal paper \cite{BSD}, used intermediate Jacobians precisely to study the variety of lines on a threefold; since then the use of intermediate Jacobians became a common tool to investigate rationality questions.
%Actually Clemens considered also the case where $X$ has some nodes. 
 In the quoted papers \cite{T} and \cite{W}  it is shown how to construct an \'etale double cover $f\colon S_X\to S$, where $S$ is the surface in the Grassmannian $\mathbb G(2,4)$ which parametrises bitangent lines to $X$. By a result of Welters it follows that the differential of the Albanese morphism $a_X\colon S_X\to{\rm{Alb}}(S_X)$ is injective; see Theorem \ref{collino} below. This implies that there are no rational curves on $S$; see Theorem \ref{norational}. In \cite {W}  a detailed study of the Gauss map associated to $a_X\colon S_X\to{\rm{Alb}}(S_X)$ is carried out. By this study we can explicitly clarify that the canonical morphism of $S$ is actually an embedding; see Theorem \ref{veryampleness}. We show that the tangent bundle sequence, see the sequence (\ref{simsigma}), provides a non trivial class $\xi$ such that $\partial^{2}_{\xi}=0$. In particular this means that iTp does not hold for $S$; we remark that since $S$ is a regular surface then $\partial^{1}_{\xi}$ is obviously trivial. 
The paper contains a study of the canonical image of $S$. In particular to show that $S$ is not $2$-normal we prove that the co-kernel of the standard multiplicative map:
$$
\mu\colon H^0(S,\omega_S)\otimes_\mathbb C H^0(S,\omega_S)\to H^0(S,\omega^{\otimes 2}_S)
$$
has dimension at least equal to $h^1(S,T_S)$; see Theorem \ref{nondue}. We end the paper showing that $h^1(S,T_S)=20$; see Theorem \ref{domandareferee}. We think that the fact that the dimension of the infinitesimal deformation space of a smooth quartic surface is the same as the one of the surface of its bitangent lines deserves to be studied.

Finally, our main result reads:

\noindent
{\bf{Main Theorem}}  {\it{Let $X$ be a quartic surface which contains no lines. Then the surface $S$ which parametrises its bitangent lines is a smooth surface of general type with very ample canonical sheaf. It contains no rational curves, and the infinitesimal Torelli property does not hold for it. Moreover the canonical model is not $2$-normal and the dimension of the infinitesimal deformations space of $S$ is $20$. }}

\medskip

See Theorem \ref{teoremaA}, Theorem \ref{nondue} and Theorem \ref{domandareferee}.
 In the rest of this introduction we will give an account of infinitesimal Torelli property and we will describe our construction.

\smallskip

Our study on $S$ gives not only a negative answer to some questions concerning the infinitesimal Torelli problem but also it shows that (iTp) does not hold for any of codimension $\geq 2$ subvariety of a Grassmannian, under standard geometrical assumptions. This contrasts with the well known fact that (iTp) holds  for complete intersections in certain homogeneous K\"ahler manifolds, under the same standard assumptions; see: \cite{Ko1} \cite{Ko2}, \cite{Ko3}, \cite{Ko4}, and indeed we know that it holds also for hypersurfaces in log parallelizable varieties; see \cite{R}, and conjecturally it holds for complete intersections in log parallelizable varieties.

Finally $2$-normality appears to be a crucial hypothesis to show that iTp holds in the interesting case given by a regular surface with very ample canonical bundle, no rational curves in it, and $2$-normal canonical image; see: \cite{Re}. On the other hand the $2$-normality assumption restricts a lot the range of the claim, as we can see by our study, where the obstruction to $2$-normality is essentially given by the condition $h^1(S,T_S)\neq 0$.

\section{Notation}
\label{notazioni}

We follow Grothendieck's notation; that is if $\sE$ is a vector bundle on a variety $Z$, we denote by $\mP(\sE)$ the projectivization of the dual bundle. In particular if $T_{\sE}$ is the tautological line bundle on $\mP(\sE)$ and $\rho\colon \mP(\sE)\to Z$ is the natural projection then $\rho_{\star}\sO_{\mP(\sE)}(T_{\sE})=\sE$.
\begin{enumerate}
\item $V$ is a vector space such that ${\rm{dim}}_{\mathbb C}V=4$. 
\item $\mP^3$ is the projective space of the one-dimensional quotients of the $4$ dimensional vector space $V$.
\item $V^{\vee}:={\rm{Hom}}(V,\mathbb C)$.
\item $\mathbb G$ denotes the Grassmannian variety of lines in $\mP^3$.
\item $\sF^{\vee}$ is the dual vector bundle of a vector bundle $\sF$.
\item ${\rm{Sym}}^j\sF$ is the $j$-symmetric product of the vector bundle $\sF$.
\item $X$ denotes a smooth surface of degree $4$ inside $\mP^3$. 
\item In this paper {\it{$X$ is general}} means that it contains no line  in it.
\item $S\subset\mathbb G$ is the surfaces of bitangent lines to $X$.
\item $\sO_S(K_S)$, $\omega_S$, $\Omega^2_S$ are three ways to denote the canonical sheaf of $S$.
\item $Q$ is the double cover of $\mP^3$ branched over $X$.
\item $S_X$ is the Hilbert scheme of lines of $Q$.
\end{enumerate}

\section{Some vector bundles on the Grassmannian} In this section we follow the notation of \cite{AM}.
The universal exact sequence on the Grassmannian ${\mathbb G}$: 
\begin{equation}\label{univesattasequenza}
0\to \sS^\vee\stackrel{\phi}{\to} V\otimes \sO_{\mathbb G}\stackrel{\psi}{\to} \sQ\to 0
\end{equation}
defines two rank-$2$ vector bundles on $\mathbb G$: $\sQ$, $\sS$.

%We recall that $\sO_{\mathbb G}(1)=\bigwedge^2\sQ=\bigwedge^2\sS$. In particular $\bigwedge^2\sS^\vee=\sO_{\mathbb G}(-1)$.

We consider $\mathbb G$ embedded inside $\mP^5$ by the Pl\"ucker embedding and we denote by $H_{\mathbb G}$ the corresponding hyperplane section.

\begin{lem}\label{ARRONDOondo} It holds: 
\begin{enumerate}
\item $\sO_{\mathbb G}(1)=\bigwedge^2\sQ=\bigwedge^2\sS$;
\item $({\rm{Sym}}^j (\sQ))^{\vee}=({\rm{Sym}}^j (\sQ))\otimes_{\sO_{\mathbb G}}\sO_{\mathbb G}(-jH_{\mathbb G})$.
\end{enumerate}
In particular $\bigwedge^2\sS^\vee=\sO_{\mathbb G}(-1)$.
\end{lem}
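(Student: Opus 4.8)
The plan is to deduce everything from the universal sequence (\ref{univesattasequenza}) together with two standard facts about rank-two bundles. First I would take top exterior powers (determinants) across the short exact sequence $0\to \sS^\vee\to V\otimes\sO_{\mathbb G}\to\sQ\to 0$. Since determinants are multiplicative on short exact sequences, this gives $\bigwedge^2\sS^\vee\otimes\bigwedge^2\sQ\cong\det(V\otimes\sO_{\mathbb G})=\bigwedge^4 V\otimes\sO_{\mathbb G}$, and the latter is canonically trivial because $\dim_{\mathbb C}V=4$ forces $\bigwedge^4 V\cong\mathbb C$. Hence $\bigwedge^2\sQ\cong(\bigwedge^2\sS^\vee)^\vee=\bigwedge^2\sS$.

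Next I would identify $\bigwedge^2\sQ$ with the Pl\"ucker bundle. By construction of the Pl\"ucker embedding, a point of $\mathbb G$, namely a two-dimensional quotient $V\twoheadrightarrow \sQ$, is sent to the one-dimensional quotient $\bigwedge^2 V\twoheadrightarrow\bigwedge^2\sQ$, so $\bigwedge^2\sQ$ is exactly the restriction of the tautological quotient line bundle, i.e. $\bigwedge^2\sQ=\sO_{\mathbb G}(1)=\sO_{\mathbb G}(H_{\mathbb G})$. Combined with the previous step this yields all the equalities in item (1), and, dualizing, the ``in particular'' claim $\bigwedge^2\sS^\vee=(\bigwedge^2\sQ)^\vee=\sO_{\mathbb G}(-1)$.

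For item (2) I would use the two elementary identities valid for a rank-two bundle $\sF$ over a scheme over $\mathbb C$, namely $\sF^\vee\cong\sF\otimes(\det\sF)^{-1}$ and, in characteristic zero, $({\rm{Sym}}^j\sF)^\vee\cong{\rm{Sym}}^j(\sF^\vee)$, together with ${\rm{Sym}}^j(\sF\otimes L)\cong{\rm{Sym}}^j\sF\otimes L^{\otimes j}$ for a line bundle $L$. Applying these with $\sF=\sQ$ and $L=(\det\sQ)^{-1}$, I obtain $({\rm{Sym}}^j\sQ)^\vee\cong{\rm{Sym}}^j\sQ\otimes(\det\sQ)^{-j}$, and substituting $\det\sQ=\sO_{\mathbb G}(H_{\mathbb G})$ from item (1) gives precisely $({\rm{Sym}}^j\sQ)^\vee={\rm{Sym}}^j\sQ\otimes\sO_{\mathbb G}(-jH_{\mathbb G})$.

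None of the steps is genuinely hard; the only points requiring care are the bookkeeping of conventions, that is, checking that the Pl\"ucker $\sO_{\mathbb G}(1)$ is the determinant of the quotient bundle $\sQ$ and not of $\sS$ under the Grothendieck ``quotient'' convention fixed in the Notation, and that the trivialization of $\bigwedge^4 V$ carries the correct normalization so that the isomorphisms are honest equalities of line bundles rather than merely equalities up to twist. I would therefore state explicitly the identification $\bigwedge^2\sQ=\sO_{\mathbb G}(1)$ as the normalization coming from the Pl\"ucker embedding, and derive the rest formally from it.
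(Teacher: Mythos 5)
Your proof is correct: the determinant argument on the universal sequence, the Pl\"ucker identification $\bigwedge^2\sQ=\sO_{\mathbb G}(1)$, and the rank-two identities $\sQ^\vee\cong\sQ\otimes(\det\sQ)^{-1}$ and $({\rm{Sym}}^j\sQ)^\vee\cong{\rm{Sym}}^j(\sQ^\vee)\cong{\rm{Sym}}^j\sQ\otimes(\det\sQ)^{-j}$ are all sound and correctly assembled. The paper itself gives no argument at all (its proof reads ``Easy. See \cite{AM}.''), so your writeup simply supplies the standard derivation that the citation delegates to the literature; there is no genuinely different approach to compare against.
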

\begin{proof}Easy. See c.f. \cite{AM}.
\end{proof}

We will need to compute some cohomology groups on a surface contained in $\mathbb G$. We will use the following standard exact sequences on $\mathbb G$; see c.f. \cite[page 1100]{AM}:
\begin{equation}\label{laduesimmmm}
0\to\sO_{\mathbb G}(-1)\to \bigwedge^2 V\otimes_{\sO_\mathbb G}\sO_{\mathbb G}\to V\otimes_{\sO_\mathbb G}\sQ\to {\rm{Sym}}^2(\sQ)\to 0 
\end{equation}

\begin{equation}\label{latreeeesimmmm}
0\to\sQ(1)\to \sQ\otimes_{\sO_{\mathbb G}} {\rm{Sym}}^2 (\sQ)\to {\rm{Sym}}^3(\sQ)\to 0
\end{equation}
where for every $m\in\mathbb Z$, for every $j\geq 1$ and for every vector bundle $\sF$ on $\mathbb G$ we set
${\rm{Sym}}^j (\sF)(m):=({\rm{Sym}}^j (\sF))\otimes_{\sO_{\mathbb G}}\sO_{\mathbb G}(mH_{\mathbb G})$. 

\begin{lem}\label{detdelsim2}
$ \bigwedge^3{\rm{Sym}}^2(\sQ)=\sO_\mathbb G(3)$.
\end{lem}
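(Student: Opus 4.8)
The plan is to reduce the statement to the standard determinant formula $\det({\rm{Sym}}^2\sE)=(\det\sE)^{\otimes 3}$ for a rank-two vector bundle $\sE$, and then to invoke Lemma \ref{ARRONDOondo}(1). First I would observe that since $\sQ$ has rank $2$, the bundle ${\rm{Sym}}^2(\sQ)$ has rank $3$, so $\bigwedge^3{\rm{Sym}}^2(\sQ)=\det{\rm{Sym}}^2(\sQ)$ is a line bundle; it therefore suffices to identify its class in $\Pic\mathbb G$.

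The cleanest route avoids Chern-class bookkeeping. Since we work over $\mC$, the tensor square decomposes as $\sQ\otimes\sQ={\rm{Sym}}^2(\sQ)\oplus\bigwedge^2\sQ$. Taking determinants of both sides, using that the determinant of the tensor product of two rank-two bundles is $(\det\sQ)^{\otimes 4}$, and noting that $\bigwedge^2\sQ$ is itself a line bundle (so equal to its own determinant), I obtain $(\det\sQ)^{\otimes 4}=\det({\rm{Sym}}^2\sQ)\otimes\bigwedge^2\sQ$, hence $\det({\rm{Sym}}^2\sQ)=(\det\sQ)^{\otimes 3}=(\bigwedge^2\sQ)^{\otimes 3}$. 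Finally, Lemma \ref{ARRONDOondo}(1) gives $\bigwedge^2\sQ=\sO_\mathbb G(1)$, whence $\bigwedge^3{\rm{Sym}}^2(\sQ)=\sO_\mathbb G(3)$, as claimed.

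As an alternative I could run the splitting principle: writing the Chern roots of $\sQ$ as $a,b$, the Chern roots of ${\rm{Sym}}^2(\sQ)$ are $2a,\,a+b,\,2b$, whose sum is $3(a+b)=3c_1(\sQ)$, so that $c_1({\rm{Sym}}^2\sQ)=3c_1(\sO_\mathbb G(1))$ and the determinant is $\sO_\mathbb G(3)$. I do not expect any genuine obstacle here; the statement is essentially formal. The only point requiring a word of care is the characteristic-zero hypothesis underlying the decomposition $\sQ^{\otimes 2}={\rm{Sym}}^2(\sQ)\oplus\bigwedge^2\sQ$, which is harmless since everything takes place over $\mC$.
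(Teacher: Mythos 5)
Your proof is correct and is essentially the paper's own argument: both take determinants of the decomposition $\sQ\otimes\sQ={\rm{Sym}}^2(\sQ)\oplus\bigwedge^2\sQ$ to get $\det({\rm{Sym}}^2\sQ)\otimes\bigwedge^2\sQ=(\det\sQ)^{\otimes 4}$ and then conclude via Lemma \ref{ARRONDOondo}(1). The splitting-principle remark is a fine alternative but not needed.
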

\begin{proof} Since ${\rm{Sym}}^2(\sQ)\oplus\bigwedge^2 \sQ=\sQ\otimes_{\sO_\mathbb G}\sQ$
then $$\bigwedge^3{\rm{Sym}}^2(\sQ)\otimes_{\sO_\mathbb G}  \bigwedge^2 \sQ ={\rm{det}}(\sQ\otimes_{\sO_\mathbb G}\sQ)=({\rm{det}}\sQ)^{\otimes 4}.$$ Hence the claim follow by Lemma \ref{ARRONDOondo} $(1)$.
\end{proof}

\begin{cor}\label{wedge2sim2}
$\bigwedge^2 {\rm{Sym}}^2(\sQ)\simeq {\rm{Sym}}^2(\sQ) \otimes_{\sO_\mathbb G} \sO_\mathbb G(1)$
\end{cor}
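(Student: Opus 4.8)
The plan is to deduce this from the general linear-algebra identity for the second exterior power of a rank-$3$ bundle, combined with the two results just established. First I would set $\sE:={\rm{Sym}}^2(\sQ)$, which is a rank-$3$ vector bundle on $\mathbb G$. For any rank-$3$ bundle the wedge multiplication
$$
\bigwedge^2\sE\otimes_{\sO_\mathbb G}\sE\longrightarrow \bigwedge^3\sE={\rm{det}}\,\sE
$$
is a perfect pairing, so it induces a canonical isomorphism $\bigwedge^2\sE\simeq \sE^\vee\otimes_{\sO_\mathbb G}{\rm{det}}\,\sE$. Applied to $\sE={\rm{Sym}}^2(\sQ)$ this gives
$$
\bigwedge^2{\rm{Sym}}^2(\sQ)\simeq ({\rm{Sym}}^2(\sQ))^\vee\otimes_{\sO_\mathbb G}\bigwedge^3{\rm{Sym}}^2(\sQ).
$$

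Next I would substitute the two already-proved facts into the right-hand side. By Lemma \ref{ARRONDOondo}$(2)$ with $j=2$ we have $({\rm{Sym}}^2(\sQ))^\vee={\rm{Sym}}^2(\sQ)\otimes_{\sO_\mathbb G}\sO_\mathbb G(-2H_\mathbb G)$, and by Lemma \ref{detdelsim2} we have $\bigwedge^3{\rm{Sym}}^2(\sQ)=\sO_\mathbb G(3)$. Combining these,
$$
\bigwedge^2{\rm{Sym}}^2(\sQ)\simeq {\rm{Sym}}^2(\sQ)\otimes_{\sO_\mathbb G}\sO_\mathbb G(-2H_\mathbb G)\otimes_{\sO_\mathbb G}\sO_\mathbb G(3H_\mathbb G)={\rm{Sym}}^2(\sQ)\otimes_{\sO_\mathbb G}\sO_\mathbb G(1),
$$
which is exactly the asserted isomorphism.

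There is no genuine obstacle here: the only point requiring a word of care is the rank-$3$ identity $\bigwedge^2\sE\simeq\sE^\vee\otimes{\rm{det}}\,\sE$, which holds over any base since the pairing above is fibrewise nondegenerate and hence an isomorphism of sheaves; as we only need an isomorphism of line-bundle-twisted sheaves (not a canonically normalized one), this is immediate. Everything else is a direct bookkeeping of Plücker twists using the previous two lemmas.
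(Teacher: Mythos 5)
Your proof is correct and is essentially the same as the paper's: both arguments use the perfect pairing $\bigwedge^2 {\rm{Sym}}^2(\sQ)\otimes_{\sO_\mathbb G}{\rm{Sym}}^2(\sQ)\to\bigwedge^3{\rm{Sym}}^2(\sQ)=\sO_\mathbb G(3)$ from Lemma \ref{detdelsim2} to identify $\bigwedge^2{\rm{Sym}}^2(\sQ)$ with $({\rm{Sym}}^2(\sQ))^\vee\otimes_{\sO_\mathbb G}\sO_\mathbb G(3)$, and then apply Lemma \ref{ARRONDOondo}(2) to remove the dual. The only difference is cosmetic: the paper first writes down the dual bundle $(\bigwedge^2{\rm{Sym}}^2(\sQ))^\vee={\rm{Sym}}^2(\sQ)\otimes_{\sO_\mathbb G}\sO_\mathbb G(-3)$ and then dualizes, whereas you invoke the rank-$3$ identity $\bigwedge^2\sE\simeq\sE^\vee\otimes{\rm{det}}\,\sE$ directly.
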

\begin{proof} By the natural pairing and by Lemma \ref{detdelsim2}
$$
(\bigwedge^2 {\rm{Sym}}^2(\sQ))\otimes_{\sO_\mathbb G} {\rm{Sym}}^2(\sQ)\to\bigwedge^3 {\rm{Sym}}^2(\sQ)=\sO_\mathbb G(3).
$$
 It follows:
$$(\bigwedge^2 {\rm{Sym}}^2(\sQ))^\vee={\rm{Sym}}^2(\sQ)\otimes_{\sO_\mathbb G} \sO_\mathbb G(-3)$$
that is $\bigwedge^2 {\rm{Sym}}^2(\sQ)={\rm{Sym}}^2(\sQ^\vee)\otimes_{\sO_\mathbb G} \sO_\mathbb G(3)$. By Proposition \ref{ARRONDOondo} $(2)$ we then have:
$$\bigwedge^2 {\rm{Sym}}^2(\sQ))=({\rm{Sym}}^2 (\sQ))\otimes_{\sO_{\mathbb G}}\sO_{\mathbb G}(-2)\otimes_{\sO_\mathbb G} \sO_\mathbb G(3)={\rm{Sym}}^2(\sQ)\otimes_{\sO_\mathbb G} \sO_\mathbb G(1).$$
\end{proof}

\section{The surface of bitangent lines}

\subsection{Bitangent lines to a general quartic surface} We recall briefly some results taken from \cite{T} and \cite{W}. Let $X\subset \mP^3$ be a smooth quartic surface such that there are no lines contained inside $X$ (i.e. a general quartic surface). We denote by $F(x_0,x_1,x_2,x_3)\in\mathbb C[x_0,x_1,x_2,x_3]$ an homogeneous polynomial of degree $4$ such that  
$$X:=\{P\in\mP^3\mid F(P)=0\}.$$
\noindent

\begin{defn}\label{definizionedibitangente} A line $l\subset\mP^3$ is a {\it{bitangent line to}} $X$ if it is tangent to $X$ at each point of $l\cap X$.
\end{defn}

Note that this definition includes the case of a quadritangent. 
We are interested in the scheme which parametrises lines $l\subset\mP^3$ which are bitangents to $X$.
\begin{defn} We call
\begin{equation}\label{superficiebitangenti}
S:=\{ [l]\in\mathbb G\mid l\, {\rm{is}}\,  {\rm{bitangent}}\, {\rm{to}}\, X\}.
\end{equation}
{\it{the variety of bitangents to $X$}}.
\end{defn}
\noindent The geometry of Fano threefolds is strongly intertwined with the one of $S$.

\subsection{Lines on the quartic double solid} Let $Q$ be the $2$-to-$1$ cover of $\mP^3$ branched over $X$. We consider the tautological divisor $T_{\mP/\mP^3}$ of $\mP:=\mP(\sO_{\mP^3}\oplus \sO_{\mP^3}(2))$ and the natural projection $\rho_{\mP}\colon \mP\to \mP^3$.
Then $\rho_{\mP\star}\sO_{\mP}(T_{\mP/\mP^3})=\sO_{\mP^3}\oplus \sO_{\mP^3}(2)$. Let $H_{\mP^3}$ the hyperplane section of $\mP^3$. We set: 
$\sO_{\mP}(n):=\rho_{\mP}^{\star}\sO_{\mP^3}(nH_{\mP^3})$. If $T_1\in H^0(\mP, \sO_{\mP}(T_{\mP/\mP^3}))$ and 
$T_{\infty}\in H^0(\mP, \sO_{\mP}(T_{\mP/\mP^3}\otimes_{\sO_\mP}\sO_{\mP}(-2))$ it is easy to show that $Q\in |2T_{\mP/\mP^3}|$ and 
$$
Q= T_1^2-F(x_0,x_1,x_2,x_3)T_{\infty}^2=0.
$$

\noindent By the standard theory of double covers applied to $\rho: Q\to {\mP^3}$  it follows:
\begin{lem}\label{invariantideltrifoglio} For the Hodge numbers of $Q$ it holds: $h^{i,j}(Q)=0$ if $i\neq j$, except $h^{1,2}(Q)=h^{2,1}(Q)=10$, $h^{i,i}(Q)=1$, $0\leq i\leq 3$. Moreover ${\rm{Pic}}(Q)= \rho^{\star}(H_{\mP^3})\cdot  \mathbb Z$.
\end{lem}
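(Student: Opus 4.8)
The statement to prove is Lemma \ref{invariantideltrifoglio}, which computes the Hodge numbers and Picard group of the double solid $Q$. My plan is to exploit the double-cover structure $\rho\colon Q\to\mP^3$ branched over the smooth quartic $X$, using the eigenspace decomposition of cohomology under the Galois involution.

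\medskip

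First I would record that for a finite double cover $\rho\colon Q\to\mP^3$ with the branch divisor $X\in|\sO_{\mP^3}(4)|$, the line bundle realizing the cover is $\sL=\sO_{\mP^3}(2)$ (since $2\sL\sim X$), and the pushforward decomposes as $\rho_{\star}\sO_Q=\sO_{\mP^3}\oplus\sO_{\mP^3}(-2)$ into the invariant and anti-invariant parts under the involution $\iota$. More generally I would use $\rho_{\star}\Omega^p_Q$ and the Hodge-theoretic refinement: the invariant part of $H^{p,q}(Q)$ is just $H^{p,q}(\mP^3)$, while the anti-invariant part is computed from the cohomology of $\mP^3$ twisted by $\sO_{\mP^3}(-2)$ against the sheaves of differentials. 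The plan is therefore to compute each $H^q(\mP^3,\Omega^p_{\mP^3}(-2))$ (and the relevant extension terms coming from $\Omega^p_Q$) and read off the Hodge numbers. Since $\mP^3$ has $h^{i,j}=0$ for $i\ne j$ and $h^{i,i}=1$, the invariant part contributes exactly the diagonal Hodge numbers $h^{i,i}(Q)=1$ for $0\le i\le 3$, and the interesting content lives entirely in the anti-invariant part.

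\medskip

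The key computation is then to show the anti-invariant part vanishes off the middle dimension and that $h^{1,2}_{\mathrm{anti}}(Q)=10$. Concretely I would compute $h^q(\mP^3,\Omega^p_{\mP^3}\otimes\sO_{\mP^3}(-2))$ using Bott's formula for twisted differentials on projective space; the twist by $-2$ is small enough that most groups vanish, and the only surviving contribution should produce the ten-dimensional space matching $H^0(\mP^3,\Omega^3_{\mP^3}(4))^{\vee}$ or equivalently the Jacobian-ring piece in degree governing $H^{2,1}$. Getting $h^{1,2}(Q)=10$ amounts to identifying this surviving group with something of dimension $\binom{4}{3}+\dots$ equal to $10$; I expect this to come out as the dimension of the degree-$(4\cdot 2-4)=4$ piece of a Jacobian-type ring, or directly from Bott. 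The fact that $X$ is smooth guarantees $Q$ is smooth so that Hodge symmetry $h^{1,2}=h^{2,1}$ holds and the standard vanishing theorems apply.

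\medskip

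For the Picard group statement $\Pic(Q)=\mZ\cdot\rho^{\star}H_{\mP^3}$, I would argue that since $H^{2,0}(Q)=H^{0,2}(Q)=0$ (which the Hodge computation above establishes) and $Q$ is simply connected, the Lefschetz $(1,1)$-theorem identifies $\Pic(Q)\cong H^2(Q,\mZ)$, and the latter is generated by the pullback of the hyperplane class by a Noether--Lefschetz type argument valid for a \emph{general} quartic $X$: the anti-invariant part of $H^2$ is purely of type $(1,1)$ of rank zero off the pullback, so $H^2(Q,\mZ)\cong\mZ$ generated by $\rho^{\star}(H_{\mP^3})$. The main obstacle I anticipate is the bookkeeping in the eigenspace decomposition of $H^{\bullet}(\Omega^{\bullet}_Q)$: one must correctly handle the sheaf $\Omega^1_Q$ on the cover (it is not simply a pullback, because of ramification along $X$), and feed in the conormal/ramification sequence to express $\rho_{\star}\Omega^p_Q$ in terms of twisted differentials on $\mP^3$. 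Once that sequence is set up, the cohomology is a routine application of Bott vanishing, and the numerology $h^{1,2}=10$ falls out.
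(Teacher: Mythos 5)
The paper offers no proof of this lemma at all: it simply cites Welters \cite[p.~8]{W}. Your proposal, by contrast, is an actual argument — the standard eigenspace computation for cyclic double covers — and its skeleton is sound: the invariant part of $H^{p,q}(Q)$ is $H^{p,q}(\mP^3)$, the anti-invariant part is $H^q(\mP^3,\Omega^p_{\mP^3}(\log X)\otimes\sO_{\mP^3}(-2))$ (the log-differentials sheaf is precisely the ramification correction you anticipate), and the residue sequence $0\to\Omega^p_{\mP^3}(-2)\to\Omega^p_{\mP^3}(\log X)(-2)\to\Omega^{p-1}_X(-2)\to 0$ combined with Bott vanishing on $\mP^3$ produces all the numbers. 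This is essentially Clemens' computation in \cite{Cl}, and it is a legitimate self-contained alternative to the citation.

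Two details, however, need repair. First, both of your concrete guesses for the ten-dimensional group are wrong: $H^0(\mP^3,\Omega^3_{\mP^3}(4))$ is one-dimensional (since $\Omega^3_{\mP^3}\cong\sO_{\mP^3}(-4)$), and the degree-$4$ piece of the Jacobian ring of $F$ has dimension $35-16=19$, which computes the primitive $h^{1,1}$ of the K3 surface $X$, not $h^{2,1}(Q)$. What your own machinery actually gives is: anti-invariant $H^{1,2}(Q)\cong H^2(\mP^3,\Omega^1_{\mP^3}(\log X)(-2))\cong H^2(X,\sO_X(-2))\cong H^0(\mP^3,\sO_{\mP^3}(2))^{\vee}$, of dimension $10$; equivalently, it is the degree-$2$ piece of the Jacobian ring of the weighted hypersurface $T_1^2=F$ in $\mP(1,1,1,1,2)$ (degree $2=2\cdot 4-6$, not $4$). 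Since you hedge and defer to Bott, the error is repairable, but as stated the numerology fails. Second, for the Picard group: simple connectivity of $Q$ must be justified (e.g.\ $Q$ is a Fano threefold, or apply the Lefschetz theorem to the quasi-smooth hypersurface $Q\subset\mP(1,1,1,1,2)$); no Noether--Lefschetz argument for general $X$ is needed or relevant, because your Hodge computation already gives $b_2(Q)=h^{1,1}(Q)=1$ for \emph{every} smooth quartic; and you must still verify that $\rho^{\star}H_{\mP^3}$ is a generator rather than a multiple of one: if $\rho^{\star}H_{\mP^3}=mL$ with $L$ a generator of $H^2(Q,\mathbb Z)\cong\mathbb Z$, then $m^3L^3=(\rho^{\star}H_{\mP^3})^3=2$, which forces $m=1$.
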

\begin{proof} See \cite[p.8]{W}.
\end{proof}
The anti-canonical divisor $-K_{Q}\sim \rho^{\star}(2H_{\mP^3})$ is ample and $Q$ is a Fano variety of index $2$. Hence there is the following natural notion of {\it{line}} of $Q$.

\begin{defn} A line of $Q$ is a connected subscheme $r\subset Q$ of pure dimension $1$ such that $r\cdot \rho^{\star}(H_{\mP^3})=1$.
\end{defn}

In \cite[Page 374]{T} there is a description of the lines of $Q$. Here we only recall that if $[l]\in S$ is a general point then $\rho^{\star}l=l^{1}\cup l^{2}$ where $l^{1},l^{2}$ are irreducible rational curves which mutually intersect into two points.Thanks to the polarisation on $Q$ given by  $\rho^{\star}(H_{\mP^3})$ we can construct the Hilbert scheme $S_X$ of lines of $Q$ and the reader can easily see a natural forgetful map: $$f\colon S_X\to S.$$

\subsection{Smoothness of the surface of bitangent lines}

The next Proposition is well-known, possibly since very long time. 
We include a proof of it because in both references quoted above, \cite{T} and \cite{W}, the same result is shown but in a different way. Indeed, the proof given in \cite{T} relies on an important result from \cite{Is} concerning the smoothness of the Hilbert schemes of Fano threefolds; while the one given in \cite{W} is obtained by proving first the smoothness of $S_X$ and then that $f\colon S_X\to S$ is an \'etale double cover; see \cite[Lemma 1.1 page 15]{W} and c.f. Proposition \ref{liscia} below. Our proof is very direct and elementary.
\begin{prop}\label{lisciezza} 
The scheme $S\subset\mathbb G$ is a smooth surface if $X$ contains no line.
\end{prop}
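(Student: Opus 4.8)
The plan is to verify smoothness pointwise, reducing everything to an explicit first‑order computation on the restriction of $F$ to a line. Fix a point $[l_0]\in S$. Because $X$ contains no line, the restriction $F|_{l_0}$ is a \emph{nonzero} binary quartic form on $l_0$; since $l_0$ is bitangent, $F|_{l_0}$ is a perfect square $g_0^2$ of a nonzero binary quadratic $g_0$. Hence exactly one of two cases occurs: either (I) $g_0$ has two distinct roots, so $l_0$ is tangent to $X$ at two distinct points $p\neq q$, or (II) $g_0$ is itself a square, so $l_0$ meets $X$ in a single point $p$ of multiplicity $4$ (a quadritangent). After acting by $\mathrm{PGL}(4)$ I may assume $l_0=\{x_2=x_3=0\}$, with $p=[1:0:0:0]$, $q=[0:1:0:0]$ and $F|_{l_0}=x_0^2x_1^2$ in case (I), and $p=[1:0:0:0]$, $F|_{l_0}=x_1^4$ in case (II). Note the crucial role of the hypothesis: if $X$ contained a line we could have $F|_{l_0}\equiv 0$, placing us at the vertex of the square‑cone below, where the argument collapses.

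Next I coordinatize $\mathbb{G}$ near $[l_0]$ by the lines $l_{a,b}=\{x_2=a_0x_0+a_1x_1,\ x_3=b_0x_0+b_1x_1\}$, with $(a_0,a_1,b_0,b_1)$ an affine chart centred at $[l_0]$, and expand
$$F(x_0,x_1,a_0x_0+a_1x_1,b_0x_0+b_1x_1)=\sum_{i=0}^{4}c_i(a,b)\,x_0^{4-i}x_1^{i}.$$
This realises $S$ locally as the preimage scheme $C^{-1}(\widehat Y)$, where $C\colon (a,b)\mapsto(c_0,\dots,c_4)$ and $\widehat Y\subset\mathbb{C}^5$ is the locus of coefficient vectors of perfect squares, i.e. the image of $(g_0,g_1,g_2)\mapsto (g_0^2,\,2g_0g_1,\,g_1^2+2g_0g_2,\,2g_1g_2,\,g_2^2)$. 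This is a three‑dimensional cone, smooth away from its vertex (the origin), and $C([l_0])=g_0^2$ is not the vertex since $g_0\neq 0$. Differentiating the parametrisation gives the tangent space to $\widehat Y$ at the relevant point: in case (I), at $(0,0,1,0,0)$, it is $\{c_0=c_4=0\}$; in case (II), at $(0,0,0,0,1)$, it is $\{c_0=c_1=0\}$. Therefore $S$ is smooth of dimension $2$ at $[l_0]$ precisely when $C$ is transverse to $\widehat Y$ there, that is, when the $2\times 4$ Jacobian of $(c_0,c_4)$ (resp. $(c_0,c_1)$) in the variables $(a_0,a_1,b_0,b_1)$ has rank $2$.

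The last step is to compute these Jacobians. Since $c_0(a,b)=F(1,0,a_0,b_0)$ depends only on $(a_0,b_0)$, one gets $\partial c_0/\partial a_0=\partial_{x_2}F(p)$ and $\partial c_0/\partial b_0=\partial_{x_3}F(p)$, while the first‑order term of $c_4$ (case (I), located at $q$) resp. $c_1$ (case (II), located at $p$) contributes exactly the entries $\partial_{x_2}F(q),\partial_{x_3}F(q)$ resp. $\partial_{x_2}F(p),\partial_{x_3}F(p)$ in the complementary chart variables $a_1,b_1$, where the row $dc_0$ vanishes. In both cases the two rows are independent as soon as $(\partial_{x_2}F,\partial_{x_3}F)$ does not vanish at the relevant tangent point. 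This is where the geometry enters: $X$ is smooth, so $\nabla F\neq 0$ at each point of $X$, in particular at $p$ and $q$; and tangency $l_0\subset T_pX$ forces $\partial_{x_0}F(p)=\partial_{x_1}F(p)=0$ (both points of $l_0$ satisfy the equation of $T_pX$), whence $(\partial_{x_2}F(p),\partial_{x_3}F(p))\neq(0,0)$, and likewise at $q$. Thus the Jacobian has rank $2$, and by the Jacobian criterion $S$ is smooth of dimension $2$ at $[l_0]$; as $[l_0]$ was arbitrary, $S$ is a smooth surface.

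The only genuinely delicate point is the bookkeeping of the square‑cone $\widehat Y$ at the two kinds of bitangents: one must identify correctly which pair of coefficients spans the conormal directions to $\widehat Y$ (namely $\{c_0,c_4\}$ for a genuine bitangent and $\{c_0,c_1\}$ for a quadritangent) so that the tangency condition can be fed into the Jacobian. Conceptually this is the statement that the normal space to the squares at $g_0^2$ is $\mathrm{Sym}^4 W/(g_0\cdot\mathrm{Sym}^2 W)\cong H^0(\sO_D)$, where $D=\mathrm{div}(g_0)$ is the length‑two tangency divisor on $l_0$, and that the differential of $C$ is the normal derivative $v\mapsto \langle\nabla F,v\rangle|_D$; surjectivity onto $H^0(\sO_D)$ then follows because $\nabla F$ is a nonzero conormal at each point of $D$ and the normal bundle $N_{l_0/\mP^3}\cong\sO_{l_0}(1)^{\oplus 2}$ separates the (possibly infinitely near) points of $D$. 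The hypothesis that $X$ contains no line is used only to ensure $F|_{l_0}\neq 0$, so that $[l_0]$ never lands on the vertex of $\widehat Y$ and $S$ does not acquire the Fano scheme of lines of $X$ as an excess locus.
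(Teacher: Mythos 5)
Your proof is correct and is essentially the paper's own argument: both fix a bitangent $l_0=\{x_2=x_3=0\}$, use the same affine chart on $\mathbb G$, compute the Zariski tangent space as a first-order condition on the perturbed quartic, split into the honest-bitangent and quadritangent cases, and conclude rank $2$ from smoothness of $X$ at the tangency points. Your packaging via transversality to the cone of perfect squares $\widehat Y$ is just a reformulation of the paper's condition that the first-order term be divisible by $x_1(x_1-\lambda x_0)$, since $T_{g_0^2}\widehat Y=g_0\cdot\mathrm{Sym}^2W$, so the two computations coincide.
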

\begin{proof} 
We fix a line $l\subset \mP^3$ which is bitangent to $X$. W.l.o.g. we can assume that $l:=(x_2=x_3=0)$ and that the two points in $X\cap l$ are $P=(1:0:0:0)$, $P_{\lambda}=(1:\lambda:0:0)$ where we do not assume $\lambda\neq 0$ (the case $\lambda=0$ corresponds to a quadritangent line). Then 
$$
F(x_0:x_1:x_2:x_3)=x_{1}^2(x_1-\lambda x_0)^2 +x_2G(x_0:x_1:x_2:x_3)+x_3H(x_0:x_1:x_2:x_3)
$$
where $G,H\in \mathbb C[x_0,x_1,x_2,x_3]$ are homogeneous forms of degree $3$. We recall that by our generality asumption $l\not\subset X$.

We consider an open neighbourhood $U'\subset \mathbb G$ of $[l]$ and let $(u_0,u_1,u_2,u_3)$ be a regular parameterisation of $U'$ inside $\mathbb G$; this means that for points $[r]$ close to $[l]$ inside $U'$ the corresponding line $r$ in $\mP^3$ is given as follows:
$$
r=\{ (x_0:x_1: x_0u_0+x_1u_1:x_0u_2+x_1u_3) \mid (x_0:x_1)\in\mP^1\}\subset\mP^3.
$$
We look for conditions on the tangent vector $v:=(u_0 ,u_1,u_2, u_3)\in T_{[l]}\mathbb G$ to be inside the Zariski tangent space $(m_{S,[l]}/m_{S,[l]}^2)^{\vee}$ of $S$ at $[l]$. This means that if in $\mathbb C[x_0, x_1, u_0,u_1,u_2,u_3,\epsilon]$, where $\epsilon^2=0$, we write $$f(x_0:x_1; u_0,u_1,u_2,u_3,\epsilon)=F(x_0:x_1: \epsilon (x_0u_0+x_1u_1):\epsilon(x_0u_2+x_1u_3))$$ then it must exists a polynomial $q\in \mathbb C[x_0, x_1, u_0,u_1,u_2,u_3,\epsilon]$  such that $q$ has degree at most $2$ in the variables $x_0, x_1$ and $f=q^2$. Since $f(x_0 :x_1; u_0, u_1,u_2, u_3, \epsilon)=x_{1}^{2}(x_{1}-\lambda x_{0})^2 + \epsilon ( (x_0u_0+x_1 u_1)g(x_0:x_1) +( x_0u_2+x_1u_3) h(x_0:x_1))$ where $g(x_0:x_1):= G(x_0:x_1:0:0)$ and $h(x_0:x_1):= H(x_0:x_1:0:0)$ this is possible iff $x_{1} (x_{1}-\lambda x_{0})$ is a factor of  $(x_0u_0+x_1u_1)g(x_0:x_1)+(x_0u_2+x_1u_3)h(x_0:x_1)$. We distinguish now two cases: $\lambda=0$ and $\lambda\neq 0$. If  $\lambda\neq 0$ then we obtain that $v\in (m_{S,[l]}/m_{S,[l]}^2)^{\vee}$ iff

\[
\begin{cases}
u_0g(1:0)+u_2h(1:0)=0 \\
(u_0+\lambda u_1)g(1:\lambda)+(u_2+\lambda u_3)h(1:\lambda)=0
\end{cases}
\]
The above linear system has rank $\leq 1$ iff $P$ or $P_{\lambda}$ is a singular point of $X$, which never happens by assumption. If $\lambda=0$ the condition is equivalent to

\[
\begin{cases}
u_0g(1:0)+u_2h(1:0)=0 \\
u_0\frac{\partial}{\partial x_1}g(1:0)+u_1g(1:0)+u_2\frac{\partial}{\partial x_1}h(1:0)+u_3h(1:0)=0
\end{cases}
\]
and again the rank is less or equal to $1$ iff $P$ is a singular point.
\end{proof}

We recall the following:

\begin{prop}\label{liscia} The map $f\colon S_X\to S$ is a $2$-to-$1$ \'etale cover. In particular $S_X$ is a smooth surface.
\end{prop}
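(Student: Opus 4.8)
The plan is to realise $f$ as the quotient of $S_X$ by the free involution coming from the double cover, and then to identify tangent spaces so that the étale property and the smoothness of $S_X$ drop out together. Let $\iota\colon Q\to Q$ be the Galois involution of $\rho\colon Q\to\mP^3$, whose fixed locus is the ramification $R\cong X$. First I would analyse $\rho^{-1}(l)$ for $[l]\in S$: keeping the normalisation of the proof of Proposition \ref{lisciezza}, the restriction $F|_l$ is the perfect square $\big(x_1(x_1-\lambda x_0)\big)^2$, so in the chart $w=T_1/T_\infty$ the equation $w^2=F$ of $Q$ splits over $l$ as $w=\pm x_1(x_1-\lambda x_0)$; thus $\rho^{-1}(l)=l^1\cup l^2$ is the union of the two lines of $Q$ recalled from \cite{T}, each mapped isomorphically onto $l$ and interchanged by $\iota$. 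Conversely, for any line $r$ of $Q$ the condition $r\cdot\rho^\star(H_{\mP^3})=1$ forces $\rho(r)$ to be a genuine line $l\subset\mP^3$ with $\rho|_r$ birational, so $\rho^{-1}(l)$ is reducible, which occurs precisely when $F|_l$ is a perfect square, i.e. when $l$ is bitangent. Hence $f$ is surjective and everywhere $2$-to-$1$, the only lines of $Q$ over $l$ are $l^1,l^2$, and $\iota$ induces a fixed-point-free involution of $S_X$ (as $l^1\neq l^2$) with $S=S_X/\langle\iota\rangle$ and $f$ the quotient map.

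Next I would prove that $df$ is everywhere an isomorphism. The Zariski tangent space of $S_X$ at $[l^i]$ is $H^0(l^i,N_{l^i/Q})$, and adjunction with $-K_Q=\rho^\star(2H_{\mP^3})$ and $l^i\cong\mP^1$ gives $\deg N_{l^i/Q}=0$, hence $\chi(N_{l^i/Q})=\deg N_{l^i/Q}+\mathrm{rk}\,N_{l^i/Q}=2$. The differential of $f$ is induced by $\rho$: it sends a first-order deformation of $l^i$ in $Q$ to the corresponding first-order motion of $l=\rho(l^i)$ in $\mP^3$. I would construct its inverse by hand: a vector $v\in T_{[l]}S$ is, by the very computation of Proposition \ref{lisciezza}, a first-order deformation $l_\epsilon$ along which $F|_{l_\epsilon}$ remains a perfect square $q_\epsilon^2$ modulo $\epsilon^2$, and then $\{w=q_\epsilon\}$ is a canonical first-order deformation of $l^i$ in $Q$; the assignment $v\mapsto[\{w=q_\epsilon\}]$ is a two-sided inverse of $df$, so $df\colon T_{[l^i]}S_X\xrightarrow{\ \sim\ }T_{[l]}S$.

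Granting this tangent-space isomorphism, the conclusion is immediate. Since $S$ is smooth of dimension $2$ by Proposition \ref{lisciezza}, we get $\dim_{\mC}H^0(l^i,N_{l^i/Q})=2$, and comparison with $\chi(N_{l^i/Q})=2$ forces $h^1(l^i,N_{l^i/Q})=0$; therefore the Hilbert scheme $S_X$ is unobstructed and smooth of dimension $2$ at each $[l^i]$. Finally $f$ is a finite morphism of smooth surfaces whose differential is everywhere bijective, hence étale, and it is $2$-to-$1$ by the fibre analysis; in particular $S_X$ is a smooth surface.

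The hard part will be the bijectivity of $df$: that every bitangent first-order deformation of $l$ lifts \emph{uniquely} to a deformation of the chosen sheet $l^i$, and that no non-trivial deformation of $l^i$ projects trivially to $l$. The first-order square root $q_\epsilon$ is exactly the bitangency condition isolated in Proposition \ref{lisciezza}, which makes the two tangent-space computations coincide; the points demanding the most care are the quadritangent locus $\lambda=0$ and, more generally, those $[l]$ where $l^1$ and $l^2$ are tangent, where the splitting of $\rho^{-1}(l)$ degenerates and one must check, using the generality hypothesis that $X$ contains no line, that $N_{l^i/Q}$ stays of the expected numerical type.
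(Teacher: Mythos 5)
Your fibre analysis in the first paragraph is correct (the splitting $\rho^{-1}(l)=l^1\cup l^2$, the fixed-point-free involution using that $X$ contains no lines, and the fact that these are the only lines of $Q$ over $l$), as is the computation $\chi(N_{l^i/Q})=2$, and your lift $v\mapsto\{w=q_\epsilon\}$ is legitimate: the square root $q_\epsilon$ of $F|_{l_\epsilon}$ lifting $q$ is unique, the family is flat, and $df\circ(\mathrm{lift})=\mathrm{id}$, so $df$ is \emph{surjective}. But that is where the proof stops. You assert that the lift is a \emph{two-sided} inverse and then concede in your last paragraph that bijectivity of $df$ is ``the hard part'' --- that is, injectivity (no non-trivial deformation of $l^i$ projects to zero) is never proved, and this is not a detail but the entire content of the proposition. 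Surjectivity of $df$ alone only gives $h^0(l^i,N_{l^i/Q})\geq 2$, which is perfectly compatible with $N_{l^i/Q}\cong\sO_{\mP^1}(k)\oplus\sO_{\mP^1}(-k)$ for $k\geq 2$; in that case the tangent space to $S_X$ at $[l^i]$ would have dimension $k+1>2$, $S_X$ could be singular there, and $f$ would not be \'etale. Note also that this gap is present at \emph{every} point of $S_X$, not only over the quadritangent locus and the tangent-sheet locus that you single out for special care. For comparison: the paper gives no argument at all here, it simply cites \cite[Lemma 1.1 and Corollary 1.3]{W} and \cite[Propositions 2.4 and 3.1]{T}; Welters' Lemma 1.1 is exactly the statement you are missing, namely that $N_{l^i/Q}$ is $\sO\oplus\sO$ or $\sO(1)\oplus\sO(-1)$.

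The gap can be closed along your own lines, and more cheaply than by local analysis at the tangency points. Since $X$ contains no lines, $l^i\not\subset\rho^{-1}(X)$, so $d\rho$ is an isomorphism at all but the finitely many points of $l^i\cap\rho^{-1}(X)$; hence the induced map of sheaves $N_{l^i/Q}\to(\rho|_{l^i})^{\star}N_{l/\mP^3}\cong\sO_{\mP^1}(1)^{\oplus 2}$ is generically an isomorphism, and therefore injective because $N_{l^i/Q}$ is locally free on an integral curve, hence torsion-free. Taking $H^0$ gives injectivity of $df$ (its image lies in $T_{[l]}S$ because $f$ factors through $S$), and the same injection shows every line subbundle of $N_{l^i/Q}$ has degree $\leq 1$, forcing the splitting type to be $(0,0)$ or $(1,-1)$, so $h^1(N_{l^i/Q})=0$. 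Then $2=\chi(N_{l^i/Q})=h^0(N_{l^i/Q})\leq\dim T_{[l]}S=2$ by Proposition \ref{lisciezza}, the Hilbert scheme lower bound $\dim_{[l^i]}S_X\geq\chi(N_{l^i/Q})=2$ gives smoothness of $S_X$ at $[l^i]$, and $df$ is bijective everywhere; your concluding paragraph then goes through verbatim, uniformly in the degenerate cases, so the separate case analysis you anticipate is unnecessary.
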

\begin{proof} See \cite[Lemma 1.1 and Corollary 1.3, page 18]{W} or \cite[Proposition 2.4]{T}, and in \cite[Proposition 3.1]{T}. See also \cite[Remark 2.2.9]{KPS}.   
\end{proof}

\subsection{Invariants for the surface of bitangents}
We need to recall the invariants of $S$. To this aim we consider the restriction to $S$ of the universal exact sequence (\ref{univesattasequenza}) of $\mathbb G$ via the natural inclusion $j_S\colon S\hookrightarrow \mathbb G$:
\begin{equation}\label{universalonS}
0\to \sS^{\vee}_S\to V\otimes\sO_S\to \sQ_S\to 0.
\end{equation}
We denote by $\pi_S\colon \mP(\sQ_S)\to S$ the pull-back of the natural projection ${\pi_{\mathbb G}}\colon\mP(\sQ)\to \mathbb G$.

\noindent
\subsubsection{Basic diagrams} 
We consider the following natural diagram of morphisms:
\begin{equation*}\label{diagrammabase}
\xymatrix { 
&&&\\
&&&
S_X \ar[d]^-{f}  &\\
&Y\ar[r]^-{j_Y}\ar[d]^-{i_Y}&\mP(\sQ_S)\ar[r]^-{    {\pi_S}}\ar[d]^-{j_{\sQ_S}}&S \ar[d]^-{j_{S}}&\\
&\mP(\Omega^1_X(1))\ar[r]^-{j_{   \Omega^1_X   }}\ar[d]^-{\rho_X}&\ar[d]^-{\rho_\mP^3}\mP(\Omega^1_{\mP^3}(1))=\mP(\sQ)\ar[r]^-{\pi_{\mathbb G}}\ar[d]&\mathbb G
&\\
&X\ar[r]^-{j_X}&\mP^3&
}
\end{equation*}
where we have used the well-known isomorphism $\mP(\Omega^1_{\mP^3}(1))\cong \mP(\sQ)$ and where we have
defined
$$
Y:=\{(p,[l])\in \mP(\Omega^1_X(1))\mid p\in X\, {\rm{and}}\, l\, {\rm{ is\, bitangent\, to\, X \,and}} \, p\in l\cap X \}.
$$
In the above diagram (\ref{diagrammabase})  the inclusion
$j_{\Omega^1_X}\colon \mP(\Omega^1_X(1))\to \mP(\Omega^1_{\mP^3}(1))$ admits the factorization:

\begin{equation}\label{diagrammasullasuperficie}
\xymatrix{
&\mP(\Omega^1_X(1))\ar[r]^-{J_X}  \ar[d]^{\rho_X}&\mP(\Omega^1_{\mP^3|X}(1))\ar[r]^-{J_{\Omega^1_{\mP^3|X}}}  \ar[d]^{p_X}&\subset\mP(\Omega^1_{\mP^3}(1)) \ar[d]^{\rho_{\mP^3}}  &\\
&X\ar[r]^-{{\rm{id}}_X}&X\ar[r]^-{j_X}&\mP^3&
}
\end{equation}
\noindent 
where the inclusion $J_X\colon \mP(\Omega^1_X(1))\to \mP(\Omega^1_{\mP^3|X}(1))$ is given by the projectivisation of the standard conormal sequence:

\begin{equation}\label{conormal}
0\to \sO_X(-4)\to \Omega^1_{\mP^3|X}\to \Omega^1_X\to 0.
\end{equation}

Clearly the identification $\mP(\Omega^1_{\mP^3}(1))\cong \mP(\sQ)$ is given by the standard incidence variety. We have:

\begin{lem}\label{projcotangent} For the $3$-fold $\mP(\Omega^1_X(1))$ it holds:
$$\mP(\Omega^1_X(1))\cong \{(p,[l])\in X\times \mathbb G\mid l\,\, {\rm {is\,\, a\,\, tangent\,\, line\,\, to}}\,\, X\,\, {\rm{at}}\,\, P \}$$
\end{lem}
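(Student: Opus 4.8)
The plan is to realise the asserted isomorphism as the restriction to $X$ of the standard identification of the (twisted) projectivised cotangent bundle of $\mP^3$ with the point–line incidence variety, using the factorisation (\ref{diagrammasullasuperficie}) and the conormal sequence (\ref{conormal}). Since twisting by a line bundle does not alter a projectivisation, throughout I freely replace $\Omega^1_X(1)$ by $\Omega^1_X$ and $\Omega^1_{\mP^3}(1)$ by $\Omega^1_{\mP^3}$.

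First I would recall the global model. With Grothendieck's convention a point of $\mP(\Omega^1_{\mP^3}(1))$ over $p\in\mP^3$ is a one–dimensional quotient of $\Omega^1_{\mP^3,p}$, hence a one–dimensional subspace of $T_{\mP^3,p}$, i.e. a tangent direction at $p$. Through $p$ there passes a unique line of $\mP^3$ realising a prescribed tangent direction, so this produces the classical identification $\mP(\Omega^1_{\mP^3}(1))\cong\mP(\sQ)\cong\{(p,[l])\in\mP^3\times\mathbb{G}\mid p\in l\}$ already invoked in diagram (\ref{diagrammasullasuperficie}), under which the two structural projections become $(p,[l])\mapsto p$ and $(p,[l])\mapsto[l]$.

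Next I would restrict to $X$. Pulling the incidence variety back along $j_X\colon X\hookrightarrow\mP^3$ gives $\mP(\Omega^1_{\mP^3|X}(1))\cong\{(p,[l])\mid p\in X,\ p\in l\}$, whose fibre over $p$ is the whole $\mP^2$ of lines through $p$. The conormal sequence (\ref{conormal}) is a surjection $\Omega^1_{\mP^3|X}\twoheadrightarrow\Omega^1_X$, and projectivising it yields precisely the closed immersion $J_X$ of (\ref{diagrammasullasuperficie}); fibrewise, after dualising, it embeds $\mP(T_{X,p})\cong\mP^1$ into $\mP(T_{\mP^3,p})\cong\mP^2$ as the pencil of tangent directions lying in $T_{X,p}$. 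Chasing a point $(p,[v])$ of $\mP(\Omega^1_X(1))$ through $J_X$ and the global model, it is sent to $(p,[l])$, where $l$ is the line through $p$ with direction $v\in T_{X,p}$; such an $l$ lies in the embedded tangent plane $\mathbb{T}_pX$ and is therefore tangent to $X$ at $p$, and conversely every line tangent to $X$ at $p$ arises in this way from its tangent direction. Hence the composite $\mP(\Omega^1_X(1))\hookrightarrow\mP(\Omega^1_{\mP^3}(1))$ is a closed immersion of a smooth $3$–fold onto the locus $\{(p,[l])\mid p\in X,\ l\ \text{tangent to}\ X\ \text{at}\ p\}$, which thereby inherits the structure making it isomorphic to $\mP(\Omega^1_X(1))$, as $\rho_X$ recovers $p$ and the composition to $\mathbb{G}$ recovers $[l]$.

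The step requiring care — and the main conceptual, if elementary, obstacle — is the dictionary between tangent directions and tangent lines: one must verify that the pencil cut out fibrewise by the projectivised conormal map $J_X$ is exactly the family of lines through $p$ contained in $\mathbb{T}_pX$, equivalently the lines meeting $X$ at $p$ with contact multiplicity $\geq 2$, and that the bookkeeping of the Grothendieck convention (quotients versus subspaces, together with the $\sO(1)$–twist) does not disturb the identification of the two projections with $\rho_X$ and with the forgetful map to $\mathbb{G}$.
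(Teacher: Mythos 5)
Your argument is correct and is exactly the route the paper intends: the paper's own proof consists of the single line ``Trivial since $X$ is smooth,'' leaving implicit precisely the identification of $\mP(\Omega^1_{\mP^3}(1))$ with the point--line incidence variety and the projectivised conormal sequence $J_X$ that you spell out. Your write-up simply makes the ``trivial'' details explicit, including the correct handling of the Grothendieck quotient convention, so there is nothing to correct or add.
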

\begin{proof} Trivial since $X$ is smooth.
\end{proof}

\subsubsection{Useful divisor classes}

We denote by $N$, $R$, $T$ the tautological divisors on respectively $\mP(\Omega^1_{\mP^3}(1))$,
 $\mathbb P(\sQ)$, $\mP(\Omega^1_X)$; that is:
 \medskip 
\begin{equation*}\label{notazionesemplice}
\rho_{\mP^3\star}\sO_{\mP(\Omega^1_{\mP^3}(1))}(N)=\Omega^1_{\mP^3}(1),\, \pi_{\mathbb G\star} \sO_{\mP(\sQ)}(R)=\sQ,
\,\,\, \rho_{X\star}(\sO_{\mP(\Omega^1_X)}(T))=\Omega^1_X.
\end{equation*}
Since no confusion can arise, we denote by $R$ also the restriction to $\mP(\sQ_S)$ of $R$, hence we can write too: 
\begin{equation}\label{eillepullabacche}
\pi_{S\star}\sO_{\mP(\sQ_S)}(R)=\sQ_S.
\end{equation}

\begin{rem} By diagram (\ref{diagrammabase}) we can switch from divisor classes which are easily seen by the geometry over $\mP^3$ to ones which are easily seen by the geometry over $\mathbb G$ and viceversa.
\end{rem}

Letting $H_{\mP^3}$ the hyperplane section of $\mP^3$, since $R=\rho_{\mP^3}^{\star}H_{\mP^3}$ it is well-defined the restriction of $R$ to $\mP(\sQ_S)$, $\mP(\Omega^1_X(1))$,  $\mP(\Omega ^1_{\mP^3|X})$. We still denote by $R$ all these different restrictions since no confusion can arise. 
We can also restrict the pull-back $\pi^{\star}_{\mathbb G}(H_{\mathbb G})$ on $\mP(\sQ)\cong\mP(\Omega^1_{\mP^3}(1))$ to 
$\mP(\sQ_S)$, $\mP(\Omega^1_X(1))$,  $\mP(\Omega ^1_{\mP^3|X})$. We set $h:=H_{\mP^3\mid X}\in {\rm{Div}}(X)$, $H_X:= \pi^{\star}_{\mathbb G}(H_{\mathbb G})_{|\mP(\Omega^1_X(1))}\in {\rm{Div}}(\mP(\Omega^1_X(1))$.

\begin{lem}\label{relativeonX}  It holds on ${\rm{Pic}}(\mP(\Omega^1_X(1))):$
\begin{enumerate} 
\item $R\sim \rho_{X}^{\star}h$;
\item $N_{|\mP(\Omega^1_X(1))}\sim T +\rho_{X}^{\star}h$;
\item $H_X\sim T+2\rho_{X}^{\star}h$.
\end{enumerate}
\end{lem}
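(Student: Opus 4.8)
The plan is to read all three identities as relations in $\mathrm{Pic}(\mP(\Omega^1_X(1)))$ and to obtain each by restricting a relation on the ambient five-fold $\mP(\Omega^1_{\mP^3}(1))\cong\mP(\sQ)$ along the factorisation of $j_{\Omega^1_X}$ given in (\ref{diagrammasullasuperficie}). Item (1) is then immediate: we already know $R\sim\rho_{\mP^3}^\star H_{\mP^3}$ on $\mP(\sQ)$, and the bottom row of (\ref{diagrammasullasuperficie}) gives $\rho_{\mP^3}\circ J_{\Omega^1_{\mP^3|X}}\circ J_X=j_X\circ\rho_X$ as maps $\mP(\Omega^1_X(1))\to\mP^3$; pulling $H_{\mP^3}$ back along both sides and using $j_X^\star H_{\mP^3}=h$ yields $R\sim\rho_X^\star h$.

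For item (2) I would use the compatibility of tautological classes with the embeddings cut out by surjections of bundles. Twisting the conormal sequence (\ref{conormal}) by $\sO_X(1)$ produces a surjection $\Omega^1_{\mP^3|X}(1)\twoheadrightarrow\Omega^1_X(1)$ whose projectivisation is exactly $J_X$; under such an embedding the relative $\sO(1)$ of the ambient bundle restricts to that of the sub-bundle, and the ambient one is in turn the restriction of $N$ since $\mP(\Omega^1_{\mP^3|X}(1))=\mP(\Omega^1_{\mP^3}(1))\times_{\mP^3}X$. Hence $N|_{\mP(\Omega^1_X(1))}$ is the tautological divisor $T'$ characterised by $\rho_{X\star}\sO(T')=\Omega^1_X(1)$. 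Finally, since $\Omega^1_X(1)=\Omega^1_X\otimes\sO_X(h)$ and projectivisation is unaffected by twisting the bundle by a line bundle, the relative $\sO(1)$ changes by $\rho_X^\star h$, so $T'\sim T+\rho_X^\star h$, which is (2).

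Combining (1) and (2), item (3) reduces to the single universal relation
\begin{equation*}
\pi_{\mG}^\star H_{\mG}\sim N+R\qquad\text{on }\mP(\sQ)\cong\mP(\Omega^1_{\mP^3}(1)),
\end{equation*}
since restricting it gives $H_X\sim(T+\rho_X^\star h)+\rho_X^\star h=T+2\rho_X^\star h$. To prove this relation I would argue numerically: the five-fold is a $\mP^1$-bundle over $\mG$, so its Picard group is free of rank two and a class is pinned down by its degrees on two independent curves. I take the fibre $\psi$ of $\pi_{\mG}$ (a line $l$ of $\mP^3$) and a line $\ell$ in a fibre $\mP^2$ of $\rho_{\mP^3}$ (the pencil of lines through a fixed point). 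Then $R\cdot\psi=1$, $R\cdot\ell=0$, $N\cdot\ell=1$ (as $\sO(N)$ restricts to $\sO_{\mP^2}(1)$ on the fibres of $\rho_{\mP^3}$), $\pi_{\mG}^\star H_{\mG}\cdot\psi=0$, and $\pi_{\mG}^\star H_{\mG}\cdot\ell=1$ because the plane of lines through a point is linearly embedded under Pl\"ucker.

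The main obstacle is the one remaining number, $N\cdot\psi=-1$. The identification $\mP(\Omega^1_{\mP^3}(1))\cong\mP(\sQ)$ sends $(p,[l])$ to the tangent direction of $l$ at $p$, so along $\psi$ the tautological quotient $\rho_{\mP^3}^\star\Omega^1_{\mP^3}(1)\to\sO(N)$ is the restriction map $\Omega^1_{\mP^3}|_l\otimes\sO_l(1)\to\Omega^1_l\otimes\sO_l(1)$; as $\Omega^1_l\cong\sO_l(-2)$ this gives $\sO(N)|_\psi\cong\sO_l(-1)$ and hence $N\cdot\psi=-1$. With this, both $N+R$ and $\pi_{\mG}^\star H_{\mG}$ have degrees $(0,1)$ on $(\psi,\ell)$, whose intersection matrix with $(R,N)$ is unimodular, so the universal relation follows and with it (3). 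Correctly unwinding the Grothendieck and incidence conventions to recognise that tautological quotient as cotangent restriction is the only genuinely non-formal step; the rest is bookkeeping with (\ref{diagrammasullasuperficie}) and the twisting rule for projective bundles.
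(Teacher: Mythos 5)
Your proof is correct, and it necessarily differs from the paper's, because the paper contains no argument for Lemma \ref{relativeonX} at all: its proof is the single citation \cite[Formulae (1.2) p. 374]{T}, so what your write-up buys is self-containedness. Items (1) and (2) are exactly the right formal manipulations: $R=\rho_{\mP^3}^{\star}H_{\mP^3}$ restricts along $\rho_{\mP^3}\circ J_{\Omega^1_{\mP^3|X}}\circ J_X=j_X\circ\rho_X$ to $\rho_X^{\star}h$; and the twisted conormal surjection $\Omega^1_{\mP^3|X}(1)\twoheadrightarrow\Omega^1_X(1)$ realises $J_X$, so the ambient tautological class restricts to the tautological class normalised by $\Omega^1_X(1)$, which differs from $T$ (normalised by $\Omega^1_X$) by exactly $\rho_X^{\star}h$. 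For item (3), the reduction to the universal relation $\pi_{\mG}^{\star}H_{\mG}\sim N+R$ on the incidence five-fold is the right move, and your identification of $\sO(N)$ along a $\pi_{\mG}$-fibre $\psi\cong l$ with $\Omega^1_l(1)\cong\sO_l(-1)$, giving $N\cdot\psi=-1$, is indeed the one genuinely non-formal step and is carried out correctly; since ${\rm{Pic}}$ of the incidence variety is free of rank two with basis $(R,N)$ and your two test curves pair unimodularly against this basis, the numerical agreement legitimately pins down the class. As a small remark, the universal relation can also be obtained without test curves by computing the canonical class of the incidence variety along its two bundle structures (a $\mP^1$-bundle over $\mG$ and a $\mP^2$-bundle over $\mP^3$): the relative Euler sequences give $K=-3\pi_{\mG}^{\star}H_{\mG}-2R$ and $K=-3N-5R$, and comparing the two expressions yields $\pi_{\mG}^{\star}H_{\mG}=N+R$ directly, trading your geometric identification of $\sO(N)|_{\psi}$ for two determinant computations.
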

\begin{proof} See \cite[Formulae (1.2) p. 374]{T}.
\end{proof}

Inside $S$ there is the subscheme $B_{\rm{hf}}\hookrightarrow S$ which parametrises the hyperflex lines. In \cite[p. 18]{W} it is shown that $B_{\rm{hf}}$ is a smooth curve if $X$ is general. Actually it holds much more:
\begin{prop}\label{ilrivestimento doppio ramificato}
The non trivial $2$-torsion element $\sigma\in {\rm{Pic}}(S)$ associated to the covering $f\colon S_X\to S$ is such that $Y$ can be realised as a subscheme of $\mP(\sO_S\oplus\sO_S(\sigma+H_{\mathbb G|S}))$. The restriction of the natural projection $\mP(\sO_S\oplus\sO_S(\sigma+H_{|S}))\to S$ induces a $2$-to-$1$ cover $\pi\colon Y\to S$ branched over $B_{\rm{hf}}\in |2H_{\mathbb G|S}|$. In particular $Y$ is a smooth surface. Moreover as a divisor in
$\mP(\sQ_S)$ we have that $Y\in |2R+\pi_S^{\star}(\sigma)|$.
\end{prop}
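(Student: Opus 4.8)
The plan is to realise $Y$ as a relative degree-two divisor (a bisection) of the $\mP^1$-bundle $\pi_S\colon\mP(\sQ_S)\to S$ and to extract every assertion from the equation cutting it out. First I would restrict the quartic $F\in H^0(\mP^3,\sO_{\mP^3}(4))$ to the lines of $\mathbb G$: the universal quotient $V\otimes\sO_S\to\sQ_S$ induces $\mathrm{Sym}^4(V\otimes\sO_S)\to\mathrm{Sym}^4\sQ_S$, and the image $\bar F\in H^0(S,\mathrm{Sym}^4\sQ_S)$ of $F$ has, at $[l]\in S$, value the binary quartic $F|_l\in\mathrm{Sym}^4\sQ_{S,[l]}=H^0(l,\sO_l(4))$. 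By the very definition of $S$ in (\ref{superficiebitangenti}), for every $[l]\in S$ this quartic is a perfect square $F|_l=B_{[l]}^2$ with $B_{[l]}\in\mathrm{Sym}^2\sQ_{S,[l]}$, and, under the identification of diagram (\ref{diagrammabase}), $Y$ is fibrewise the zero scheme $V(B_{[l]})\subset\mP(\sQ_{S,[l]})=l$, i.e.\ the two tangency points. Since $\pi_{S\star}\sO_{\mP(\sQ_S)}(mR)=\mathrm{Sym}^m\sQ_S$, exhibiting $Y$ as a member of $|2R+\pi_S^\star D|$ is the same as globalising the fibrewise square roots $B_{[l]}$ to a section of $\mathrm{Sym}^2\sQ_S\otimes\sO_S(D)$.

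The heart of the matter, and the step I expect to be the main obstacle, is to identify $D$ with $\sigma$. Extracting $B=\sqrt{\bar F}$ locally on $S$ gives a section of $\mathrm{Sym}^2\sQ_S$ that is well defined only up to sign; the resulting $\pm1$-valued transition cocycle defines a class $\tau\in\mathrm{Pic}(S)[2]$, and $B$ globalises to a section of $\mathrm{Sym}^2\sQ_S\otimes\sO_S(\tau)$. To see that $\tau=\sigma$ I would use the model $Q=\{T_1^2-F\,T_\infty^2=0\}$: over a bitangent line it reads $T_1^2-B_{[l]}^2T_\infty^2=(T_1-B_{[l]}T_\infty)(T_1+B_{[l]}T_\infty)$, so $\rho^\star l=l^1\cup l^2$ splits into the two components interchanged by $B_{[l]}\mapsto-B_{[l]}$. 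Hence the $\mathbb Z/2$-torsor ``a global choice of sign of $\sqrt{\bar F}$'' coincides with the torsor ``a choice of component of $\rho^\star l$'', which by construction is the \'etale double cover $f\colon S_X\to S$ of Proposition \ref{liscia}; thus $\tau=\sigma$. This gives $B\in H^0(S,\mathrm{Sym}^2\sQ_S\otimes\sO_S(\sigma))=H^0(\mP(\sQ_S),\sO(2R)\otimes\pi_S^\star\sO_S(\sigma))$, whence the final assertion $Y\in|2R+\pi_S^\star(\sigma)|$.

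Next I would analyse $\pi=\pi_S\circ j_Y\colon Y\to S$. Its fibre over $[l]$ is the set of tangency points of $l$, two points unless $l$ is a hyperflex (the case $\lambda=0$ in Proposition \ref{lisciezza}), so $\pi$ is $2:1$ and ramifies exactly where the binary quadratic $B_{[l]}$ acquires a double root, i.e.\ on the discriminant of $B$. Viewing $B\in H^0(S,\mathrm{Sym}^2\sQ_S\otimes\sO_S(\sigma))$ as a symmetric morphism $\sQ_S^\vee\to\sQ_S\otimes\sO_S(\sigma)$, its determinant is a section of $(\bigwedge^2\sQ_S)^{\otimes2}\otimes\sO_S(2\sigma)$; using $\bigwedge^2\sQ_S=\sO_S(H_{\mathbb G|S})$ from Lemma \ref{ARRONDOondo}(1) and $2\sigma\sim0$, this sheaf is $\sO_S(2H_{\mathbb G|S})$, so the branch curve is $B_{\rm{hf}}\in|2H_{\mathbb G|S}|$, as claimed.

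Finally, for the embedding into $\mP(\sO_S\oplus\sO_S(\sigma+H_{\mathbb G|S}))$ and smoothness I would compute $\pi_\star\sO_Y$ from the structure sequence $0\to\sO(-2R)\otimes\pi_S^\star\sO_S(-\sigma)\to\sO\to\sO_Y\to0$ on $\mP(\sQ_S)$. Relative Serre duality on the $\mP^1$-bundle, with $\omega_{\mP(\sQ_S)/S}=\sO(-2R)\otimes\pi_S^\star\bigwedge^2\sQ_S$, yields $R^1\pi_{S\star}\sO(-2R)=(\bigwedge^2\sQ_S)^{-1}$, so that $\pi_\star\sO_Y=\sO_S\oplus\sO_S(-\sigma-H_{\mathbb G|S})$. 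Hence $Y=\mathrm{Spec}_S(\pi_\star\sO_Y)$ is exactly the double cover realised inside $\mP(\sO_S\oplus\sO_S(\sigma+H_{\mathbb G|S}))$, and $(\sigma+H_{\mathbb G|S})^{\otimes2}=\sO_S(2H_{\mathbb G|S})=\sO_S(B_{\rm{hf}})$ is consistent with the branch data found above. Smoothness of $Y$ is then immediate: a double cover of the smooth surface $S$ branched along a smooth divisor is smooth, and $B_{\rm{hf}}$ is a smooth curve for general $X$ by Welters \cite[p.\ 18]{W}.
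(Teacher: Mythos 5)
Your proposal cannot be measured against an argument in the paper, because the paper gives none: its entire proof of Proposition \ref{ilrivestimento doppio ramificato} is the citation ``See \cite[Proposition 3.11]{W}''. What you have written is a self-contained reconstruction of what is essentially Welters' argument, and it is correct in substance. Its real content is the identification $\tau=\sigma$: you globalise the fibrewise square roots of $F|_l$ into a section $B$ of $\mathrm{Sym}^2\sQ_S\otimes\sO_S(\tau)$ for some $2$-torsion class $\tau$, and then use the factorisation $T_1^2-FT_\infty^2=(T_1-B_{[l]}T_\infty)(T_1+B_{[l]}T_\infty)$ on the double solid to identify the torsor of signs of $\sqrt{F|_l}$ with the torsor of components of $\rho^\star l$, i.e.\ with $f\colon S_X\to S$. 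This is exactly the mechanism that makes $\sigma$ appear in the tangent bundle sequence (\ref{simsigma}) (whose first map is precisely your section $B$), so your route has the merit of making the paper independent of \cite{W} here and of explaining where $\sigma$ comes from; the remaining steps (discriminant computation for the branch class, relative duality for $\pi_\star\sO_Y$ and the realisation inside $\mP(\sO_S\oplus\sO_S(\sigma+H_{\mathbb G|S}))$) are routine and correctly executed.

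One point needs to be made explicit, as it is where your argument could genuinely leak. Your conclusions that $B_{\rm hf}\in|2H_{\mathbb G|S}|$ and that $Y$ is smooth both assume that the zero \emph{scheme} of $\mathrm{disc}(B)$ coincides with $B_{\rm hf}$, in particular that it is reduced: if $\mathrm{disc}(B)$ vanished to even order along the hyperflex curve, the branch divisor would be non-reduced, $Y$ would be non-normal along a curve, and the final deduction ``smooth branch divisor $\Rightarrow$ smooth $Y$'' would be vacuous. This is repaired by taking on $B_{\rm hf}$ its natural (discriminant) scheme structure — which is what the paper implicitly does — so that the smoothness of $B_{\rm hf}$ quoted from \cite[p.~18]{W} forces reducedness of the branch divisor and hence smoothness of $Y$; the same remark disposes of the analogous implicit identification $Y=Z(B)$ as schemes (here reducedness of $Z(B)$ follows since not every bitangent is a hyperflex, so $Z(B)$ has no multiple component). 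With these two sentences added, your proof is complete.
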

\begin{proof} See \cite[Proposition 3.11]{W}.
\end{proof}

By the diagram in (\ref{diagrammabase}) we obviously obtain the following one:

\begin{equation}
\label{oneuniversal&projection}
\xymatrix{ & Y \ar[dl]_{{ \rho}}
\ar[dr]^{\pi}\\
 X\subset\mP^{3} &  & S\subset\mathbb G }
\end{equation}
%The fact that $Y$ is a divisor both in $\mP(\Omega^1_X(1))$ than in $\mP(\sQ_S)$ makes possible to link the geometry of $X$ to the geometry of $S$ via the one of $Y$. 
We recall that since $X$ is general ${\rm{Pic}}(X)=[h]\mathbb Z$, hence 
\begin{equation}\label{picdi}
{\rm{Pic}}(\mP(\Omega^1_X(1)))=[T]\mathbb Z\oplus [R]\mathbb Z.
\end{equation}

\begin{prop}\label{sopraX}
As a divisor in  ${\rm{Pic}}(\mP(\Omega^1_X(1)))$ it holds that $$Y\in |6T+8R|.$$
\end{prop}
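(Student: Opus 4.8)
The plan is to realise $Y$ as a discriminant divisor inside the threefold $W:=\mP(\Omega^1_X(1))$ and to read off its class from $\mathrm{Pic}(W)=[T]\mZ\oplus[R]\mZ$ of (\ref{picdi}). Let $\eta\colon\sL\to W$ be the universal line over $W$, together with its tautological map $\mu\colon\sL\to\mP^3$, so that the fibre of $\sL$ over $(p,[l])$ is the line $l$ and $\mu^\star\sO_{\mP^3}(1)$ restricts to $\sO_l(1)$ on it. Restricting the quartic form $F$ to these lines produces a section $s_F:=\eta_\star(\mu^\star F)\in H^0(W,\mathrm{Sym}^4\sW)$, where $\sW:=\eta_\star\mu^\star\sO_{\mP^3}(1)$ is a rank-$2$ bundle and the value of $s_F$ at $(p,[l])$ is $F|_l\in H^0(l,\sO_l(4))$. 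Since every point of $W$ records a line $l$ that is tangent to $X$ at $p$, the section $s_F$ vanishes to order two along the marked point $p$ on each fibre; dividing out this double zero leaves a residual conic on $l$, and $(p,[l])\in Y$ precisely when that conic is a perfect square. I would therefore exhibit $Y$ as the zero locus of the fibrewise discriminant of the residual conic and compute the line bundle in which that discriminant lives.

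First I would pin down the relevant line bundles. Writing $g\colon W\to\mathbb{G}$ for the morphism $(p,[l])\mapsto[l]$, the universal line is pulled back along $g$ from the incidence variety $\mP(\sQ)$; since $\pi_{\mathbb{G}\star}\sO_{\mP(\sQ)}(R)=\sQ$ and $\sO_{\mP^3}(1)$ restricts to the tautological $R$, base change gives $\sW=g^\star\sQ$, whence by Lemma \ref{ARRONDOondo}(1) and Lemma \ref{relativeonX}(3)
$$\det\sW=g^\star\textstyle\bigwedge^2\sQ=g^\star\sO_{\mathbb{G}}(1)=H_X=T+2R.$$
Evaluation at the marked point $p$ gives a surjection $\sW\twoheadrightarrow\mathcal{A}$, where $\mathcal{A}=\rho_X^\star\sO_X(h)=\sO_W(R)$ by Lemma \ref{relativeonX}(1); its kernel $\sB=\ker(\sW\to\mathcal{A})$ is the sub-line-bundle generated fibrewise by the linear form $\ell_p$ vanishing at $p$, and from $0\to\sB\to\sW\to\mathcal{A}\to0$ one obtains
$$\sB=\det\sW\otimes\mathcal{A}^{-1}=\sO_W(T+R).$$

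Now the double zero of $s_F$ at $p$ is the image of $\ell_p^2$, a section of $\mathrm{Sym}^2\sW\otimes\sB^{-2}$ (the inclusion $\sB\hookrightarrow\sW$ being a section of $\sW\otimes\sB^{-1}$); writing $s_F=\ell_p^2\cdot Q$ thus exhibits the residual conic as a section $Q\in H^0(W,\mathrm{Sym}^2\sW\otimes\sB^{2})$. The locus where a binary quadratic form degenerates is cut out by its discriminant, which for a conic in a rank-$2$ bundle is a fibrewise quadratic map $\mathrm{disc}\colon\mathrm{Sym}^2\sW\to(\det\sW)^{\otimes2}$; applied to $Q$ it yields a section
$$\mathrm{disc}(Q)\in H^0\!\big(W,(\det\sW)^{\otimes2}\otimes\sB^{\otimes4}\big)$$
whose zero scheme is $Y$. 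Taking first Chern classes gives
$$[Y]=2\det\sW+4\,c_1(\sB)=2(T+2R)+4(T+R)=6T+8R,$$
as claimed. The two points needing genuine care are the twist bookkeeping of the middle step—above all fixing $\sB=\sO_W(T+R)$, for which deducing it from $\det\sW$ and $\mathcal{A}$ is far safer than dualising the tautological quotient of $\mP(\Omega^1_X(1))$ directly, a computation in which an erroneous factor of $R$ is easy to introduce—and verifying that $\mathrm{disc}(Q)$ vanishes along $Y$ with multiplicity one. The latter holds because at a general bitangent the two tangency points are distinct, so the residual conic has a reduced double point and its discriminant vanishes to first order; the smoothness of $Y$ from Proposition \ref{ilrivestimento doppio ramificato} rules out any excess component, so $6T+8R$ is the reduced class of $Y$.
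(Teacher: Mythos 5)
Your construction is a genuinely self-contained proof of a statement for which the paper itself offers no argument at all: its ``proof'' is just the citation \cite[Proposition 2.3]{T}. The line-bundle bookkeeping at the heart of your argument is correct: $\sW=g^\star\sQ$ has $\det\sW=\sO_W(T+2R)$ by Lemma \ref{ARRONDOondo}(1) and Lemma \ref{relativeonX}(3); the evaluation map $\sW\to\mathcal{A}$ is precisely the tautological quotient of $\mP(\sQ)$ restricted to $W$, so $\mathcal{A}=\sO_W(R)$ consistently with Lemma \ref{relativeonX}(1), whence $\sB=\sO_W(T+R)$; and the discriminant of the residual conic is a section of $(\det\sW)^{\otimes 2}\otimes\sB^{\otimes 4}=\sO_W(6T+8R)$. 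A reassuring cross-check you could add: with $K_W=-2T$ (used later in the proof of Proposition \ref{calcolo}), adjunction gives $K_Y=(K_W+Y)|_Y=(4T+8R)|_Y=4H_X|_Y$, matching Theorem \ref{formulae}(3). One step you elide, but which is routine, is why the fibrewise divisibility of $s_F$ by $\ell_p^2$ globalizes: the multiplication map $\mathrm{Sym}^2\sW\otimes\sB^{\otimes 2}\to\mathrm{Sym}^4\sW$ is a subbundle inclusion with locally free cokernel (the $1$-jets along the divisor $2p$), so a section lying fibrewise in its image lifts to a global section $Q$.

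The genuine gap is the last step, the multiplicity. What your construction proves is that $Z(\mathrm{disc}(Q))$ is an effective divisor in $|6T+8R|$ with support exactly $Y$; this only gives $mY\sim 6T+8R$ for some integer $m\geq 1$, and $m=2$ is not excluded by divisibility, since $6T+8R=2(3T+4R)$ in $\Pic(W)$. Neither of your two justifications closes this. There are no ``excess components'' to rule out -- the set-theoretic equality already forbids them -- and smoothness of the reduced surface $Y$ says nothing about the order of vanishing of the section $\mathrm{disc}(Q)$ along $Y$: a section of a line bundle can perfectly well vanish to second order along a smooth surface. Likewise, ``the residual conic has a reduced double point'' is a statement about a single fibre; what is needed is that the family of conics $Q$ meets the discriminant hypersurface transversally at a general point of $Y$, i.e.\ that $d(\mathrm{disc}(Q))\neq 0$ there. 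This can be closed by a short local computation in the spirit of the proof of Proposition \ref{lisciezza}: write $F=x_1^2(x_1-x_0)^2+x_2G+x_3H$ for an ordinary bitangent $l_0$ with tangency points $p\neq q$, restrict $\mathrm{disc}(Q)$ to the fibre $\rho_X^{-1}(p)\simeq\mP^1$ of tangent lines at $p$, and compute the derivative at $[l_0]$; it comes out to be a nonzero multiple of $G(p)H(q)-G(q)H(p)$, which is nonzero exactly when the tangent planes to $X$ at $p$ and $q$ are distinct. Since bitangent lines whose two tangent planes coincide arise from bitangent planes of $X$, which form at most a one-dimensional family, this holds at a general point of (each component of) $Y$, giving $m=1$. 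Alternatively, inside this paper one could note that $m=2$ would force $K_Y=(T+4R)|_Y$ by adjunction, contradicting Theorem \ref{formulae}(3). With either argument supplied, your proof is complete.
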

\begin{proof} See \cite[Proposition 2.3]{T}.
\end{proof}
\begin{rem}\label{swiTph} The surface $Y$ dominates both $X$ and $S$. Hence we can switch from divisor classes on $X$ to ones on $S$ and viceversa via divisors on $Y$.
\end{rem}

\subsubsection{The class of $S$ in the Chow ring of $\mathbb G$}
It is quite natural to introduce the following classes inside the Chow ring $\oplus_{i=1}^{4}{\rm{CH}}^{i}(\mathbb G)$ associated to the fundamental ladder, point, line, plane, $p\in l\subset h\subset \mP^3$:
${\rm{CH}}^1(\mathbb G)\ni\sigma_{l}:=\{[m]\in\mathbb G| m\cap l\neq \emptyset \}$, 
${\rm{CH}}^2(\mathbb G)\ni\sigma_{p}:=\{[l]\in\mathbb G| p\in l\}$, ${\rm{CH}}^2(\mathbb G)\ni \sigma_{h}:=\{[l]\in \mathbb G| l\subset h\}$. It is well known that 
$$\sigma_l^2=\sigma_h+\sigma_p$$
and that the divisorial class of $\sigma_l$ is the class $H_{\mathbb G}$.
\begin{lem}\label{chowclass} The following identity holds in the Chow ring of $\mathbb G$:
$$
{\rm{CH}}^2(\mathbb G)\ni[S]=28\sigma_h+12\sigma_p.
$$
In particular  $\rm{deg}(S)=H_{\mathbb G}^2\cdot [S]=40.$
\end{lem}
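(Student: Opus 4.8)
The plan is to expand the codimension-two class as $[S] = a\,\sigma_h + b\,\sigma_p$ in $\mathrm{CH}^2(\mathbb{G})$ and to pin down $a$ and $b$ enumeratively, after which the degree drops out of the identity $H_{\mathbb{G}}^2 = \sigma_l^2 = \sigma_h + \sigma_p$. The intersection pairing on $\mathrm{CH}^2(\mathbb{G})$ makes $\{\sigma_h,\sigma_p\}$ an orthonormal basis, with $\sigma_h^2 = \sigma_p^2 = [\mathrm{pt}]$ and $\sigma_h\cdot\sigma_p = 0$; hence $a = [S]\cdot\sigma_h$ and $b = [S]\cdot\sigma_p$. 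Concretely, $a$ is the number of bitangent lines of $X$ contained in a general plane $h\subset\mathbb{P}^3$, and $b$ is the number of bitangent lines of $X$ through a general point $p\in\mathbb{P}^3$; since $S$ is smooth and, for general $h$ and $p$, these incidences are transverse, the two counts give the coefficients with multiplicity one.

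For the coefficient $a$, a general plane $h$ meets $X$ in a smooth plane quartic $C = X\cap h$. For $P\in C$ one has $T_P C = T_P X\cap h$, so a line $l\subset h$ is tangent to $X$ at a point of $l\cap X = l\cap C$ if and only if it is tangent to $C$ there; thus the bitangents of $X$ lying in $h$ are exactly the bitangents of the plane quartic $C$. By the classical Pl\"ucker theory a smooth plane quartic has precisely $28$ bitangents, giving $a = 28$.

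For the coefficient $b$ I would project $X$ from a general point $p\notin X$, obtaining a degree-$4$ morphism $\pi\colon X\to\mathbb{P}^2$ whose ramification curve is the complete intersection $R = X\cap X_p$, with $X_p$ the first polar of $X$ at $p$ (of degree $3$). The bitangent lines through $p$ are the lines through $p$ tangent to $X$ at two distinct points, i.e. the nodes of the branch curve $B = \pi(R)\subset\mathbb{P}^2$, whereas the lines through $p$ with contact of order three (the flex lines) produce the cusps of $B$. I would then compute: $R$ is a smooth $(4,3)$ complete intersection, so adjunction gives $K_R = \mathcal{O}_R(3)$ and $g(R) = 19$; the curve $B$ has degree $12$ and hence arithmetic genus $p_a(B) = \binom{11}{2} = 55$; and the cusps of $B$ correspond to the points of $X\cap X_p\cap X_{pp}$, where $X_{pp}$ is the second polar (degree $2$), so their number is $4\cdot 3\cdot 2 = 24$. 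Since $\pi|_R\colon R\to B$ is the normalization and, for general $p$, $B$ has only ordinary nodes and cusps, the relation $p_a(B) - g(R) = \#\{\text{nodes}\} + \#\{\text{cusps}\}$ gives $\#\{\text{nodes}\} = 55 - 19 - 24 = 12$, so $b = 12$.

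Putting the pieces together, $[S] = 28\,\sigma_h + 12\,\sigma_p$, and then $\deg(S) = H_{\mathbb{G}}^2\cdot[S] = (\sigma_h+\sigma_p)\cdot(28\,\sigma_h + 12\,\sigma_p) = 28 + 12 = 40$. The delicate part is the count of $b$: one must check, using the generality of $X$ (smooth and containing no line), that for a general $p$ the intersection $R = X\cap X_p$ is smooth, that $\pi|_R$ is birational onto $B$ (equivalently, a general tangent line through $p$ is tangent at a single point), and that no hyperflex or higher-order contact lines pass through $p$, so that $B$ has only ordinary nodes and cusps and the node--cusp bookkeeping is valid.
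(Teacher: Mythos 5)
Your proposal is correct, but it proceeds quite differently from the paper: the paper proves this lemma purely by citation, referring to Welters' thesis \cite[Lemma 3.30]{W}, where the class of $S$ is computed as part of a systematic cohomological study of the bitangent surface. You instead give a self-contained classical enumeration: you use that $\{\sigma_h,\sigma_p\}$ is an orthonormal basis for the intersection pairing on ${\rm{CH}}^2(\mathbb G)$, so the coefficients are the two incidence counts $a=[S]\cdot\sigma_h$ (bitangents lying in a general plane) and $b=[S]\cdot\sigma_p$ (bitangents through a general point). The count $a=28$ via the Pl\"ucker count of bitangents of a smooth plane quartic is clean and correct, as is the count $b=12$ via projection from a general point: the ramification curve $R=X\cap X_p$ is a smooth $(4,3)$ complete intersection of genus $19$, the branch curve $B\subset\mP^2$ has degree $12$ and arithmetic genus $55$, the cusps are the $4\cdot 3\cdot 2=24$ points of $X\cap X_p\cap X_{pp}$, and the node count $55-19-24=12$ gives the bitangents through $p$. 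The transversality of the intersections $S\cap\sigma_h$ and $S\cap\sigma_p$ for general $h$, $p$ follows from Kleiman's theorem applied to the $\PGL(4)$-action on $\mathbb G$, which legitimately justifies reading the coefficients off as naive counts. What your route buys is transparency and independence from \cite{W}; what it costs is exactly the list of general-position verifications you flag at the end (smoothness of $R$, birationality of $R\to B$, only nodes and cusps on $B$), which are classical but would need to be carried out to make the argument complete, whereas the citation to Welters packages all of this. Note also that the consistency check is internal to the paper: in Lemma \ref{finitezza} the authors use precisely $[S]\cdot\sigma_p=12$ as the degree of a finite morphism, which matches your count of bitangents through a general point.
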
 
\begin{proof}
See \cite[Lemma 3.30]{W}.
\end{proof}

\subsubsection{Numerical invariants}

\begin{thm}\label{formulae} For the surfaces $S_X$, $S$, $Y$ we have the formulae:
\begin{enumerate}
\item $K_{S_X}=f^*(3H_{\mathbb G|S})$, $q(S_X)=10$, $p_g(S_X)=101$, $h^1(S_X,\Omega^1_{S_X}) =220$, $c_2(S_{X})=384$,
\item  $K_{S}=3H_{\mathbb G|S}+\sigma$, $q(S)=0$, $p_g(S)=45$, $h^1(S,\Omega^1_{S}) =100$, $c_2(S)=192$
\item $K_Y=\pi^{\star}(4H_{\mathbb G|S})$, $q(Y)=0$, $p_g(Y)=171$
\end{enumerate}
\end{thm}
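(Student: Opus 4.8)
The plan is to read off every entry from three inputs: one adjunction computation on the projective bundle $\mP(\Omega^1_X(1))$, the two covers $f\colon S_X\to S$ and $\pi\colon Y\to S$, and the Chow data of Lemma~\ref{chowclass}; the only genuinely non-formal ingredient will be the irregularity of $S_X$. For the canonical classes, note first that a smooth quartic is a K3 surface, so $K_X=0$ and $\det\Omega^1_X=\sO_X$; with $T$ normalised by $\rho_{X\star}\sO(T)=\Omega^1_X$ the relative canonical bundle formula gives $K_{\mP(\Omega^1_X(1))}=-2T$. Combining this with $Y\in|6T+8R|$ (Proposition~\ref{sopraX}), $R\sim\rho_X^\star h$ and $H_X\sim T+2R$ (Lemma~\ref{relativeonX}) yields $K_Y=(K_{\mP(\Omega^1_X(1))}+Y)|_Y=(4T+8R)|_Y=4H_X|_Y$; since $Y\to\mathbb{G}$ factors through $\pi$, one has $H_X|_Y=\pi^\star H_{\mathbb{G}|S}$ and hence $K_Y=\pi^\star(4H_{\mathbb{G}|S})$. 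By Proposition~\ref{ilrivestimento doppio ramificato} the double cover $\pi$ is defined by $L=\sigma+H_{\mathbb{G}|S}$ with branch $B_{\mathrm{hf}}\in|2H_{\mathbb{G}|S}|$, so $K_Y=\pi^\star(K_S+L)$; injectivity of $\pi^\star$ together with $2\sigma=0$ forces $K_S=3H_{\mathbb{G}|S}+\sigma$. Finally $f^\star\sigma=\sO_{S_X}$, because $\sigma$ is exactly the $2$-torsion class defining the étale cover $f$, whence $K_{S_X}=f^\star K_S=f^\star(3H_{\mathbb{G}|S})$.

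The irregularities are where the real content sits. I would obtain $q(S_X)=10$ from the Abel--Jacobi identification $\mathrm{Alb}(S_X)\cong J(Q)$ of \cite{T},\cite{W}, so that $q(S_X)=h^{1,2}(Q)=10$ by Lemma~\ref{invariantideltrifoglio}. For $q(S)=0$ I use $H^0(S,\Omega^1_S)=H^0(S_X,\Omega^1_{S_X})^\tau$ for the deck involution $\tau$ of $f$: these $1$-forms pull back from $J(Q)$, and since the covering involution of $Q\to\mP^3$ acts as $-1$ on $H^3(Q)$, $\tau$ acts as $-1$ on $H^0(S_X,\Omega^1_{S_X})$, so the invariant part vanishes. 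Regularity of $Y$ is then formal: from $\pi_\star\sO_Y=\sO_S\oplus\sO_S(-\sigma-H_{\mathbb{G}|S})$ we get $H^1(Y,\sO_Y)=H^1(S,\sO_S)\oplus H^1(S,-\sigma-H_{\mathbb{G}|S})$, the first summand vanishing by $q(S)=0$ and the second, Serre-dual to $H^1(S,K_S+(H_{\mathbb{G}|S}-\sigma))=H^1(S,4H_{\mathbb{G}|S})$, vanishing by Kodaira.

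The remaining numbers are intersection-theoretic bookkeeping. From $0\to T_S\to T_\mathbb{G}|_S\to N_{S/\mathbb{G}}\to 0$ and $K_\mathbb{G}=-4H_\mathbb{G}$ one gets $c_1(N_{S/\mathbb{G}})=K_S-K_\mathbb{G}|_S=7H_{\mathbb{G}|S}+\sigma$, while $\int_S c_2(N_{S/\mathbb{G}})=[S]^2=(28\sigma_h+12\sigma_p)^2=928$ by Lemma~\ref{chowclass}. Using $T_\mathbb{G}=\sS\otimes\sQ$ and Schubert calculus, $c_1(T_\mathbb{G})=4H_\mathbb{G}$ and $c_2(T_\mathbb{G})=7\sigma_h+7\sigma_p$; integrating $c_2(T_S)=c_2(T_\mathbb{G})-c_1(T_\mathbb{G})c_1(N)+c_1(N)^2-c_2(N)$ against $[S]$, with all torsion terms numerically zero, gives $c_2(S)=280-1120+1960-928=192$, hence $c_2(S_X)=2c_2(S)=384$ ($f$ étale) and $c_2(Y)=2c_2(S)-e(B_{\mathrm{hf}})=384+400=784$ with $e(B_{\mathrm{hf}})=-B_{\mathrm{hf}}\cdot(B_{\mathrm{hf}}+K_S)=-400$. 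Noether's formula $\chi(\sO)=\tfrac{1}{12}(K^2+c_2)$ with $\chi(\sO)=1-q+p_g$ and $H_{\mathbb{G}|S}^2=\deg S=40$ then finishes the holomorphic numbers: $K_S^2=360$ gives $p_g(S)=45$, $K_{S_X}^2=720$ gives $p_g(S_X)=101$, and $K_Y^2=1280$ gives $p_g(Y)=171$; while $c_2=2-2b_1+b_2$ with $b_2=2p_g+h^{1,1}$ yields $h^{1,1}(S)=100$ (since $b_1=0$) and $h^{1,1}(S_X)=220$ (since $b_1=2q=20$).

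The hard part is entirely the second paragraph. All the divisor-class and Chern-number manipulations are automatic once one has $K_S=3H_{\mathbb{G}|S}+\sigma$ and $[S]=28\sigma_h+12\sigma_p$, but the two Hodge-theoretic facts $q(S_X)=h^{1,2}(Q)=10$ and $q(S)=0$ require the deep analysis of the intermediate Jacobian of the double solid and of the $(-1)$-action of the covering involution; these are precisely the inputs furnished by \cite{T} and \cite{W}, and without them the numerical assembly cannot begin.
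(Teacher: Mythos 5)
Your proposal is correct, and it takes a genuinely different route from the paper: the paper's entire proof of Theorem \ref{formulae} is the citation ``See \cite[Cohomological study pp. 41--45]{W}'', i.e.\ all invariants are imported wholesale from Welters' thesis. You instead reconstruct them from the structural facts the paper quotes elsewhere (Lemma \ref{invariantideltrifoglio}, Lemma \ref{relativeonX}, Proposition \ref{ilrivestimento doppio ramificato}, Proposition \ref{sopraX}, Lemma \ref{chowclass}, and ${\rm{Alb}}(S_X)\cong J(Q)$ from \cite[Theorem 4.1]{W}, already used in Theorem \ref{collino}), via adjunction on $\mP(\Omega^1_X(1))$, double-cover formulas, the self-intersection formula $[S]^2=\int_S c_2(N_{S/\mathbb G})$, and Noether's formula. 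I checked the numerics: $[S]^2=928$, $c_2(T_{\mathbb G})=7\sigma_h+7\sigma_p$, $c_2(S)=280-1120+1960-928=192$, $K_S^2=360$, $\chi(\sO_S)=46$, $\chi(\sO_{S_X})=92$, $e(Y)=784$, $\chi(\sO_Y)=172$, and the $h^{1,1}$ bookkeeping all agree with the statement. Your route makes the theorem self-contained relative to the rest of the paper, isolating exactly which inputs are transcendental ($ {\rm{Alb}}(S_X)\cong J(Q)$, $h^{1,2}(Q)=10$, and the $(-1)$-action of the covering involution); the paper's citation buys brevity and immunity from convention errors. Two of your steps are asserted rather than argued and deserve a line each. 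First, injectivity of $\pi^{\star}$ on $\Pic(S)$ for the \emph{branched} cover $\pi$: this is true because $\pi_{\star}\pi^{\star}M\cong M\oplus M(-L)$ with $L=\sigma+H_{\mathbb G|S}$, so by Krull--Schmidt $\pi^{\star}M\cong\sO_Y$ forces either $M\cong\sO_S$ or $M\cong\sO_S(-L)$ together with $M(-L)\cong\sO_S$, and the latter gives $2L=0$, contradicting $2L\sim B_{\rm{hf}}\neq 0$. Alternatively, you can bypass the cover entirely: taking determinants in the tangent-bundle sequence (\ref{simsigma}) and using Lemma \ref{detdelsim2} gives $K_S=\det\bigl({\rm{Sym}}^2(\sQ_S)\otimes\sO_S(\sigma)\bigr)=3H_{\mathbb G|S}+3\sigma=3H_{\mathbb G|S}+\sigma$ directly. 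Second, the claim that $\tau$ acts as $-1$ on $H^0(S_X,\Omega^1_{S_X})$ needs (a) $H^3(Q)^{\iota}=\rho^{\star}H^3(\mP^3)=0$, so $\iota=-1$ on $H^3(Q)$, and (b) equivariance of the Abel--Jacobi map up to translation, so that $\tau^{\star}=-1$ on the pulled-back $1$-forms, which span all of $H^0(S_X,\Omega^1_{S_X})$ by the Albanese property; both are standard but should be said.
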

\begin{proof} See \cite[Cohomological study pp. 41-45]{W}.
\end{proof}

\subsection{The Abel-Jacobi morphism}

For completeness, below, we point out a property of  the Albanese morphism $a_X\colon S_X\to {\rm{Alb}}(S_X)$. Indeed, in the sequel we will only use that $S_X$ does not contain any curve birational to $\mP^1$ and this easily follows by \cite[Proposition 2.13 p. 27]{W}.
 
\begin{thm}\label{collino} If $X$ is general then the differential of the Albanese morphism $a_X\colon S_X\to {\rm{Alb}}(S_X)$ at any point $[l]\in S_X$ is injective.
 In particular $S_X$ does not contain any rational curve.
\end{thm}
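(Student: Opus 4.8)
The plan is to deduce both assertions from the injectivity of $da_X$, which I would establish by the standard infinitesimal Abel--Jacobi computation. First observe that the second assertion is a formal consequence of the first: since $\Alb(S_X)$ is a complex torus, every morphism $\mP^1\to\Alb(S_X)$ is constant, so the normalisation $\nu\colon\mP^1\to S_X$ of any rational curve $C\subset S_X$ would be contracted by $a_X$; at a generic point, where $\nu$ is an immersion, the differential $da_X$ would then annihilate a nonzero tangent vector, contradicting its injectivity. Hence it suffices to prove that $da_X$ is injective at every $[r]\in S_X$.

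By Proposition \ref{liscia} the surface $S_X$ is smooth, so $T_{[r]}S_X=H^0(r,N_{r/Q})$ is $2$-dimensional and $H^1(r,N_{r/Q})=0$; since $-K_Q\cdot r=2$ and $r$ is rational this forces $N_{r/Q}\cong\sO_{\mP^1}\oplus\sO_{\mP^1}$ or $\sO_{\mP^1}(1)\oplus\sO_{\mP^1}(-1)$. On the target side, by Lemma \ref{invariantideltrifoglio} and Theorem \ref{formulae} one has $q(S_X)=h^{2,1}(Q)=10$, and by the theory of the cylinder correspondence attached to the incidence variety $\{(x,[r]):x\in r\}\subset Q\times S_X$ (Tihomirov \cite{T}, Welters \cite{W}) the Albanese map is, up to translation, the Abel--Jacobi map into the intermediate Jacobian $J^2(Q)$, with $H^0(S_X,\Omega^1_{S_X})\cong H^{2,1}(Q)=H^1(Q,\Omega^2_Q)$. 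The codifferential of $a_X$ at $[r]$ is then computed by restriction to $r$ followed by the contraction coming from the conormal sequence $0\to N^\vee_{r/Q}\to\Omega^1_Q|_r\to\Omega^1_r\to0$: taking $\wedge^2$ yields a surjection $\Omega^2_Q|_r\twoheadrightarrow\Omega^1_r\otimes N^\vee_{r/Q}$, and composing with Serre duality on $r$, which gives $H^1(r,\Omega^1_r\otimes N^\vee_{r/Q})\cong H^0(r,N_{r/Q})^\vee$, produces
\[
\beta\colon H^{2,1}(Q)=H^1(Q,\Omega^2_Q)\longrightarrow H^1(r,\Omega^2_Q|_r)\longrightarrow H^0(r,N_{r/Q})^\vee .
\]
By the tangent-map description of the Abel--Jacobi map (see \cite{Vo} and \cite{G}), $\beta$ is the codifferential of $a_X$ at $[r]$, so the theorem reduces to showing that $\beta$ is surjective for every $[r]$.

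To prove surjectivity I would make $H^{2,1}(Q)$ explicit. Realising $Q$ as the weighted hypersurface $\{w^2=F(x_0,\dots,x_3)\}\subset\mP(1,1,1,1,2)$ and applying Griffiths--Dolgachev residue theory, one identifies $H^{2,1}(Q)$ with the degree-two part $R_2$ of the Jacobian ring $R=\mC[x_0,\dots,x_3,w]/(w,\partial_0F,\dots,\partial_3F)$; since the partials $\partial_iF$ have degree $3$, the ideal contributes only $w$ in degree $2$, whence $R_2\cong H^0(\mP^3,\sO_{\mP^3}(2))$, the $10$-dimensional space of quadrics, matching Lemma \ref{invariantideltrifoglio}. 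In these coordinates the pairing $\beta(q)(v)$ between a quadric $q$ and a deformation direction $v\in H^0(r,N_{r/Q})$ is given by an explicit residue along $r$ built from the restriction of $q$ to the line $l=\rho(r)$ and the second-order contact of $X$ with $l$. The main obstacle is exactly the nondegeneracy of this pairing at every point, including the quadritangent and hyperflex lines where the normal bundle degenerates to $\sO(1)\oplus\sO(-1)$; here one must use the generality hypothesis that $X$ contains no line, which, as in the tangent-space computation of Proposition \ref{lisciezza}, guarantees that the two contact points of $l$ are smooth points of $X$, to ensure that the quadrics separate the two tangent directions. This nondegeneracy is precisely what the detailed Gauss-map analysis of the Abel--Jacobi image carried out in \cite{W} supplies, and with it $\beta$ is surjective and $da_X$ injective.
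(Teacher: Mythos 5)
Your reduction of the no-rational-curves assertion to the injectivity of $da_X$ is exactly the paper's argument, and your overall framing (Albanese $=$ Abel--Jacobi into the intermediate Jacobian $J(Q)$, injectivity of its differential) is the same route the paper takes. The gap is at the decisive step. Everything up to the definition of $\beta$ is correct setup: the identification $H^{2,1}(Q)\cong R_2\cong H^0(\mP^3,\sO_{\mP^3}(2))$ is right, and injectivity of $da_X$ at $[r]$ is indeed equivalent to surjectivity of $\beta$ at $[r]$. But that surjectivity at \emph{every} point --- which is the entire content of the theorem --- is never proven: you yourself call it ``the main obstacle'' and then dispose of it by asserting that it ``is precisely what the detailed Gauss-map analysis of the Abel--Jacobi image carried out in \cite{W} supplies.'' That is not a proof, and it also points at the wrong part of Welters. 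The statement you need is \cite[Proposition 2.13, p.~27]{W} (injectivity of the differential of the Abel--Jacobi map $S_X\to J(Q)$), which is what the paper cites; Welters' Gauss-map study is logically \emph{downstream} of that proposition --- the Gauss map of the Albanese image is only globally defined because the differential is already known to be injective --- and it is used in the paper for a different purpose, namely the very ampleness of $\omega_S$ (Theorem \ref{veryampleness}). So your appeal is circular with respect to the structure of the source, and the nondegeneracy computation at the quadritangent and hyperflex lines, which you correctly single out as the hard case, is left undone.

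Two smaller points. First, once the injectivity of the Abel--Jacobi differential is granted, you do not need to identify $\Alb(S_X)$ with $J(Q)$ (Welters' Theorem 4.1, or your cylinder-correspondence appeal): the Abel--Jacobi map factors through $a_X$ by the universal property of the Albanese variety, so injectivity of its differential already forces injectivity of $da_X$; the isomorphism is a convenience, not a necessity. Second, your use of the generality hypothesis is off: $X$ is smooth by standing assumption, so the contact points of $l$ with $X$ are automatically smooth points of $X$; the ``no lines'' hypothesis is needed to guarantee $l\not\subset X$ (so that bitangency is a well-posed finite condition and $S$, $S_X$ are surfaces, as in Proposition \ref{lisciezza}), not to ensure smoothness of the contact points.
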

\begin{proof} Let $J(Q)$ be the intermediate Jacobian of $Q$. By \cite[Proposition 2.13 p. 27]{W} we know that the differential of the Abel-Jacobi map $S_X\to J(Q)$ is injective. By \cite[Theorem 4.1]{W} the Abel-Jacobi morphism ${\rm{Alb}}(S_X)\to J(Q)$ is an isomorphism. By the universal property of the Albanese morphism it holds that 
the differential of the Albanese morphism $a_X\colon S_X\to {\rm{Alb}}(S_X)$ is injective. Now let $C\subset S_X$ be a curve such that its normalisation is $\nu\colon \mP^1\to C$. 
Then the image of $C$ by $a_X\colon S_X\to {\rm{Alb}}(S_X)$ is a point. Finally let $[l]$ be a general point of $C$. Then $[l]$ is a smooth point of $C$ but 
${\rm{dim}}_{\mathbb C}{\rm{Ker}}(da_{X,[l]}) \geq 1$ since the tangent direction to $C$ at $[l]$ is sent to $0$ . A contradiction.
\end{proof}

\subsection{Rational curves on the surface of bitangent lines}

\begin{thm}\label{norational}  If $X$ is a smooth quartic with no line contained in it then there are no rational curves on $S$.
\end{thm}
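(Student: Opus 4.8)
The plan is to deduce the absence of rational curves on $S$ from the corresponding statement for $S_X$, which is already established in Theorem \ref{collino}. Recall from Proposition \ref{liscia} that $f\colon S_X\to S$ is an \'etale double cover, classified by the non-trivial $2$-torsion element $\sigma\in\Pic(S)$. The key observation is that an \'etale morphism is, in particular, a finite unramified covering, so any curve in the base can be studied through its preimage upstairs.

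\smallskip

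Suppose, for contradiction, that $C\subset S$ is an irreducible rational curve, with normalisation $\nu\colon\mP^1\to C$. First I would pull $C$ back along $f$ to obtain $C_X:=f^{-1}(C)\subset S_X$. Since $f$ is \'etale of degree $2$, the restriction $f|_{C_X}\colon C_X\to C$ is again a finite \'etale cover of degree $2$ (possibly disconnected). The main step is to control the geometry of this cover via the fundamental group. Because $C$ is rational, its normalisation $\mP^1$ is simply connected, and so the pulled-back double cover, when lifted to the normalisation, must be \emph{trivial}: more precisely, the composite $\mP^1\xrightarrow{\nu}C\hookrightarrow S$ pulls the class $\sigma$ back to a $2$-torsion element of $\Pic(\mP^1)=\mZ$, which is necessarily zero. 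Hence the normalised pull-back splits into two disjoint copies of $\mP^1$, each mapping birationally onto a curve in $S_X$ whose normalisation is $\mP^1$. In either case we produce a rational curve (that is, a curve whose normalisation is $\mP^1$) inside $S_X$, contradicting Theorem \ref{collino}.

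\smallskip

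I would phrase the argument so as to avoid any delicate discussion of whether $C$ itself is smooth: it is cleanest to work entirely at the level of normalisations. Concretely, form the fibre product $\widetilde{C}:=\mP^1\times_S S_X$ along $\nu$ and $f$; this is an \'etale double cover of $\mP^1$, hence is either $\mP^1\sqcup\mP^1$ or an irreducible \'etale cover of $\mP^1$. The latter is impossible because $\mP^1$ is simply connected (equivalently, $H^1(\mP^1,\mZ/2)=0$, so the covering class $\nu^*\sigma$ vanishes). Therefore $\widetilde{C}\cong\mP^1\sqcup\mP^1$, and the induced map $\mP^1\to S_X$ on either component is non-constant, exhibiting a curve on $S_X$ birational to $\mP^1$. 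This directly contradicts the final assertion of Theorem \ref{collino}.

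\smallskip

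The only point requiring a little care is the claim that $\nu^*\sigma=0$, i.e.\ that the pulled-back torsion line bundle on $\mP^1$ is trivial; this is immediate since $\Pic(\mP^1)=\mZ$ is torsion-free, and the triviality of the covering class forces the double cover over $\mP^1$ to be split. I do not expect a genuine obstacle here: the whole content has been front-loaded into Theorem \ref{collino}, and the present statement is essentially a descent argument through the \'etale cover $f$. If one prefers to avoid the normalisation step, an equivalent formulation notes that $f$ being \'etale means a rational curve on $S$ would lift (after at most the base change to its normalisation) to a rational curve on $S_X$, since rationality of the normalisation is preserved under finite \'etale pullback over a simply connected source.
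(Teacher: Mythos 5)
Your proposal is correct and follows essentially the same route as the paper: pull the hypothetical rational curve back along the \'etale double cover $f\colon S_X\to S$, observe that over the normalisation $\mP^1$ the cover trivialises (the paper states this as ``the normalisation of $f^{*}C$ is a union of rational curves''), and derive a contradiction with Theorem \ref{collino}. Your write-up merely makes explicit the simple-connectedness/torsion-freeness argument that the paper leaves implicit.
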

\begin{proof} Let $f\colon S_X \to S$ be the \'etale $2$-to-$1$ covering of $S$ given in Proposition \ref{liscia}. Let $C\hookrightarrow S$ be a curve such that its normalisation is $\nu\colon \mP^1\to C$. It holds that the normalisation of $f^{*}C$ is a union of rational curves. On the other hand since $X$ is general, by Theorem \ref{collino} it holds that these curves cannot exist.
\end{proof}

\section{A counterexample to infinitesimal Torelli}
The surface $S$ comes equipped with a special infinitesimal deformation class which gives a counterexample to infinitesimal Torelli.

\subsection{Torelli infinitesimal deformation}
By  \cite[Theorem 4]{T} or \cite[Proposition 3.15]{W} the tangent bundle sequence gives
\begin{equation}\label{simsigma}
0\to\sO_S\to{\rm{Sym}}^2 (\sQ_S) \otimes \sO_S (\sigma)\to \Omega^1_S\to 0
\end{equation}
where $\sigma\in {\rm{Pic}}^2(S) \setminus\{ 0\}$ is described in Proposition \ref{ilrivestimento doppio ramificato}. We consider the extension class $\xi$ of the sequence (\ref{simsigma}): $\xi\in H^1(S, T_S)$. This class gives two homomorphisms:
$\partial^{(1)}_{\xi}\colon H^0(S, \Omega^1_S)\to H^1(S,\sO_S)$, which is obviously the trivial one since $q(S)=0$, and 
$$\partial^{(2)}_{\xi}\colon H^0(S, \omega_S)\to H^1(S, \Omega^1_S).$$
\noindent They are respectively the first coboundary homomorphism of the sequence (\ref{simsigma}) and the first coboundary homomorphism of the following sequence:
 
\begin{equation}\label{wedgesimsigma}
0\to \Omega^1_S\to \bigwedge^2\bigl ({\rm{Sym}}^2(\sQ_S)\otimes\sO_S(\sigma)\bigr )\to\omega_S\to 0
\end{equation}

%Obviously $\partial^{(1)}_{\xi}=0$ since $q(S)=0$. 
We are going to show that $\partial^{(2)}_{\xi}$ is the trivial homomorphism too. First we show:

\begin{thm}\label{infty} The class $\xi\in H^1(S, T_S)$ of the tangent bundle sequence is non zero.
\end{thm}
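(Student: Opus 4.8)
The plan is to prove the statement by contradiction, reducing the non-triviality of $\xi$ to a concrete vanishing of global sections. Write $\sW:={\rm{Sym}}^2(\sQ_S)\otimes\sO_S(\sigma)$ for the central term of (\ref{simsigma}). Since all three sheaves in (\ref{simsigma}) are locally free, dualizing is exact and yields
$$0\to T_S\to\sW^\vee\to\sO_S\to 0,$$
which splits if and only if (\ref{simsigma}) splits, i.e. if and only if $\xi=0$. A splitting would be a section $s\colon\sO_S\to\sW^\vee$ of the surjection $\sW^\vee\to\sO_S$, and $s(1)$ would be a non-zero element of $H^0(S,\sW^\vee)$. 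Hence it suffices to show $H^0(S,\sW^\vee)=0$.

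The key observation is that $\sW$ is self-dual up to a twist. By Lemma \ref{ARRONDOondo}(2) one has $({\rm{Sym}}^2(\sQ_S))^\vee={\rm{Sym}}^2(\sQ_S)\otimes\sO_S(-2H_{\mathbb{G}|S})$, and since $\sigma$ is a $2$-torsion class (Proposition \ref{ilrivestimento doppio ramificato}) we have $\sO_S(-\sigma)=\sO_S(\sigma)$; therefore
$$\sW^\vee=\sW\otimes\sO_S(-2H_{\mathbb{G}|S}).$$
Thus $h^0(S,\sW^\vee)=h^0(S,\sW(-2H_{\mathbb{G}|S}))$, and I would twist (\ref{simsigma}) by $\sO_S(-2H_{\mathbb{G}|S})$ to obtain
$$0\to\sO_S(-2H_{\mathbb{G}|S})\to\sW(-2H_{\mathbb{G}|S})\to\Omega^1_S(-2H_{\mathbb{G}|S})\to 0.$$
Taking global sections, left-exactness shows that $H^0(\sW(-2H_{\mathbb{G}|S}))=0$ as soon as both $H^0(\sO_S(-2H_{\mathbb{G}|S}))=0$ and $H^0(\Omega^1_S(-2H_{\mathbb{G}|S}))=0$.

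The first vanishing is immediate, since $H_{\mathbb{G}|S}$ is the restriction of the ample Pl\"ucker polarisation, so $\sO_S(-2H_{\mathbb{G}|S})$ has no non-zero sections. For the second I would use the regularity of $S$: by Theorem \ref{formulae}(2) we have $q(S)=0$, that is $H^0(S,\Omega^1_S)=0$. Choosing any non-zero $t\in H^0(S,\sO_S(2H_{\mathbb{G}|S}))$ (the divisor $2H_{\mathbb{G}|S}$ being effective), multiplication by $t$ gives an injection $H^0(\Omega^1_S(-2H_{\mathbb{G}|S}))\hookrightarrow H^0(\Omega^1_S)=0$, because $\Omega^1_S$ is locally free and $t$ is a non-zero-divisor; hence $H^0(\Omega^1_S(-2H_{\mathbb{G}|S}))=0$. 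Combining these vanishings gives $H^0(S,\sW^\vee)=0$, so the dual sequence does not split and $\xi\neq 0$.

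The argument is essentially formal once the twisted self-duality $\sW^\vee\cong\sW(-2H_{\mathbb{G}|S})$ is recognized, which is what turns an a priori inaccessible section count on $S$ into a statement about (\ref{simsigma}) itself. The only step requiring some care is the vanishing $H^0(\Omega^1_S(-2H_{\mathbb{G}|S}))=0$: I would stress that this follows from the regularity $q(S)=0$ together with the multiplication trick, and not from any positivity property of the cotangent bundle $\Omega^1_S$, which one should not expect to have here.
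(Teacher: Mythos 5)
Your proof is correct and takes essentially the same route as the paper's: dualize the tangent bundle sequence (\ref{simsigma}), use the twisted self-duality $({\rm{Sym}}^2(\sQ_S))^\vee\simeq{\rm{Sym}}^2(\sQ_S)\otimes\sO_S(-2H_{\mathbb G|S})$ together with the $2$-torsion of $\sigma$, and then kill the global sections of the twisted middle term by twisting (\ref{simsigma}) by $\sO_S(-2H_{\mathbb G|S})$ and invoking $q(S)=0$. The only difference is one of care: you make explicit (via multiplication by a nonzero section of $\sO_S(2H_{\mathbb G|S})$) the vanishing $H^0(S,\Omega^1_S(-2H_{\mathbb G|S}))=0$, a step the paper's proof passes over as clear.
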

\begin{proof} By contradiction assume that $\xi= 0$. Then even the dual sequence
\begin{equation}\label{simsigmadual}
0\to\sT_S\to{\rm{Sym}}^2 (\sQ_S^{\vee}) \otimes \sO_S (\sigma)\to \sO_S\to 0
\end{equation}
splits. Hence $h^0(S, {\rm{Sym}}^2 (\sQ_S^{\vee}) \otimes \sO_S (\sigma))=1$. By Lemma \ref{ARRONDOondo} $(3)$ we have the natural isomorphism ${\rm{Sym}}^2 (\sQ^{\vee})={\rm{Sym}}^2(\sQ) \otimes \sO_\mathbb G (-2)$ over $\mathbb G$. It restricts over $S$ to the isomorphism ${\rm{Sym}}^2 (\sQ_S^\vee)=({\rm{Sym}}^2(\sQ_S))(-2)$. Up to now we have shown that if the sequence (\ref{simsigmadual}) splits then it follows that:
$$
h^0(S, ({\rm{Sym}}^2(\sQ_S))(-2) \otimes \sO_S (\sigma))=1.
$$
 On the other hand if we tensor the sequence (\ref{simsigma}) by $\sO_S(-2)$ we obtain
$$
0\to\sO_S(-2)\to({\rm{Sym}}^2(\sQ_S))(-2) \otimes \sO_S (\sigma)\to \Omega^1_S(-2)\to 0
$$
which clearly implies $h^0(S, ({\rm{Sym}}^2(\sQ_S))(-2) \otimes \sO_S (\sigma))=0$ since $q(S)=0$. We have shown $\xi\neq 0$.
\end{proof}

\begin{thm}\label{regularcase} $\partial^{(2)}_{\xi}=0$.
\end{thm}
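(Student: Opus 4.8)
The plan is to convert the vanishing of $\partial^{(2)}_{\xi}$ into a statement about global sections. Since $q(S)=0$ we have $H^0(S,\Omega^1_S)=0$, so the long exact cohomology sequence of (\ref{wedgesimsigma}) begins with $0\to H^0(S,\bigwedge^2({\rm{Sym}}^2(\sQ_S)\otimes\sO_S(\sigma)))\to H^0(S,\omega_S)\xrightarrow{\partial^{(2)}_{\xi}}H^1(S,\Omega^1_S)$; hence $\partial^{(2)}_{\xi}=0$ is equivalent to the surjectivity of the first arrow, i.e. to $h^0(S,\bigwedge^2({\rm{Sym}}^2(\sQ_S)\otimes\sO_S(\sigma)))=p_g(S)=45$. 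First I would simplify the middle bundle. Since $\sigma$ is $2$-torsion we have $2\sigma\sim 0$, and Corollary \ref{wedge2sim2} restricted to $S$ gives $\bigwedge^2{\rm{Sym}}^2(\sQ_S)\cong {\rm{Sym}}^2(\sQ_S)\otimes\sO_S(H_{\mathbb G|S})$; therefore $\bigwedge^2({\rm{Sym}}^2(\sQ_S)\otimes\sO_S(\sigma))\cong {\rm{Sym}}^2(\sQ_S)\otimes\sO_S(H_{\mathbb G|S})$, and (\ref{wedgesimsigma}) becomes $0\to\Omega^1_S\to {\rm{Sym}}^2(\sQ_S)\otimes\sO_S(H_{\mathbb G|S})\to\omega_S\to 0$ (consistent with $K_S=3H_{\mathbb G|S}+\sigma$ of Theorem \ref{formulae}). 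As $H^0(S,{\rm{Sym}}^2(\sQ_S)\otimes\sO_S(H_{\mathbb G|S}))\hookrightarrow H^0(S,\omega_S)$, this dimension is automatically $\le 45$, so everything reduces to exhibiting $45$ independent sections, i.e. to proving $h^0(S,{\rm{Sym}}^2(\sQ_S)\otimes\sO_S(H_{\mathbb G|S}))\ge 45$.

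Next I would compute the analogous group on the Grassmannian and show it has dimension exactly $45$. Twisting the sequence (\ref{laduesimmmm}) by $\sO_{\mathbb G}(1)$ yields $0\to\sO_{\mathbb G}\to\bigwedge^2V\otimes\sO_{\mathbb G}(1)\to V\otimes\sQ(1)\to {\rm{Sym}}^2(\sQ)(1)\to 0$. Splitting this into two short exact sequences and using the standard facts $h^0(\mathbb G,\sO_{\mathbb G}(1))=6$, $h^{>0}(\mathbb G,\sO_{\mathbb G}(1))=0$, $H^2(\mathbb G,\sO_{\mathbb G})=0$, together with $h^0(\mathbb G,\sQ(1))=20$ (from Bott's theorem, recognising $\sQ(1)$ as a Schur functor of $\sQ$, or via a further such sequence), a diagram chase gives $h^0(\mathbb G,{\rm{Sym}}^2(\sQ)(1))=4\cdot 20-(36-1)=45$, along with the acyclicity $h^{>0}(\mathbb G,{\rm{Sym}}^2(\sQ)(1))=0$.

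Finally I would show that the restriction map $H^0(\mathbb G,{\rm{Sym}}^2(\sQ)(1))\to H^0(S,{\rm{Sym}}^2(\sQ_S)\otimes\sO_S(H_{\mathbb G|S}))$ is injective, which by the first paragraph forces $h^0(S,\cdot)=45$ and hence $\partial^{(2)}_{\xi}=0$. Injectivity amounts to $H^0(\mathbb G,\mathcal{I}_S\otimes{\rm{Sym}}^2(\sQ)(1))=0$, that is, to the assertion that no nonzero section of ${\rm{Sym}}^2(\sQ)(1)$ vanishes identically on $S$. The plan is to feed a locally free resolution of $\sO_S$ on $\mathbb G$ — available from the description of $S$ as the bitangent (equivalently perfect-square) degeneracy locus, and compatible with its class $28\sigma_h+12\sigma_p$ of Lemma \ref{chowclass} — into the bundle ${\rm{Sym}}^2(\sQ)(1)$, and to kill the higher cohomology of the resulting twisted syzygy bundles by Bott vanishing, thereby forcing $H^0(\mathbb G,\mathcal{I}_S\otimes{\rm{Sym}}^2(\sQ)(1))=0$. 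This last step is the main obstacle: producing and controlling the resolution of $\mathcal{I}_S$ inside $\mathbb G$ (or, equivalently, carrying out the section count directly on $S$ by the cohomological computations underlying Theorem \ref{formulae}) is where the real work lies, the preceding reductions being essentially formal.
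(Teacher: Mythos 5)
Your formal reductions are correct and essentially the same as the paper's: using $q(S)=0$ you identify $\partial^{(2)}_{\xi}=0$ with the surjectivity of $H^0(S,\bigwedge^2({\rm{Sym}}^2(\sQ_S)\otimes\sO_S(\sigma)))\to H^0(S,\omega_S)$, use that $\sigma$ is $2$-torsion, and reduce everything to the inequality $h^0(S,\bigwedge^2{\rm{Sym}}^2(\sQ_S))\geq 45$. Your computation $h^0(\mathbb G,{\rm{Sym}}^2(\sQ)(1))=45$ from the twist of the sequence (\ref{laduesimmmm}) is also correct. But the decisive step — injectivity of the restriction map $H^0(\mathbb G,{\rm{Sym}}^2(\sQ)(1))\to H^0(S,{\rm{Sym}}^2(\sQ_S)\otimes\sO_S(H_{\mathbb G|S}))$, i.e. $H^0(\mathbb G,\sI_S\otimes{\rm{Sym}}^2(\sQ)(1))=0$ — is exactly what you do not prove, and you say so yourself ("this last step is the main obstacle"). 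This is not a peripheral technicality: it is the entire nontrivial content of the theorem. The paper closes precisely this hole by quoting Welters \cite[Page 46]{W}, which supplies the injection $\bigwedge^2H^0(\mP^3,\sO_{\mP^3}(2))\hookrightarrow H^0(S,\bigwedge^2{\rm{Sym}}^2(\sQ_S))$; under the identifications $H^0(\mP^3,\sO_{\mP^3}(2))\cong{\rm{Sym}}^2V$ and Corollary \ref{wedge2sim2}, Welters' statement is literally the injectivity you need. So your proposal, as written, is an unfinished proof with a genuine gap, not an alternative proof.

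Moreover, the plan you sketch for filling the gap is unlikely to work as stated. A locally free resolution of $\sI_S$ in $\mathbb G$ is not available off the shelf: $S$ is not a zero locus of a section of a homogeneous bundle nor a linear-algebra degeneracy locus, because the bitangency condition says that $F|_l\in{\rm{Sym}}^4(\sQ)_{[l]}$ lies in the image of the \emph{quadratic} map $q\mapsto q^2$ from ${\rm{Sym}}^2(\sQ)_{[l]}$, which is not a linear condition; and since $S$ is not invariant under $\PGL(4)$, the syzygy sheaves of any resolution will not be homogeneous bundles, so Bott-type vanishing does not apply to them directly. Concretely, a section of ${\rm{Sym}}^2(\sQ)(1)\cong\bigwedge^2{\rm{Sym}}^2(\sQ)$ given by $\omega\in\bigwedge^2{\rm{Sym}}^2V$ vanishes at $[l]$ iff $\omega\in K_l\wedge{\rm{Sym}}^2V$, where $K_l\subset{\rm{Sym}}^2V$ is the $7$-dimensional space of quadrics containing $l$; what must be excluded is a nonzero $\omega$ satisfying this for \emph{every} bitangent line $l$, and ruling this out requires genuine geometric information about the family of bitangents (this is what Welters establishes). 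Either reproduce that argument or cite it; without one of the two, the proof is incomplete.
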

\begin{proof} 
By Theorem \ref{formulae} we have that $p_g(S)=45$.
\noindent Since $$
\bigwedge^2\bigl ({\rm{Sym}}^2(\sQ_S)\otimes\sO_S(\sigma)\bigr )=\bigwedge^2{\rm{Sym}}^2(\sQ_S)$$ then we have only to show that $h^0(S,\bigwedge^2{\rm{Sym}}^2(\sQ_S))\geq 45$. On the other hand by \cite[Page 46]{W} there is an injection: $$\bigwedge^2H^0(\mP^3,\sO_{\mP^3}(2))\hookrightarrow H^0(S,\bigwedge^2{\rm{Sym}}^2(\sQ_S)).$$ Since ${\rm{dim}}\bigwedge^2H^0(\mP^3,\sO_{\mP^3}(2))=45$ the claim follows.
\end{proof}
We have shown the following
\begin{thm}\label{primooo} The surface $S\subset\mathbb G$ is smooth, regular, of general type, it contains no rational curve and it is a counterexample to  infinitesimal Torelli. 
\end{thm}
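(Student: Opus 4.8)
The plan is to assemble the statement from the ingredients already established, supplying only the one numerical verification that has not yet been made explicit, namely that $S$ is of general type. Smoothness is exactly Proposition \ref{lisciezza}, which uses the hypothesis that $X$ contains no line. Regularity, that is $q(S)=0$, is recorded in Theorem \ref{formulae}(2), and the absence of rational curves on $S$ is Theorem \ref{norational}. So three of the five assertions can be cited verbatim.

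For the general-type claim I would argue numerically from the canonical class. By Theorem \ref{formulae}(2) we have $K_S=3H_{\mathbb G|S}+\sigma$, and by Proposition \ref{ilrivestimento doppio ramificato} the class $\sigma$ is a nontrivial $2$-torsion element of $\mathrm{Pic}\,S$, hence numerically trivial. Therefore $K_S\equiv 3H_{\mathbb G|S}$ numerically, and since $H_{\mathbb G|S}$ is the restriction of the ample Plücker class it is ample; thus $K_S$ is ample, in particular big and nef. Concretely $K_S^2=9\,H_{\mathbb G|S}^2=9\deg(S)=360>0$ using Lemma \ref{chowclass}, which exhibits $S$ as a surface of general type.

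Finally, for the failure of infinitesimal Torelli, I would recall that the tangent bundle sequence (\ref{simsigma}) produces a deformation class $\xi\in H^1(S,T_S)$, which is nonzero by Theorem \ref{infty}. Since $S$ is a surface, the differential of Griffiths' period map is controlled by the two coboundary homomorphisms $\partial^{(1)}_{\xi}$ and $\partial^{(2)}_{\xi}$. The first is the zero map because $H^1(S,\sO_S)=0$ by regularity, and the second vanishes by Theorem \ref{regularcase}. Consequently a nonzero class $\xi$ lies in the kernel of the infinitesimal period map, which is precisely the assertion that the infinitesimal Torelli property fails for $S$.

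Since every substantive ingredient (Theorems \ref{infty}, \ref{regularcase}, \ref{norational} and Proposition \ref{lisciezza}) is already in hand, the only genuinely new step to carry out is the general-type verification, which collapses to the inequality $K_S^2>0$; the conceptually delicate point, the simultaneous conditions $\xi\neq 0$ and $\partial^{(2)}_{\xi}=0$, was the content of the two preceding theorems and is invoked rather than reproved here.
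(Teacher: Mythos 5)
Your proposal is correct and takes essentially the same route as the paper, whose proof of Theorem \ref{primooo} is simply the combination of Proposition \ref{lisciezza}, Theorem \ref{formulae}, Theorem \ref{norational}, Theorem \ref{infty} and Theorem \ref{regularcase}. Your only addition is to make explicit the general-type verification (since $\sigma$ is $2$-torsion, $K_S$ is numerically equivalent to the ample class $3H_{\mathbb G|S}$, with $K_S^2=9\deg(S)=360>0$), a point the paper leaves implicit in Theorem \ref{formulae}; this is a sound and welcome clarification, consistent with the value $K_S^2=360$ used later in the proof of Theorem \ref{domandareferee}.
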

\begin{proof} It follows straightly by Proposition \ref{lisciezza}, Theorem \ref{formulae}, Theorem \ref{norational},Theorem \ref{infty} and Theorem \ref{regularcase}.
\end{proof}

\section{The canonical map of the surfaces of bitangents}

We study the canonical map $\phi_{|K_S|}\colon S\dashrightarrow \mP(H^0(S,\sO_S(K_S))^{\vee})$.
\subsection{Very ampleness of the canonical system}
To show that $$\phi_{|K_S|}\colon S\to\mP(H^0(S,\sO_S(K_S))^{\vee})$$ is an embedding let us consider the following rank $3$ vector bundle $({\rm{Sym}}^2(\sQ))^{\vee}$ and the associated projective bundle $\pi_3\colon \mP({\rm{Sym}}^2(\sQ))\to\mathbb G$. Geometrically a point $\eta\in \mP({\rm{Sym}}^2(\sQ))$ is the datum of a point $[l]\in\mathbb G$ and of a set of two points $P,P'\in l$ where $P$ is not necessarily distinct from $P'$. We consider the map $\Phi\colon \mP({\rm{Sym}}^2(\sQ))\to \mP(\bigwedge^2({\rm{Sym}}^2V)^{})$ which to a point  $ \mP({\rm{Sym}}^2(\sQ))\ni \eta=([l],P,P')$ associates annihilator of the pencil of quadrics inside $\mP(V)$ containing the two points $P,P'$ or, in the case where $P=P'$ it associates annihilator of the space of quadrics $A$ passing through $P$ and such that $T_PA$ contains the line $l$.

\begin{prop}\label{applicazione} The map  $\Phi\colon \mP( {\rm{Sym}}^2(\sQ) )\to \mP(\bigwedge^2 {\rm{Sym}}^2V^{} )$ is an embedding.
\end{prop}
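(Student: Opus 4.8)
The plan is to prove that $\Phi$ is a closed embedding by checking that it is injective on closed points and everywhere unramified (injective differential); since the source $\mP(\mathrm{Sym}^2(\sQ))$ is a projective bundle over the projective variety $\mathbb{G}$, hence proper, an injective unramified morphism is automatically a closed embedding in characteristic $0$. I will work through the second Veronese embedding $v_2\colon \mP(V)\hookrightarrow \mP(\mathrm{Sym}^2 V^\vee)$, $P\mapsto [\mathrm{ev}_P]$, where $\mathrm{ev}_P\in \mathrm{Sym}^2 V^\vee$ is evaluation of quadrics at $P$. Under the identification of $\mP(\mathrm{Sym}^2(\sQ))$ with the parameter space of length-$2$ subschemes $Z\subset \mP^3$ together with their spanning line $l$ (the fibre over $[l]$ being $\mathrm{Sym}^2 l\cong \mP^2$), the map $\Phi$ sends $\eta=([l],\{P,P'\})$ to the Plücker point of the line $\Lambda=\langle v_2(Z)\rangle\subset \mP(\mathrm{Sym}^2 V^\vee)$ spanned by the length-$2$ scheme $v_2(Z)$; equivalently to the annihilator of the quadrics through $Z$, which is the Plücker description of the statement's target. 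Since $v_2$ is an embedding, $v_2(Z)$ has length $2$ and spans an honest line, so $\Phi$ is a genuine morphism into $\mathbb{G}(2,\mathrm{Sym}^2 V^\vee)$.

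For injectivity I recover $Z$ (and then $l$) from $\Lambda$. The Veronese variety $v_2(\mP^3)$ is cut out by quadrics (the $2\times 2$ minors) and contains no line; therefore any line of $\mP(\mathrm{Sym}^2 V^\vee)$ meets $v_2(\mP^3)$ in a subscheme of length $\le 2$, for otherwise all the defining quadrics would restrict to the zero form on that line and the line would lie on the Veronese. Applying this to $\Lambda$, which already contains the length-$2$ scheme $v_2(Z)$, gives $\Lambda\cap v_2(\mP^3)=v_2(Z)$; thus $Z=v_2^{-1}(\Lambda\cap v_2(\mP^3))$ is determined, and $l$ is then the line through $Z$ when $Z$ is reduced, or the tangent direction cut out by $\Lambda$ when $Z$ is not reduced. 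Hence $\Phi$ is injective.

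For the differential I treat the two strata of the Hilbert scheme separately, reducing everything to linear independence in $\mathrm{Sym}^2 V^\vee$. At a reduced $Z=\{P,P'\}$ the tangent space is $T_P\mP^3\oplus T_{P'}\mP^3$, and a short computation shows that $d\Phi$ kills a vector only if the corresponding tangent direction to the Veronese at $v_2(P)$ already lies in $\Lambda=\langle \mathrm{ev}_P,\mathrm{ev}_{P'}\rangle$; this would force $\mathrm{ev}_{P'}$ into the embedded tangent space $\langle \mathrm{ev}_P,\partial_u\mathrm{ev}_P\rangle$, i.e.\ every quadric singular at $P$ to pass through $P'$, which is false for $P\ne P'$. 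The delicate case is the non-reduced stratum, where $Z$ is supported at a single point $P$ with tangent direction $v$ (the datum of the line $l\ni P$): there I compute $d\Phi$ on the three types of first-order deformation — translating the support, rotating the direction $v$, and splitting $Z$ into two points — and show it is injective. Each obstruction reduces to the linear independence in $\mathrm{Sym}^2 V^\vee$ of $\mathrm{ev}_P$, the first derivatives $\partial_u\mathrm{ev}_P$, and the second derivative $\partial_v^2\mathrm{ev}_P$, which holds precisely because a quadric is determined by its $2$-jet at any point, so that evaluation together with all first and second partial-derivative functionals at $P$ form a basis of $\mathrm{Sym}^2 V^\vee$. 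With $d\Phi$ injective everywhere and $\Phi$ injective and proper, $\Phi$ is a closed embedding.

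The main obstacle is exactly this non-reduced stratum: one must correctly identify the tangent space to $\mP(\mathrm{Sym}^2(\sQ))$ at a non-reduced length-$2$ scheme and the second-order behaviour of the span map there. The computation succeeds only because of the borderline $2$-jet-ampleness of $\mathcal{O}_{\mP^3}(2)$ — the very feature that fails for higher-order data along $l$, where $\mathcal{O}_{\mP^3}(2)|_l=\mathcal{O}_{\mP^1}(2)$ no longer separates length-$4$ schemes — so the argument must stay within second order and exploit the full structure of $\mathrm{Sym}^2 V^\vee$.
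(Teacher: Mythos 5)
Your proposal is correct, but it follows a genuinely different route from the paper's. The paper deduces the Proposition from a more general statement (its Proposition on equivariant maps): any \emph{injective} $\PGL(4)$-equivariant morphism $\mathcal{H}ilb_2(\mP^3)\to\mP^N$ is an embedding. There the delicate check of the differential along the closed orbit $\{(l,2p)\}$ is done with no explicit computation at all, by observing that the kernel of $d\Phi$ must be invariant under the stabilizer $G_{l_0,p_0}$, listing the invariant subspaces of the tangent space (tangent spaces to $\mathcal{X}_{l_0}$, $\mathcal{X}_{p_0}$, $\mathcal{Z}_{l_0}$, $\mathcal{Y}$), and handling the one non-homogeneous curve $\mathcal{X}_{l_0,p_0}$ via a lemma on $\Ga$-equivariant injections of $\mP^1$ into $\mP^N$; notably, the set-theoretic injectivity of $\Phi$ enters as a \emph{hypothesis} and is not verified in detail. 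You instead argue directly through the Veronese re-embedding: your lemma that a line of $\mP(\bigwedge^2{\rm{Sym}}^2V)$ meets $v_2(\mP^3)$ in a subscheme of length at most $2$ (because the Veronese is cut out by quadrics and contains no lines) gives injectivity — a point the paper leaves implicit — and essentially the same fact, upgraded to the scheme-theoretic equality $\Lambda\cap v_2(\mP^3)=v_2(Z)$, is what makes your jet computation at the non-reduced stratum work, since the kernel of the differential of the span map $\mathrm{Hilb}_2(\mP^9)\to\mG(2,{\rm{Sym}}^2V)$ is the tangent space to deformations of $v_2(Z)$ inside its spanning line. Your approach buys a self-contained, elementary and fully explicit proof that also covers injectivity; the paper's buys conceptual economy and a statement of independent interest valid for every injective equivariant morphism, with no coordinates. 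Two small cautions on your write-up: at the non-reduced point the functionals that actually appear in $d\Phi$ include the \emph{mixed} second derivatives $\partial_v\partial_u\mathrm{ev}_P$ for $u$ transverse to $l$, not only $\partial_v^2\mathrm{ev}_P$ (your stated justification — that evaluation plus all first and second partials at $P$ form a basis of the dual of the space of quadrics — does cover this, but the list as written is incomplete); and the two "short computations" you invoke should be displayed, e.g.\ by writing $I_{Z_\epsilon}=(x_2-\epsilon(a_0+a_1x_1),\,x_3-\epsilon(b_0+b_1x_1),\,x_1^2-\epsilon(c_0+c_1x_1))$ and tracking the deformed span, which confirms your claim.
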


We note that six-dimensional variety  $\mP( {\rm{Sym}}^2(\sQ))$ can be viewed as the Hilbert scheme $\mathcal{H}ilb_2(\mP^3)$ parametrizing unordred pairs of distinct points and points with a tangent direction. Indeed $\mP( {\rm{Sym}}^2(\sQ))$ is the bundle over $\mG$ formed by the pairs $(l,p_1+p_2)$, where $l\in\mG$ is a line in $\mP^3$ and $p_1+p_2$ is an effective degree two divisor on $l$. The group $G=\mP \mathrm{GL}(V)\simeq \mP \mathrm{GL}(4)$ acts canonically on $\mP^3$; its action induces corresponding actions on $\mathcal{H}ilb_2(\mP^3)$ and on the target space $\mP(\bigwedge^2 {\rm{Sym}}^2V)$ for the morphism $\Phi$. Clearly, the map $\Phi$  is equivariant, which means that for every $x\in \mathcal{H}ilb_2(\mP^3)$ and every $g\in G$, $\Phi(g(x))=g(\Phi(x))$.

The above Proposition \ref{applicazione} then follows from the more general proposition below, which might be of some independent interest:

\begin{prop}\label{equivariante} Let $\Phi: \mathcal{H}ilb_2(\mP^3)\to \mP^N$ be a $G$-equivariant morphism (with respect to some action of $G=\mP \mathrm{GL}(4)$ on $\mP^N$). If $\Phi$ is injective, then it is an embedding.
\end{prop}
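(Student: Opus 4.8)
The plan is to prove that a $G$-equivariant injective morphism $\Phi$ from the Hilbert scheme $\mathcal{H}ilb_2(\mP^3)$ is automatically an immersion, i.e.\ has everywhere injective differential; together with injectivity (which gives injectivity on points and, for a proper source, a homeomorphism onto the image) this yields a closed embedding. Since $\mathcal{H}ilb_2(\mP^3)$ is smooth and projective, the only thing to establish beyond the hypothesis is that $d\Phi$ is injective at every point. The key idea is that equivariance reduces the verification of injectivity of $d\Phi$ at an arbitrary point to a computation at finitely many orbit representatives, because $G=\PGL(4)$ acts on $\mathcal{H}ilb_2(\mP^3)$ with only finitely many orbits.

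First I would classify the $G$-orbits on $\mathcal{H}ilb_2(\mP^3)=\{(l,p_1+p_2)\}$. A point records a line $l\subset\mP^3$ together with an effective degree-two divisor $p_1+p_2$ supported on $l$. Since $\PGL(4)$ acts transitively on lines and, fixing $l$, the stabiliser acts on the degree-two divisors of $l\cong\mP^1$ with exactly two orbits (two distinct points, or one double point), there are precisely two orbits on the whole space: the open orbit $\mathcal O_1$ of pairs of distinct points and the closed orbit $\mathcal O_2$ of double points. I would fix one representative in each orbit, say $x_1=(l,\{P,P'\})$ with $P\neq P'$ and $x_2=(l,2P)$.

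The central step is the following equivariance argument for the differential. Because $\Phi$ is $G$-equivariant, for each $g\in G$ we have $\Phi\circ g = g\circ\Phi$ as maps, where $g$ acts on the target $\mP^N$; differentiating at a point $x$ gives $d\Phi_{g(x)}\circ dg_x = dg_{\Phi(x)}\circ d\Phi_x$. Since $dg_x$ and $dg_{\Phi(x)}$ are isomorphisms, $d\Phi_{g(x)}$ is injective if and only if $d\Phi_x$ is. Hence the rank of $d\Phi$ is constant along each $G$-orbit, and it suffices to check injectivity of $d\Phi$ at the two representatives $x_1,x_2$. I would now invoke the hypothesis that $\Phi$ is injective to upgrade pointwise injectivity to injectivity of the differential along the open orbit: on the open orbit $\mathcal O_1$ the rank is already maximal off a proper closed set by generic smoothness, hence by the orbit-constancy of the rank it is injective on all of $\mathcal O_1$. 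The remaining work is to verify directly that $d\Phi$ is injective at the single representative $x_2$ of the closed orbit; this is a finite tangent-space computation, using the explicit description of $\Phi$ near a double point $(l,2P)$ (where $\Phi$ records the annihilator of quadrics $A$ through $P$ with $l\subset T_PA$) to compute the $6$-dimensional tangent space to $\mathcal{H}ilb_2(\mP^3)$ at $x_2$ and check it maps injectively.

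The main obstacle I anticipate is exactly this last local computation at the closed orbit, where $\Phi$ is defined by the degenerate recipe involving tangency rather than incidence: one must make the tautological identification of $T_{x_2}\mathcal{H}ilb_2(\mP^3)$ with the relevant symmetric-tensor data and show that the induced linear map into $\bigwedge^2\mathrm{Sym}^2V$ has trivial kernel, which is where the geometry genuinely enters. I would organise this by deforming $x_2$ in two transverse families—moving the line $l$ while fixing the double point, and moving the divisor $2P$ along $l$—and checking that each produces an independent first-order variation of the annihilator subspace, so that no nonzero tangent vector is killed. Everywhere else the argument is formal, resting only on the finiteness of orbits and the equivariance identity $d\Phi_{g(x)}\circ dg_x = dg_{\Phi(x)}\circ d\Phi_x$.
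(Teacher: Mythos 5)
Your reduction steps are fine and in fact agree with the paper's opening moves: since $\mathcal{H}ilb_2(\mP^3)$ is smooth and projective, injectivity plus everywhere-injective differential gives an embedding; the orbit decomposition into the open orbit (distinct points) and the closed orbit (double points) is correct; the identity $d\Phi_{g(x)}\circ dg_x = dg_{\Phi(x)}\circ d\Phi_x$ makes the rank of $d\Phi$ constant along orbits; and generic smoothness then settles the open orbit. But the entire content of the proposition is the remaining step — injectivity of $d\Phi$ at a point $(l_0,2p_0)$ of the closed orbit — and there your proposal has a genuine gap: you defer it to ``a finite tangent-space computation, using the explicit description of $\Phi$'' as the annihilator-of-quadrics map. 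That description is not available here: the proposition concerns an \emph{arbitrary} $G$-equivariant injective morphism $\Phi\colon \mathcal{H}ilb_2(\mP^3)\to\mP^N$, with respect to an unspecified action of $G$ on $\mP^N$; the annihilator map is the special case of Proposition \ref{applicazione}, which the paper \emph{deduces} from Proposition \ref{equivariante}, not the other way around. The paper's proof is designed precisely to avoid any explicit formula: it observes that $\ker d\Phi_{(l_0,2p_0)}$ is a linear subspace of $W=T_{(l_0,2p_0)}\mathcal{X}$ invariant under the stabilizer $G_{l_0,p_0}$, notes that every invariant subspace is contained in the tangent space of one of the invariant subvarieties $\mathcal{Y}$, $\mathcal{X}_{l_0}$, $\mathcal{X}_{p_0}$, $\mathcal{Z}_{l_0}$, $\mathcal{X}_{l_0,p_0}$, and excludes kernel vectors on each of these by homogeneity — including a separate statement (Lemma \ref{P^1-equivariante}, using the classification of unipotent $\Ga$-actions on $\mP^N$) for the curve $\mathcal{X}_{l_0,p_0}$, which is \emph{not} homogeneous — before finally using the open $G_{l_0,p_0}$-orbit in $W$ to rule out kernel vectors in generic directions.

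Even if you were content to prove only the special case the paper needs, your plan at the closed orbit is insufficient as stated: the two deformation families you propose — moving the line through the fixed double point (a $2$-dimensional family of lines through $P$) and moving the divisor along the fixed line $l_0$ (again $2$-dimensional) — span at most a $4$-dimensional subspace of the $6$-dimensional tangent space $W$, since deformations that move $l_0$ away from $p_0$ altogether are not captured. So injectivity of $d\Phi$ on those families, even with independent images, cannot imply injectivity on all of $W$. The missing idea is exactly the stabilizer-invariance argument: because the kernel is a $G_{l_0,p_0}$-invariant subspace, it suffices to control it on the tangent spaces of invariant subvarieties and on the open stabilizer orbit, and this is what replaces the explicit computation that your proposal leans on but cannot perform in the stated generality.
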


(We owe to an anonymous referee the idea for the following proof).

\begin{proof}
 The six-dimensional variety $\mathcal{X}:=\mP( {\rm{Sym}}^2(\sQ) )\simeq \mathcal{H}ilb_2(\mP^3)$ decomposes as the union of two orbits under $G$: namely the closed orbit $\mathcal{Y}$ formed by the pairs of the form $(l,2p)$ with $p\in l\in \mG$, and its complement $\mathcal{X}\setminus\mathcal{Y}$. It is clear that the differential of $\Phi$ has maximal rank at every point of the open set $\mathcal{X}\setminus\mathcal{Y}$.
 
 Hence we restrict our attention to an arbitrary point $(l_0,2p_0)\in\mathcal{Y}$. Our aim is proving that the differential of $\Phi$ at this point has maximal rank (i.e. rank six). 
 
We define the following closed subvarieties of $\mathcal{X}$:
\begin{equation*}
\begin{matrix}
  \mathcal{X}_{l_0}&=&\{(l_0,p_1+p_2)\, |\,  p_1,p_2\in l_0\}\\
  \mathcal{X}_{p_0} &=& \{(l,p_0+p)\, |\, l\ni p_0, p\in l\}\\
  \mathcal{Z}_{l_0} &=& \{(l,p_1+p_2)\in\mathcal{X}\, |\, l\cap l_0\neq\emptyset\}\\
 \mathcal{X}_{l_0, p_0} &=& \mathcal{X}_{l_0}\cap \mathcal{X}_{p_0}
\end{matrix}
\end{equation*}
We also define the subgroups $G_{l_0}\subset G$ to be the stabilizer of the line $l_0$, $G_{p_0}$  the stabilizer of the point $p_0$ and finally set $G_{l_0,p_0}=G_{l_0}\cap G_{p_0}$.

The key idea of the proof is that the kernel of the differential of $\Phi$ at $(l_0,2p_0)$ must be invariant under the natural action of the group $G_{l_0,p_0}$ on the tangent space $T_{(l_0,2p_0)}(\mathcal{X})=:W$. 

We note that all the above defined subvarieties (including $\mathcal{Y}$) are invariant under $G_{l_0,p_0}$, so their tangent spaces are invariant too. Also, every other $G_{l_0,p_0}$-invariant subspace of $W$ is contained in the tangent space of one of the above varieties. Also, the $G_{l_0,p_0}$-action on $W$ admits an open orbit $W^0$. If this orbit contains a (necessarily non-zero) vector of the kernel, then the differential at $(l_0,2p_0)$ would vanish identically. However, this is not the case, as we now show: consider the subvariety $\mathcal{Y}\subset\mathcal{X}$: as we noticed, it is a homogeneous space for the group $G$ and   the restriction of $\Phi$ to this variety is a $G$-invariant injective morphism, so it is an embedding. This shows in particular that the differential at $(l_0,2p_0)$ of $\Phi$ cannot vanish at any non-zero vector of $T_{(l_0,2p_0)}(\mathcal{Y})\subset W$. 

It remains to exclude that the differential vanishes on some  non-zero vector tangent to $\mathcal{X}_{l_0}, \mathcal{X}_{p_0}$ or $\mathcal{Z}_{l_0}$. 

Consider first the case of the two-dimensional variety $\mathcal{X}_{l_0}$: it contains two invariant curves isomorphic to $\mP^1$, namely $\mathcal{X}_{l_0}\cap \mathcal{Y}$ and $\mathcal{X}_{l_0, p_0}$. The first one is a homogeneous space under $G_{l_0}$, so that the injective morphism $\Phi$ restricted to this curve is an embedding. The second one is not really a homogeneous space; however the group $G_{l_0,p_0}$ acts transitively on $\mathcal{X}_{l_0,p_0}\setminus \{(l_0,2p_0)\}\simeq \mA^1$ as the group of affine transformations. To prove that the differential cannot vanish at the (special) point $(l_0,2p_0)$ we use the following general result:

\begin{lem}\label{P^1-equivariante}
Let $\Ga\times\mP^N\to\mP^N$ be a non-trivial action of the additive group $\Ga$ on a projective space.
Let $\Phi:\mP^1\to\mP^N$ be an injective morphism which is equivariant under the additive group $\Ga$, acting in the usual way on $\mP^1$. Then the curve $\Phi(\mP^1)$ is smooth and the map $\Phi$ is an embedding.
\end{lem}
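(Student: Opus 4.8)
The plan is to reduce the statement to a concrete linear-algebra picture and then recognize $\Phi$ as a rational normal curve. Since $\mP^1$ is a smooth projective curve and $\Phi$ is injective on points, it suffices to prove that $\Phi$ is an immersion, i.e. that $d\Phi$ is injective at every point; granting this, $\Phi$ is a closed embedding and its image is automatically smooth. The first step is to linearize the action: because $\Ga$ is unipotent, its action on $\mP^N=\mP(W)$ (with $\dim_{\mC} W=N+1$) is induced by a linear representation $\Ga\to\mathrm{GL}(W)$, necessarily of the form $t\mapsto\exp(t\nu)$ for a single nilpotent endomorphism $\nu\in\mathrm{End}(W)$. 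I would then fix the usual action of $\Ga$ on $\mP^1$ as the translation $t\mapsto t+s$ on the affine chart $\mA^1$, whose unique fixed point is $\infty$.

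Next I would exploit equivariance to write $\Phi$ explicitly on the open orbit. Choosing a lift $w_0\in W$ of $\Phi(0)$, equivariance gives $\Phi(s)=\Phi(s\cdot 0)=s\cdot\Phi(0)=[\exp(s\nu)w_0]$ for every $s\in\mA^1$. Since $\nu$ is nilpotent, $\exp(s\nu)w_0=\sum_{k=0}^{m}\frac{s^k}{k!}\nu^k w_0$, where $m$ is the largest integer with $\nu^m w_0\neq 0$. The key algebraic observation is that the vectors $w_0,\nu w_0,\dots,\nu^m w_0$ are linearly independent, which one sees by applying $\nu^{m-j}$ to a putative relation to isolate its first nonzero coefficient; moreover injectivity of $\Phi$ forces $m\geq 1$. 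In the basis these vectors span, $\Phi$ then takes the form $[1:s:s^2/2!:\cdots:s^m/m!]$ on $\mA^1$, and after homogenizing and rescaling the $k$-th coordinate by $k!$ it becomes the standard parametrization $[x_0^m:x_0^{m-1}x_1:\cdots:x_1^m]$ of the degree-$m$ rational normal curve $\mP^1\hookrightarrow\mP^m\subset\mP^N$. Since the rational normal curve is a closed embedding with smooth image, the lemma follows.

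The one step that genuinely requires care, and which is the reason for organizing the argument in this way, is the behaviour at the fixed point $\infty$. The naive equivariance argument—that the rank of $d\Phi$ is constant along each $\Ga$-orbit and hence equal to $1$ on the dense orbit $\mA^1$—fails precisely at $\infty$, because the differential of a unipotent automorphism at its fixed point is the identity on the tangent line and therefore carries no rank information. The virtue of the rational-normal-curve description is that it treats $\infty$ on exactly the same footing as every other point: the forms $x_0^m,x_0^{m-1}x_1,\dots,x_1^m$ have no common zero and define an embedding of all of $\mP^1$, so both injectivity and immersivity at $\infty=[0:1]$, which maps to the fixed point $[\nu^m w_0]$, are built in. Finally I would record the degenerate case to be excluded: if $\nu w_0=0$ then $\Phi$ is constant on the orbit, contradicting injectivity, which is where the hypothesis that the $\Ga$-action on $\mP^N$ is non-trivial enters.
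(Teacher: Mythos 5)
Your proof is correct and takes essentially the same route as the paper's: linearize the $\Ga$-action by a single nilpotent endomorphism $\nu$, and recognize the orbit closure $\Phi(\mP^1)$ as a rational normal curve inside the linear span of $w_0,\nu w_0,\dots,\nu^m w_0$. The paper merely cites this as a consequence of the classification of unipotent matrices, whereas you supply the details (linear independence of the $\nu^k w_0$, the explicit parametrization, and the identification of $\Phi$ itself with the standard embedding on all of $\mP^1$, including the fixed point at infinity), which in particular makes explicit why the map, and not just its image, is an embedding.
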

\begin{proof}
The possible actions of $\Ga$ on $\mP^N$ are given by unipotent matrices: namely from a unipotent matrix $T\in\SL_{N+1}(\mC)$ one constructs an action by setting $\Ga\times \mP^N\ni (t,[v])\mapsto [T^t\cdot v] \in\mP^N$, where the exponentiation is well defined since $T$ is unipotent. It follows from the classification of unipotent matrices that the closure of the orbit of any non-fixed point under such an action is a smooth curve. Actually, under a suitable projection onto a subspace,  such a curve is sent to a rational normal curve on that subspace.
\end{proof}

Since our map is equivariant under the full group of affine transformations of the line it is also equivariant under its subgroup $\Ga$ of translations; hence the  above lemma applies and prove the non-vanishing of the differential of the restriction of $\Phi$ to $\mathcal{X}_{l_0,p_0}$. 

We have then proved that the differential of $\Phi$ at $(l_0,p_0)$ cannot vanish in the tangent  directions to the two invariant subvarieties. Since the other tangent directions form a homogeneous space for the group $G_{l_0}$, the differential cannot vanish on any of these directions, hence it has no   zero on any non-zero vector tangent to $\mathcal{X}_{l_0}$.

Consider now the case of the three-dimensional variety $\mathcal{X}_{p_0}$.  It is invariant under the group $G_{l_0}$ and contains the two invariant subvarieties $\mathcal{X}_{l_0,p_0}$ and $\mathcal{X}_{p_0}\cap \mathcal{Y}$. The first one, as mentioned, is isomorphic to $\mP^1$, the second one to $\mP^2$. We have already proved that the differential of $\Phi$ cannot vanish on the direction tangent to $\mathcal{X}_{l_0,p_0}$; concerning the second invariant subvariety, it is a homogeneous space under $G_{p_0}$, hence again the differential cannot vanish on any of its tangent directions. We then conclude as before, since all the other tangent directions form a unique orbit for the action of $G_{p_0}$.

Consider now the fifth-dimensional invariant subvariety $\mathcal{Z}_{l_0}$. Its maximal invariant subvarieties are the previously considered subvarieties $\mathcal{X}_{l_0}$ and $\mathcal{X}_{p_0}$. The tangent directions to $\mathcal{Z}_{l_0}$ not tangent to any of these two subvarieties form again a unique orbit for the group $G_{l_0}$. From what we have just proved, it follows that the differential of $\Phi$ at $(l_0,p_0)$ has no non-trivial zero on the tangent space  $T_{(l_0,p_0)}(\mathcal{Z}_{l_0})$. 

Finally, consider the full tangent space $W$. Its maximal invariant subspaces are $T_{(l_0,p_0)}(\mathcal{Z}_{l_0})$ and $T_{(l_0,p_0)}(\mathcal{Y})$, where we have proved that the differential has no non-trivial zero. Once again, the other tangent directions form a unique orbit for the group $G_{l_0,p_0}$, so that the differential cannot have any non-trivial zero at all.

This concludes the proof of Proposition \ref{equivariante} and of Proposition \ref{applicazione}.
\end{proof}

\begin{thm}\label{veryampleness}  If $X$ is a general quartic then the canonical sheaf of $S$ is very ample.
\end{thm}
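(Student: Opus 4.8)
Here is the strategy I would follow. The plan is to realize the canonical map of $S$ as the composite of the embedding $\Phi$ of Proposition \ref{applicazione} with a natural section of the bundle $\pi_3\colon \mP({\rm{Sym}}^2(\sQ))\to\mathbb G$ restricted over $S$, and then to read off very ampleness from the fact that a composite of closed immersions is a closed immersion.

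First I would produce the section. Using $2\sigma=0$ together with Corollary \ref{wedge2sim2}, the middle term of the wedge sequence (\ref{wedgesimsigma}) is identified with
\[
\bigwedge^2\bigl({\rm{Sym}}^2(\sQ_S)\otimes\sO_S(\sigma)\bigr)=\bigwedge^2{\rm{Sym}}^2(\sQ_S)={\rm{Sym}}^2(\sQ_S)\otimes\sO_S(H_{\mathbb G|S}),
\]
so that (\ref{wedgesimsigma}) becomes a surjection ${\rm{Sym}}^2(\sQ_S)\otimes\sO_S(H_{\mathbb G|S})\twoheadrightarrow\omega_S$ onto a line bundle. Being a surjection of locally free sheaves, it is surjective at every point, and by the universal property of the projective bundle it defines an everywhere-defined section $s\colon S\to\mP({\rm{Sym}}^2(\sQ_S))$ of $\pi_3|_S$; as a section of a projection, $s$ is a closed immersion. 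Untwisting by $\sO_S(H_{\mathbb G|S})$ and using $K_S=3H_{\mathbb G|S}+\sigma$ from Theorem \ref{formulae}, I would record
\[
s^{\star}\sO_{\mP({\rm{Sym}}^2(\sQ_S))}(1)=\omega_S\otimes\sO_S(-H_{\mathbb G|S})=\sO_S(2H_{\mathbb G|S}+\sigma),
\]
where $\sO_{\mP({\rm{Sym}}^2(\sQ_S))}(1)$ denotes the relative tautological bundle, normalized so that $\pi_{3\star}$ of it is ${\rm{Sym}}^2(\sQ_S)$.

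Next I would identify the line bundle $\Phi^{\star}\sO(1)$. Since $\Phi$ sends $([l],P+P')$ to $v_2(P)\wedge v_2(P')$, with $v_2$ the $2$-uple (Veronese) embedding pulling $\sO(1)$ back to $\sO_{\mP^3}(2)$, a direct computation on a single fibre $\mP^2=\mP({\rm{Sym}}^2(\sQ_l))$ shows that $\Phi$ restricts there to a linear isomorphism onto a plane of $\mP(\bigwedge^2{\rm{Sym}}^2V)$: after factoring out the antisymmetric resultant $ad-bc$ of the two points, the remaining coordinates depend linearly on the divisor $P+P'$. Hence $\Phi^{\star}\sO(1)$ has relative degree one. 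The resultant contributes a factor of $\sO_{\mathbb G}(-1)$ via $\bigwedge^2\sQ^\vee=\sO_{\mathbb G}(-1)$ from Lemma \ref{ARRONDOondo}, which forces the global twist
\[
\Phi^{\star}\sO(1)=\sO_{\mP({\rm{Sym}}^2(\sQ))}(1)\otimes\pi_3^{\star}\sO_{\mathbb G}(H_{\mathbb G}).
\]
Setting $\psi:=\Phi\circ s\colon S\to\mP(\bigwedge^2{\rm{Sym}}^2V)$ and combining the two computations gives
\[
\psi^{\star}\sO(1)=s^{\star}\bigl(\sO_{\mP({\rm{Sym}}^2(\sQ_S))}(1)\otimes\pi_3^{\star}\sO_S(H_{\mathbb G|S})\bigr)=\sO_S(2H_{\mathbb G|S}+\sigma)\otimes\sO_S(H_{\mathbb G|S})=\sO_S(K_S).
\]

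Finally I would conclude. The restriction of $\Phi$ to $\pi_3^{-1}(S)=\mP({\rm{Sym}}^2(\sQ_S))$ is a closed immersion by Proposition \ref{applicazione}, and $s$ is a closed immersion, so $\psi=\Phi\circ s$ is a closed immersion. Pulling back linear forms on the target through $\psi$ yields a subspace $W\subseteq H^0(S,\omega_S)$ with $\psi=\phi_{|W|}$; since $\phi_{|W|}$ is a closed immersion, $|W|$ is base-point free and hence so is $|K_S|$, and in the factorization $\phi_{|W|}=\pi\circ\phi_{|K_S|}$ through a linear projection the injectivity of $\phi_{|W|}$ and of its differential force the same for $\phi_{|K_S|}$. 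Thus $\phi_{|K_S|}$ is an injective immersion of a complete surface, i.e. a closed immersion, and $\omega_S$ is very ample. I expect the main obstacle to be the precise bookkeeping of $\Phi^{\star}\sO(1)$, namely isolating the twist $\pi_3^{\star}\sO_{\mathbb G}(H_{\mathbb G})$ through the antisymmetrization defining $\Phi$; once this is matched against $K_S=3H_{\mathbb G|S}+\sigma$, very ampleness becomes a formal consequence of Proposition \ref{applicazione}.
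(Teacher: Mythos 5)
Your proof is correct, and its skeleton is the same as the paper's: both factor the canonical map through the tautological section of $\mP({\rm{Sym}}^2(\sQ_S))\to S$ followed by the equivariant embedding $\Phi$ of Proposition \ref{applicazione}. (Your section $s$, defined by the surjection ${\rm{Sym}}^2(\sQ_S)\otimes\sO_S(H_{\mathbb G|S})\twoheadrightarrow\omega_S$ of (\ref{wedgesimsigmasigma}), coincides with the paper's $f_\sigma$, defined by $({\rm{Sym}}^2(\sQ_S))^{\vee}\otimes\sO_S(\sigma)\twoheadrightarrow\sO_S$: the two quotients correspond under $\bigwedge^2 E\cong E^{\vee}\otimes\det E$ with $E={\rm{Sym}}^2(\sQ_S)\otimes\sO_S(\sigma)$ and $\det E=\omega_S$.) Where you genuinely diverge is in identifying the composite with the canonical system. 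The paper invokes Welters' (A.5) Proposition, which states outright that this composite \emph{is} the canonical map of $S$, so very ampleness follows at once from the two embeddings. You avoid that citation: you compute $\Phi^{\star}\sO(1)=\sO_{\mP({\rm{Sym}}^2(\sQ))}(1)\otimes\pi_3^{\star}\sO_{\mathbb G}(H_{\mathbb G})$ and $s^{\star}\sO_{\mP({\rm{Sym}}^2(\sQ_S))}(1)=\sO_S(2H_{\mathbb G|S}+\sigma)$, match against $K_S=3H_{\mathbb G|S}+\sigma$ from Theorem \ref{formulae}, and pass from the resulting subsystem $W\subseteq H^0(S,\omega_S)$ to the complete system by the standard projection argument. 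Your twist bookkeeping is in fact correct: the wedge $v_2(P)\wedge v_2(P')$ takes values in the square of the tautological bundle, the antisymmetric resultant factor carries exactly one factor of $\bigwedge^2\sQ^{\vee}=\sO_{\mathbb G}(-1)$, and dividing it out leaves the fibrewise-linear map with the stated twist; as written this is a sketch, but it can be made rigorous along exactly those lines, or by noting that $\Phi^{\star}\sO(1)$ is the determinant of the rank-two evaluation bundle on $\mathcal{H}ilb_2(\mP^3)$, computed from $\bigwedge^3{\rm{Sym}}^2(\sQ)=\sO_{\mathbb G}(3)$ and the fact that the discriminant divisor is cut by a section of the square of the tautological bundle twisted by $\pi_3^{\star}\sO_{\mathbb G}(-2H_{\mathbb G})$. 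The trade-off is clear: the paper's proof is shorter but rests on a specific external identification due to Welters; yours is self-contained modulo Proposition \ref{applicazione} and Theorem \ref{formulae}, and has the additional merit that it never needs to know which point of the fibre the section picks out, only which line bundles pull back.
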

\begin{proof} By \cite[ (A.5) Proposition p.53]{W} we know that the canonical map of $S$ is given by $S\to  \mP(\bigwedge^2{\rm{Sym}}^2V^{} ) \to \mathbb G(2, ({\rm{Sym}}^2V)^{} )$, where the last map is Pl\"ucker embedding and the fisrt one is defined as  follows 
\begin{equation*}
l \mapsto [{\rm{Ann}}(\{q\in  {\rm{Sym}}^2V^{\vee})\mid Z(q_{\mid l})=l_{|X}\}]
\end{equation*}
(here we used the symbol $Z(\cdot)$ to denote the zero set of a function, or a section of a bundle).
 Now the basic remark is that by the identity $({\rm{Sym}}^2(\sQ_S))^{\vee}=({\rm{Sym}}^2(\sQ_S))\otimes_{\sO_{\mathbb S}}\sO_{S}(-jH_{\mathbb G|S})$ we have that $\mP(({\rm{Sym}}^2(\sQ_S))^{\vee})\simeq \mP({\rm{Sym}}^2(\sQ_S))$. The surjective morphism ${\rm{Sym}}^2(\sQ_S))^{\vee})\otimes_{\sO_S}\sO_S(\sigma)\to \sO_S$ given by the tangent sequence induces a section $$f_{\sigma}\colon S\hookrightarrow \mP({\rm{Sym}}^2(\sQ_S))^{\vee}\otimes_{\sO_S}\sO_S(\sigma))=\mP({\rm{Sym}}^2(\sQ_S))$$
which composed with the embedding $\mP({\rm{Sym}}^2(\sQ_S))\hookrightarrow  \mP( {\rm{Sym}}^2(\sQ) )$ and the embedding $\Phi\colon \mP( {\rm{Sym}}^2(\sQ) )\to \mP(\bigwedge^2 {\rm{Sym}}^2V^{} )$ of Proposition \ref{applicazione} shows that the canonical map of $S$ is an embedding.
\end{proof}

\begin{thm}\label{teoremaA} The surface $S\subset\mathbb G$ which parametrises the bitangent lines to a general quartic $X\subset\mP^3$ is smooth, regular, of general type, it contains no rational curve, it does not satisfie iTp  and its canonical map is an embedding.
\end{thm}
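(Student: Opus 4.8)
The plan is to recognize that Theorem \ref{teoremaA} is a summary statement whose every assertion has already been established in the preceding sections; accordingly, the proof amounts to assembling the relevant results and citing them in turn, with no genuinely new argument required. I would organize the citations property by property, in the order the properties are listed in the statement.

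First, smoothness of $S$ is exactly the content of Proposition \ref{lisciezza}, proved by the direct and elementary tangent-space computation at an arbitrary bitangent line (distinguishing the cases $\lambda\neq 0$ and $\lambda=0$, where the relevant linear system drops rank only at a singular point of $X$, which cannot occur by hypothesis). Regularity, i.e. $q(S)=0$, and the fact that $S$ is of general type both follow from the table of numerical invariants in Theorem \ref{formulae}(2), which records $q(S)=0$, $p_g(S)=45$ and $K_S=3H_{\mathbb G|S}+\sigma$; since $H_{\mathbb G|S}$ is ample, $K_S$ is ample, so $S$ is of general type. The absence of rational curves on $S$ is Theorem \ref{norational}, which in turn rests on the injectivity of the differential of the Albanese morphism of the \'etale double cover $S_X$ (Theorem \ref{collino}).

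Next, the failure of the infinitesimal Torelli property is already packaged in Theorem \ref{primooo}, itself a consequence of Theorem \ref{infty} and Theorem \ref{regularcase}: the extension class $\xi\in H^1(S,T_S)$ of the tangent bundle sequence (\ref{simsigma}) is nonzero (Theorem \ref{infty}), while $\partial^{(1)}_{\xi}=0$ trivially because $q(S)=0$ and $\partial^{(2)}_{\xi}=0$ by the dimension count $h^0(S,\bigwedge^2\mathrm{Sym}^2(\sQ_S))\geq 45=p_g(S)$ carried out in Theorem \ref{regularcase}. Thus a nonzero class is annihilated by all the coboundary maps $\partial^{(i)}_{\xi}$, which is precisely the negation of iTp.

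Finally, that the canonical map of $S$ is an embedding is Theorem \ref{veryampleness}, whose proof factors the canonical morphism through the section $f_\sigma\colon S\hookrightarrow\mP((\mathrm{Sym}^2(\sQ_S))^\vee\otimes\sO_S(\sigma))\simeq\mP(\mathrm{Sym}^2(\sQ_S))$ and the $G$-equivariant embedding $\Phi\colon\mP(\mathrm{Sym}^2(\sQ))\to\mP(\bigwedge^2\mathrm{Sym}^2V)$ of Proposition \ref{applicazione}. I would therefore simply conclude: the theorem follows from Proposition \ref{lisciezza}, Theorem \ref{formulae}, Theorem \ref{norational}, Theorem \ref{primooo} and Theorem \ref{veryampleness}. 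There is no real obstacle at this stage, since all the difficulty has been absorbed into the earlier results; the only substantive input among them — and the one I would flag as the true heart of the whole program — is the very ampleness of $K_S$, which relies on the delicate equivariant-differential argument of Proposition \ref{equivariante} reducing nonvanishing of $d\Phi$ along the closed orbit $\mathcal{Y}$ to the $\Ga$-equivariant Lemma \ref{P^1-equivariante}.
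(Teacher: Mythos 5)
Your proposal is correct and follows essentially the same route as the paper, which proves Theorem \ref{teoremaA} by citing Theorem \ref{primooo} (itself assembled from Proposition \ref{lisciezza}, Theorem \ref{formulae}, Theorem \ref{norational}, Theorem \ref{infty} and Theorem \ref{regularcase}) together with Theorem \ref{veryampleness}. Your version merely unpacks Theorem \ref{primooo} into its constituent results and adds the (correct) observation that $K_S=3H_{\mathbb G|S}+\sigma$ is ample, so nothing differs in substance.
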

\begin{proof} It follows by Theorem \ref{primooo} and Theorem \ref{veryampleness}.
\end{proof}

\section{The canonical image of the surfaces of bitangents}
We study the canonical image of $S$, but since the canonical map is an embedding we do not distinguish here between $S$ and its canonical image. In particular we show that
the standard multiplicative map
\begin{equation}\label{standmultmap}
\mu\colon H^0(S,\omega_S)\otimes_\mathbb C H^0(S,\omega_S)\to H^0(S,\omega^{\otimes 2}_S)
\end{equation}
is not surjective.

\subsection{Ample sheaves on $S$}

\begin{lem}\label{finitezza} The morphism $\rho_{\mP^3}\circ j_{\sQ_S}\colon\mP(\sQ_S)\to\mP^3$ is finite of degree $12$
\end{lem}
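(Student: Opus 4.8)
The plan is to compute the fibres of $g:=\rho_{\mP^3}\circ j_{\sQ_S}$ geometrically, to read off the degree by an intersection on $\mathbb{G}$, and to treat properness and quasi-finiteness separately.

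First I would make the fibres explicit. Since $\mP(\sQ)\cong\{(p,[l])\in\mP^3\times\mathbb{G}\mid p\in l\}$ is the incidence variety and $\rho_{\mP^3}$ is the first projection, one has $\mP(\sQ_S)=\{(p,[l])\mid[l]\in S,\ p\in l\}$ and $g(p,[l])=p$. Hence $\pi_S$ carries the fibre $g^{-1}(p)$ isomorphically onto
$$\{[l]\in S\mid p\in l\}=S\cap\Sigma_p,$$
where $\Sigma_p:=\{[l]\in\mathbb{G}\mid p\in l\}\cong\mP^2$ is the Schubert variety of class $\sigma_p\in\mathrm{CH}^2(\mathbb{G})$. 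As $\mP(\sQ_S)$ is a $\mP^1$-bundle over the projective surface $S$ it is projective, so $g$ is proper; consequently $g$ will be finite as soon as every intersection $S\cap\Sigma_p$ is zero-dimensional.

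The degree I would obtain from Lemma \ref{chowclass} together with the standard relations $\sigma_p^2=1$ and $\sigma_h\cdot\sigma_p=0$ in the Chow ring of $\mathbb{G}$:
$$\deg g=[S]\cdot\sigma_p=(28\sigma_h+12\sigma_p)\cdot\sigma_p=12.$$
Thus a general point of $\mP^3$ lies on exactly twelve bitangent lines, consistently with $\deg S=[S]\cdot(\sigma_h+\sigma_p)=40$.

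The only delicate point, and the one I expect to be the main obstacle, is finiteness: one must rule out a point $p_0$ lying on a one-parameter family of bitangent lines. Such a family is a curve $C\subset S\cap\Sigma_{p_0}$, and the lines it parametrises sweep out a cone $\Sigma$ with vertex $p_0$. I would exploit that the tangent plane of a cone is constant along each ruling: writing $a_l,b_l$ for the two contact points of $l\in C$ with $X$ and using the two moving curves $\{a_l\},\{b_l\}\subset X$ as directrices, one finds $T_{a_l}X=T_{a_l}\Sigma=T_{b_l}\Sigma=T_{b_l}X$, so that the Gauss map of $X$ identifies $a_l$ with $b_l$ all along $C$. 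The plan is then to contradict the generality of $X$ (smooth, containing no line): concretely, projecting from $p_0\notin X$ yields a finite degree-$4$ cover $X\to\mP^2$ whose branch curve has degree $12$ and, for general $X$, is reduced, so its nodes — which are exactly the bitangent lines through $p_0$ — are finite in number; the case $p_0\in X$ is handled by the analogous local analysis of the lines through $p_0$ contained in $T_{p_0}X$. The hard part is precisely establishing this reducedness (equivalently, excluding the coincident-tangent-plane configuration above) for every $p_0$; once it is in place, properness upgrades quasi-finiteness to finiteness and the degree is the $12$ computed above.
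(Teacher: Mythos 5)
Your fibre description, the properness remark, and the degree computation $[S]\cdot\sigma_p=(28\sigma_h+12\sigma_p)\cdot\sigma_p=12$ are all correct and coincide with what the paper does via Lemma \ref{chowclass}. The problem is that the entire content of the lemma beyond these formal steps is quasi-finiteness, and that is exactly the step you do not prove. You reduce it to the reducedness of the degree-$12$ branch curve of the projection $X\to\mP^2$ from $p_0$, assert this ``for general $X$'', and then concede that the hard part is establishing it for every $p_0$. This is a genuine gap, for two reasons. First, ``general'' in this paper means only that $X$ is smooth and contains no line, so a statement known for a generic quartic cannot simply be invoked; it must be derived from these hypotheses. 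Second, and more seriously, even for a very general $X$ the reducedness would have to be proved for \emph{every} centre of projection $p_0$ --- special points are precisely the candidates for positive-dimensional fibres --- and your Gauss-map observation (that the two contact points share their tangent plane along the family) is never pushed to a contradiction.

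For comparison, the paper closes this hole as follows: if infinitely many bitangents pass through $p$, then the restriction to $X$ of the polar cubic ${\rm Pol}_p(X)$ --- your ramification curve, a member of $|3h|$ --- is non-reduced, hence of the form $2D+A$ with $D$ effective; since ${\rm Pic}(X)=\mathbb{Z}[h]$, the surface $X$ carries no curves of degree $\le 3$, so necessarily $D\sim h$ is a plane section; but then the bitangents through $p$ form the pencil of lines through $p$ inside that plane, i.e. a rational curve inside $S$, contradicting Theorem \ref{norational} (which in turn rests on Welters' injectivity result, Theorem \ref{collino}). So the missing exclusion is supplied by a Picard-group argument combined with the no-rational-curves theorem, an input your proposal never invokes. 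Until an argument of this kind is in place, your proof establishes the degree of the morphism but not its finiteness, which is the actual assertion of the lemma.
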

\begin{proof} By contradiction. Let $p\in\mP^3$ such that the $\rho_{\mP^3}\circ j_{\sQ_S}$-fiber is of positive dimension. This means that there are infinitely many  bitangent lines through $p$. Then the polar cubic ${\rm{Pol}}_{p}(X)$ has at least a component swept by lines through $p$. The restriction of ${\rm{Pol}}_{p}(X)$ to $X$ is non reduced. This implies that this restriction is a divisor of type $2D+A$. Since we are assuming that ${\rm{Pic}}(X)=[H_{ \mP^{3}_{\mid X} }] \mathbb Z$, $X$ does not contain curves of degree $\leq 3$. Then the only possibility is that $D$ is an hyperplane section. This  implies that $S$ contains the rational curve which parametrises the pencil of lines contained in a plane and passing through $p$. By Theorem \ref{norational} this is a contradiction. Finally by Lemma \ref{chowclass} it follows that:
$$
{\rm{deg}}(\rho_{\mP^3}\circ j_{\sQ_S})=[S]\cdot\sigma_p=(28\sigma_h+12\sigma_p)\cdot\sigma_p=12.
$$

\end{proof}
\begin{cor} For every $j\geq 1$ the sheaves ${\rm{Sym}}^j(\sQ_S)$ are ample.
\end{cor}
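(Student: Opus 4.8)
The plan is to deduce the ampleness of all the symmetric powers from the ampleness of the single rank-$2$ bundle $\sQ_S$, using the finite morphism just produced in Lemma \ref{finitezza}. Recall that a symmetric power of an ample vector bundle is again ample; hence it suffices to prove that $\sQ_S$ itself is ample, and then the whole family ${\rm{Sym}}^j(\sQ_S)$, $j\geq 1$, follows at once.

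To prove that $\sQ_S$ is ample I would appeal to the standard criterion, read in the projectivisation convention fixed in Section \ref{notazioni} (Grothendieck's): $\sQ_S$ is ample if and only if its tautological line bundle $\sO_{\mP(\sQ_S)}(R)$ is ample on the projective bundle $\pi_S\colon\mP(\sQ_S)\to S$, where $R$ is characterised by $\pi_{S\star}\sO_{\mP(\sQ_S)}(R)=\sQ_S$, see (\ref{eillepullabacche}). Thus the task reduces to showing that the single line bundle $\sO_{\mP(\sQ_S)}(R)$ is ample.

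Here is where Lemma \ref{finitezza} enters. Since $R=\rho_{\mP^3}^{\star}H_{\mP^3}$, its restriction to $\mP(\sQ_S)$ equals the pullback $(\rho_{\mP^3}\circ j_{\sQ_S})^{\star}H_{\mP^3}$; equivalently, $\sO_{\mP(\sQ_S)}(R)$ is the pullback of the ample class $H_{\mP^3}$ along the morphism $\rho_{\mP^3}\circ j_{\sQ_S}\colon\mP(\sQ_S)\to\mP^3$. By Lemma \ref{finitezza} this morphism is finite (of degree $12$), and the pullback of an ample line bundle under a finite morphism is ample. Hence $\sO_{\mP(\sQ_S)}(R)$ is ample, so $\sQ_S$ is ample, and with it every ${\rm{Sym}}^j(\sQ_S)$, $j\geq 1$.

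As for the difficulty: essentially all the geometric content has already been spent in Lemma \ref{finitezza} (the absence of a positive-dimensional family of bitangents through a point, which itself rests on $\mathrm{Pic}(X)=\mathbb Z h$ and on the non-existence of rational curves in $S$ from Theorem \ref{norational}). The only genuine care needed in the present argument is bookkeeping: matching the tautological divisor $R$ on $\mP(\sQ_S)$ with the pullback of $H_{\mP^3}$ along the correct composite, and invoking the two standard facts — preservation of ampleness under finite pullbacks, and under passage to symmetric powers — consistently with the convention $\pi_{S\star}\sO_{\mP(\sQ_S)}(R)=\sQ_S$.
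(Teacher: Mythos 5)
Your proposal is correct and follows essentially the same route as the paper: reduce to the ampleness of $\sQ_S$ itself via the fact (valid in characteristic zero, cited in the paper from Hartshorne's \emph{Ample Vector Bundles}) that symmetric powers of ample bundles are ample, then note that the tautological class $R$ on $\mP(\sQ_S)$ is the pullback of $H_{\mP^3}$ under the morphism $\rho_{\mP^3}\circ j_{\sQ_S}$, which is finite by Lemma \ref{finitezza}, so $\sO_{\mP(\sQ_S)}(R)$ and hence $\sQ_S$ are ample. The only cosmetic difference is that you spell out the criterion ``$\sQ_S$ ample iff $\sO_{\mP(\sQ_S)}(R)$ ample'' that the paper leaves implicit, and you should note that the symmetric-power fact genuinely requires characteristic zero, exactly as the paper does.
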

\begin{proof} Since we work in characteristic zero, by \cite[Corollary 5.3 page 77]{H} we only need to show that $\sQ_S$ is ample. The tautological sheaf $R$ of $\mP(\sQ_S)$ is the pull-back of the hyperplane section $H_{\mP^3}$ by the morphism $\rho_{\mP^3}\circ j_{\sQ_S}\colon\mP(\sQ_S)\to\mP^3$, see Lemma \ref{relativeonX} and the discussion above it. By Lemma \ref{finitezza} $\rho_{\mP^3}\circ j_{\sQ_S}$ is a finite morphism. Hence the claim follows.
\end{proof}
\begin{rem} We observe that over the Grassmannian $\mathbb G$ the bundle $\sQ$ is only generated but not ample.
\end{rem}

\begin{cor}\label{farezero} The sheaf ${\rm{Sym}}^2(\sQ_S)\otimes_{\sO_S}\sO_S(1)$ is Nakano positive. In particular
$$
H^1(S, {\rm{Sym}}^2(\sQ_S)\otimes_{\sO_S}\sO_S(1)\otimes_{\sO_S}\omega_S)=0.
$$
\end{cor}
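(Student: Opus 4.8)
The plan is to obtain the asserted Nakano positivity from the shape of the bundle and then read off the cohomological statement from a vanishing theorem. The first observation is that, by Lemma \ref{ARRONDOondo} $(1)$, one has $\sO_S(1)=\bigwedge^2\sQ_S=\det\sQ_S$, so that
$$
{\rm{Sym}}^2(\sQ_S)\otimes_{\sO_S}\sO_S(1)={\rm{Sym}}^2(\sQ_S)\otimes_{\sO_S}\det\sQ_S.
$$
This is exactly a bundle of the form ${\rm{Sym}}^k F\otimes\det F$ with $F=\sQ_S$ and $k=2$, which is the class of bundles governed by the Griffiths--Demailly--Skoda positivity theorem: if $F$ is Griffiths positive of rank $r$, then ${\rm{Sym}}^k F\otimes\det F$ is Nakano positive for every $k\geq 0$.

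To feed the theorem I would use that $\sQ_S$ is ample, which is the content of the preceding Corollary and rests on the finiteness of $\rho_{\mP^3}\circ j_{\sQ_S}\colon\mP(\sQ_S)\to\mP^3$ established in Lemma \ref{finitezza}. Granting that this ampleness can be realised by a metric of strictly positive Griffiths curvature on $\sQ_S$, the theorem yields at once that ${\rm{Sym}}^2(\sQ_S)\otimes\det\sQ_S={\rm{Sym}}^2(\sQ_S)\otimes\sO_S(1)$ is Nakano positive. For the final assertion I would then invoke the Akizuki--Nakano vanishing theorem: a Nakano positive bundle $E$ on a compact complex manifold satisfies $H^q(\,\cdot\,,\omega\otimes E)=0$ for $q\geq 1$; applied to the surface $S$ (of dimension $2$) with $E={\rm{Sym}}^2(\sQ_S)\otimes\sO_S(1)$ it gives $H^1(S,{\rm{Sym}}^2(\sQ_S)\otimes\sO_S(1)\otimes\omega_S)=0$, as required.

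The main obstacle is this very analytic input: ampleness of $\sQ_S$ in the algebraic sense is not formally known to imply Griffiths positivity, and the natural quotient metric induced by the global generation $V\otimes\sO_S\to\sQ_S$ (restriction of the universal quotient on $\mathbb G$) only produces the pull-back $g^\star\omega_{FS}$ of the Fubini--Study form along $g=\rho_{\mP^3}\circ j_{\sQ_S}$, which degenerates along the ramification of $g$; so obtaining a strictly Griffiths positive metric requires ruling out that degeneration.

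For this reason I would secure the cohomological statement by a route that uses ampleness alone, bypassing the Griffiths positivity issue. On the threefold $\mP(\sQ_S)$ the tautological divisor $R=g^\star H_{\mP^3}$ is ample by Lemma \ref{finitezza}, and a short computation with the relative dualizing sheaf $\omega_{\mP(\sQ_S)/S}=\sO(-2R)\otimes\pi_S^\star\sO_S(1)$ identifies the line bundle $\sO(2R)\otimes\pi_S^\star(\omega_S\otimes\sO_S(1))$ with $K_{\mP(\sQ_S)}\otimes\sO(4R)$. Kodaira vanishing on $\mP(\sQ_S)$ kills the higher cohomology of this line bundle, and since $R^i\pi_{S\star}\sO(2R)=0$ for $i\geq 1$ while $\pi_{S\star}\sO(2R)={\rm{Sym}}^2(\sQ_S)$, the projection formula and the Leray spectral sequence return $H^1(S,\omega_S\otimes{\rm{Sym}}^2(\sQ_S)\otimes\sO_S(1))=0$. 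This delivers the ``in particular'' part independently of the positivity question, while the Nakano positivity statement itself remains to be underpinned by the metric analysis flagged above.
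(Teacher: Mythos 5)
Your proposal splits into two halves, and they fare differently. The half that works is your fallback argument: the identification $\omega_{\mP(\sQ_S)/S}=\sO(-2R)\otimes\pi_S^{\star}\sO_S(1)$, the resulting equality $\sO(2R)\otimes\pi_S^{\star}(\omega_S\otimes\sO_S(1))=K_{\mP(\sQ_S)}\otimes\sO(4R)$, the ampleness of $R=g^{\star}H_{\mP^3}$ via Lemma \ref{finitezza}, and the Kodaira--Leray descent are all correct under the paper's conventions. This is genuinely different from the paper's proof: it is the classical algebraic proof of Griffiths' vanishing theorem ($E$ ample implies $H^q(K\otimes S^kE\otimes\det E)=0$) specialised to $E=\sQ_S$, and it is more elementary in that it needs no metric input at all. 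Since the only downstream use of Corollary \ref{farezero} (in Theorem \ref{nondue}) is through the vanishing statement, your route would actually suffice for the paper's purposes.

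The gap is that the headline assertion of the corollary --- Nakano positivity of ${\rm{Sym}}^2(\sQ_S)\otimes\sO_S(1)$ --- is left unproven, and the missing idea is precisely the one the paper uses. The paper's entire proof is the identification $\sO_S(1)=\bigwedge^2\sQ_S=\det\sQ_S$ followed by a citation of \cite[Proposition 1.5]{LY}, which is a Demailly--Skoda/Berndtsson type theorem engineered to avoid the very obstruction you flag: it does not ask for Griffiths positivity of $\sQ_S$ (which, as you say, does not formally follow from ampleness), but only for hypotheses that hold here for structural reasons, namely global generation of $\sQ_S$ (it is a quotient of $V\otimes\sO_S$, restriction of the universal quotient) together with the available strict positivity --- ampleness of $\sQ_S$ from the corollary preceding \ref{farezero}, and positivity of $\det\sQ_S=\sO_S(1)$, the Pl\"ucker polarisation. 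Note that the degeneration you worry about, namely that the quotient metric's curvature degenerates along the ramification of $g=\rho_{\mP^3}\circ j_{\sQ_S}$, obstructs strict \emph{Griffiths} positivity of $\sQ_S$ but not strict positivity of $\det\sQ_S$: the induced metric on $\det\sQ_S$ is the restriction to $S$ of the Fubini--Study metric of the Pl\"ucker embedding, and a positive $(1,1)$-form restricted to a complex submanifold stays positive. Liu--Yang's refinement converts Griffiths semi-positivity (from global generation) plus this strict determinant positivity into strict Nakano positivity of $S^k\sQ_S\otimes\det\sQ_S$. So your proposal, as written, proves the ``in particular'' clause by a valid independent method but does not prove the corollary as stated; to close it you would cite \cite{LY} rather than carry out the metric analysis you defer.
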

\begin{proof} Since $\sO_S(1)=\bigwedge^2\sQ_S$ the claim follows straightly by \cite[Proposition 1.5]{LY}.
\end{proof}

\subsection{The canonical image is not $2$-normal}
We consider again the sequence (\ref{wedgesimsigma}), but since by Corollary \ref{wedge2sim2} 
\begin{equation}\label{twistachetipassa}
\bigwedge^2\bigl ({\rm{Sym}}^2(\sQ_S)\otimes\sO_S(\sigma)\bigr )=\bigwedge^2{\rm{Sym}}^2(\sQ_S)\simeq {\rm{Sym}}^2(\sQ_S)\otimes_{\sO_S}\sO_S(1).
\end{equation}
we can write it in the following form:

\begin{equation}\label{wedgesimsigmasigma}
0\to \Omega^1_S\to {\rm{Sym}}^2(\sQ_S)\otimes_{\sO_S}\sO_S(1)\to\omega_S\to 0
\end{equation}

We tensor  it by $\otimes_{\sO_S}\omega_S$ and by a suitable twist of the dual Euler sequence induced by 
the canonical embedding $\phi_{|K_S|} \colon S \to \mP^{44}$ we can built the following exact diagram:

\begin{equation*}
\xymatrix { 
&&&0\ar[d]\\
&&&
\phi_{|K_S|}^{\star}(\Omega^1_{\mP^{44}}(1))\ar[r]\ar[d]&0\\
0\ar[r]&\Omega^1_S\otimes H^0(S,\omega_S)\ar[r]\ar[d]^-{ {\rm{id_{\Omega^1_S}}}\otimes{\rm{ev}}}&{\rm{Sym}}^2(\sQ_S)\otimes_{\sO_S}\sO_S(1)\otimes H^0(S,\omega_S)\ar[r]^-{\rho\otimes {\rm{id}}}\ar[d]^-{{\rm{id_{}}}\otimes{\rm{ev}}}&\omega_S\otimes H^0(S,\omega_S)\ar[r]\ar[d]^-{{\rm{id_{\omega_S}}}\otimes{\rm{ev}}}&0\\
0\ar[r]&\Omega^1_S\otimes\omega_S\ar[r]\ar[d]&{\rm{Sym}}^2(\sQ_S)\otimes_{\sO_S}\omega_S(1)\ar[r]^-{\nu}\ar[d]&\omega_{S}^{\otimes 2}\ar[r]\ar[d]&0\\
&0&0&0
}
\end{equation*}
We observe that the vertical maps are surjective since $\omega_S$ is generated. 

Moreover it is well-known that for the map $\mu$ recalled in diagram (\ref{standmultmap}) it holds that:

$$\mu= H^0({\rm{id_{\omega_S}}}\otimes{\rm{ev}})\colon H^0(S,\omega_S\otimes H^0(S,\omega_S))\to H^0(S,\omega_{S}^{\otimes 2}).$$
\begin{thm}\label{nondue} The cokernel of $\mu$ has dimension at least equal to $h^1(S,T_S)$. In particular the canonical image of $S$ is not $2$-normal.
\end{thm}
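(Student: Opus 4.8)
The plan is to extract $\dim{\rm coker}(\mu)$ directly from the cohomology of the displayed commutative diagram, the two decisive inputs being the vanishing $\partial^{(2)}_\xi=0$ (Theorem \ref{regularcase}) and the Nakano-type vanishing $H^1(S,{\rm{Sym}}^2(\sQ_S)\otimes_{\sO_S}\sO_S(1)\otimes_{\sO_S}\omega_S)=0$ (Corollary \ref{farezero}). Write $E:={\rm{Sym}}^2(\sQ_S)\otimes_{\sO_S}\sO_S(1)$ and let $\rho\colon E\to\omega_S$ be the surjection of the sequence (\ref{wedgesimsigmasigma}). First I would note that the long exact sequence of (\ref{wedgesimsigmasigma}) reads $0\to H^0(S,\Omega^1_S)\to H^0(S,E)\xrightarrow{H^0(\rho)} H^0(S,\omega_S)\xrightarrow{\partial^{(2)}_\xi} H^1(S,\Omega^1_S)$; since $q(S)=0$ gives $H^0(S,\Omega^1_S)=0$ and $\partial^{(2)}_\xi=0$, the map $H^0(\rho)$ is an isomorphism.

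The second step uses commutativity of the right-hand square of the diagram, whose horizontal arrows are $\rho\otimes{\rm{id}}$ and $\nu=\rho\otimes{\rm{id}}_{\omega_S}$ and whose verticals are evaluation maps; on global sections it gives $\mu\circ(H^0(\rho)\otimes{\rm{id}})=H^0(\nu)\circ H^0({\rm{id}}_E\otimes{\rm{ev}})$, where I have used $\mu=H^0({\rm{id}}_{\omega_S}\otimes{\rm{ev}})$. Since $H^0(\rho)\otimes{\rm{id}}$ is an isomorphism of vector spaces by the first step, this forces $\Ima\mu\subseteq\Ima H^0(\nu)$ inside $H^0(S,\omega_S^{\otimes2})$, whence the natural surjection ${\rm coker}(\mu)\twoheadrightarrow{\rm coker}(H^0(\nu))$ and the inequality $\dim{\rm coker}(\mu)\geq\dim{\rm coker}(H^0(\nu))$.

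It then remains to identify ${\rm coker}(H^0(\nu))$. From the long exact sequence of the bottom row $0\to\Omega^1_S\otimes\omega_S\to E\otimes\omega_S\xrightarrow{\nu}\omega_S^{\otimes2}\to 0$ the connecting map identifies ${\rm coker}(H^0(\nu))$ with $\Ker\bigl(H^1(S,\Omega^1_S\otimes\omega_S)\to H^1(S,E\otimes\omega_S)\bigr)$. Now $E\otimes\omega_S={\rm{Sym}}^2(\sQ_S)\otimes_{\sO_S}\sO_S(1)\otimes_{\sO_S}\omega_S$, so $H^1(S,E\otimes\omega_S)=0$ by Corollary \ref{farezero}, and thus ${\rm coker}(H^0(\nu))=H^1(S,\Omega^1_S\otimes\omega_S)$ in full. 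Finally Serre duality on $S$ yields $h^1(S,\Omega^1_S\otimes\omega_S)=h^1(S,(\Omega^1_S\otimes\omega_S)^\vee\otimes\omega_S)=h^1(S,T_S)$, using $(\Omega^1_S\otimes\omega_S)^\vee\otimes\omega_S=(\Omega^1_S)^\vee=T_S$. Combining the three steps gives $\dim{\rm coker}(\mu)\geq h^1(S,T_S)$; and since $\xi\neq0$ (Theorem \ref{infty}) we have $h^1(S,T_S)\neq0$, so $\mu$ is not surjective and the canonical image is not $2$-normal.

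The main obstacle is really one of placing the two vanishing statements correctly: the triviality of $\partial^{(2)}_\xi$ is needed precisely to upgrade $H^0(\rho)$ to an isomorphism, which is what allows $\Ima\mu$ to be compared with $\Ima H^0(\nu)$; and the positivity vanishing of Corollary \ref{farezero} is what turns the a priori subspace $\Ker(\cdots)$ into the entire group $H^1(S,\Omega^1_S\otimes\omega_S)$, so that no cohomology is lost in the estimate. Each individual computation is routine, but the inequality collapses if either vanishing is omitted, so essentially all the content lies in recognising that these are the two hypotheses to feed in.
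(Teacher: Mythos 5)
Your proposal is correct and follows essentially the same route as the paper: the isomorphism $H^0(\rho)$ from $\partial^{(2)}_{\xi}=0$ and $q(S)=0$, the diagram chase giving $\mathrm{Im}\,\mu\subseteq\mathrm{Im}\,H^0(\nu)$, the identification $\mathrm{coker}\,H^0(\nu)\cong H^1(S,\Omega^1_S\otimes\omega_S)$ via Corollary \ref{farezero} and Serre duality, and finally $\xi\neq 0$ to get $h^1(S,T_S)\geq 1$. You have merely spelled out some steps (the surjection of cokernels, the Serre-duality computation) that the paper leaves implicit.
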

\begin{proof} By Theorem \ref{regularcase}
$H^0(\rho)\colon H^0(S, {\rm{Sym}}^2(\sQ_S)\otimes_{\sO_S}\sO_S(1))\to H^0(S,\omega_S)$ is an isomorphism then
$$
H^0(\rho\otimes {\rm{id}})\colon H^0(S, {\rm{Sym}}^2(\sQ_S)\otimes_{\sO_S}\sO_S(1)\otimes H^0(S,\omega_S))\to
H^0(S,\omega_S)\otimes H^0(S,\omega_S)
$$
is an isomorphism. Then by the cohomology of the above diagram $${\rm{dim}}_{\mathbb C}{\rm{coker}}\mu\geq 
{\rm{dim}}_{\mathbb C}{\rm{coker}}H^0(\nu).$$

By Corollary \ref{farezero} $H^1(S, {\rm{Sym}}^2(\sQ_S)\otimes_{\sO_S}\omega_S(1))=0$. Then by Serre duality ${\rm{dim}}_{\mathbb C}{\rm{coker}}H^0(\nu)=h^1(S,T_S)$.  By Theorem \ref{infty} $h^1(S,T_S)\geq 1$. Then the claim follows.
\end{proof}

\section{Infinitesimal deformations of the surface of bitangents}
This section is entirely devoted to show that $$h^1(S, \Omega^1_S\otimes_{\sO_S}\omega_S)=h^1(S,T_S)=20.$$ 

We tensor  the tangent bundle sequence (\ref{simsigma}) by $\otimes_{\sO_S}\omega_S$ to obtain

\begin{equation}\label{tortatorta}
0\to\omega_S\to{\rm{Sym}}^2 (\sQ_S) \otimes \sO_S (\sigma)\otimes_{\sO_S}\omega_S\to \Omega^1_S\otimes_{\sO_S}\omega_S\to 0
\end{equation}

Since $q(S)=h^1(S,\omega_S)=0$, $45=h^0(S,\omega_S)$ to compute $h^0(S, \Omega^1_S\otimes_{\sO_S}\omega_S)$ is equivalent to compute $h^0(S,{\rm{Sym}}^2 (\sQ_S) \otimes \sO_S (\sigma)\otimes_{\sO_S}\omega_S)$. This computation relies on the strategy outlined in Remark \ref{swiTph}.

\begin{prop}\label{calcolo} It holds that $h^0(S, {\rm{Sym}}^2 (\sQ_S) \otimes \sO_S (\sigma)\otimes_{\sO_S}\omega_S)=325$.
\end{prop}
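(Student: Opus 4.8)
The plan is to reduce everything to a cohomology group on the K3 surface $X$, following the strategy of Remark \ref{swiTph}. First I would simplify the bundle. Since $\sigma$ is $2$-torsion and $K_S = 3H_{\mathbb G|S} + \sigma$ by Theorem \ref{formulae}, one gets $\mathrm{Sym}^2(\sQ_S) \otimes \sO_S(\sigma) \otimes_{\sO_S}\omega_S = \mathrm{Sym}^2(\sQ_S)\otimes\sO_S(3H_{\mathbb G|S})$, so it suffices to prove $h^0(S, \mathrm{Sym}^2(\sQ_S)(3)) = 325$. Writing this as a section space on the projective bundle $\pi_S\colon \mP(\sQ_S)\to S$, namely $V := H^0(\mP(\sQ_S), \sO(2R + 3\pi_S^\star H_{\mathbb G|S}))$ (using $\pi_{S\star}\sO(2R) = \mathrm{Sym}^2\sQ_S$ and $R^1\pi_{S\star}\sO(2R)=0$), I would exploit that $Y \in |2R + \pi_S^\star\sigma|$ inside $\mP(\sQ_S)$ by Proposition \ref{ilrivestimento doppio ramificato}.

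Restricting $L := 2R + 3\pi_S^\star H_{\mathbb G|S}$ to $Y$ yields the exact sequence $0 \to \sO(L - Y) \to \sO(L) \to \sO_Y(L) \to 0$ on $\mP(\sQ_S)$, where $L - Y = \pi_S^\star(3H_{\mathbb G|S} - \sigma)$. Since $3H_{\mathbb G|S} - \sigma = K_S$ (again as $2\sigma = 0$), pushing down by $\pi_S$ identifies $H^\bullet(\mP(\sQ_S), L-Y)$ with $H^\bullet(S, \omega_S)$; hence the kernel contributes $h^0(S,\omega_S) = p_g(S) = 45$ and, because $h^1(S,\omega_S) = q(S) = 0$, the restriction $V \to H^0(Y, L|_Y)$ is surjective, giving $\dim V = 45 + h^0(Y, L|_Y)$. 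Using $R|_Y = \rho^\star h$ and $\pi^\star H_{\mathbb G|S} = H_X|_Y = (T + 2\rho^\star h)|_Y$ from Lemma \ref{relativeonX}, one finds $L|_Y = (3T + 8\rho^\star h)|_Y$.

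Next I would compute $h^0(Y, (3T + 8\rho^\star h)|_Y)$ by viewing $Y$ as the divisor $|6T + 8R| = |6T + 8\rho^\star h|$ inside $\mP(\Omega^1_X(1)) = \mP(\Omega^1_X)$ (Proposition \ref{sopraX}). The restriction sequence there has kernel $\sO(3T + 8\rho^\star h - Y) = \sO(-3T)$. Since $\rho_{X\star}\sO(-3T) = 0$ and, by relative duality on the $\mP^1$-bundle together with $\det\Omega^1_X = \omega_X = \sO_X$, one has $R^1\rho_{X\star}\sO(-3T) \cong \sT_X$, the Leray spectral sequence gives $H^0(\mP(\Omega^1_X), -3T) = 0$ and $H^1(\mP(\Omega^1_X), -3T) = H^0(X, \sT_X) = 0$ because $X$ is a K3 surface. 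Therefore $h^0(Y, L|_Y) = H^0(\mP(\Omega^1_X), 3T + 8\rho^\star h) = h^0(X, \mathrm{Sym}^3\Omega^1_X(8))$ by the projection formula and $\rho_{X\star}\sO(3T) = \mathrm{Sym}^3\Omega^1_X$.

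Finally I would evaluate $h^0(X, \mathrm{Sym}^3\Omega^1_X(8))$ on the quartic K3. A Chern-class computation (Chern roots $\pm a, \pm 3a$ of $\mathrm{Sym}^3\Omega^1_X$ with $a^2 = -24$, and $h^2 = 4$) gives $\mathrm{ch}(\mathrm{Sym}^3\Omega^1_X(8)) = 4 + 32h + 272[\mathrm{pt}]$, whence Riemann--Roch yields $\chi(X, \mathrm{Sym}^3\Omega^1_X(8)) = 272 + 4\cdot 2 = 280$. Serre duality gives $h^2 = h^0(\mathrm{Sym}^3\sT_X(-8)) = 0$ for degree reasons, so the crux is the vanishing $h^1(X, \mathrm{Sym}^3\Omega^1_X(8)) = 0$, which I expect to be the main obstacle. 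I would establish it by filtering $\mathrm{Sym}^3(\Omega^1_{\mP^3|X})(8)$ through the conormal sequence (\ref{conormal}) and the restricted Euler sequence, and checking that the relevant $H^1$ groups vanish via Bott's formula on $\mP^3$ together with the K3 restriction. Granting this, $h^0 = \chi = 280$, and $\dim V = 45 + 280 = 325$.
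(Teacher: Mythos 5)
Your proposal is correct and, up to the final evaluation, it is the paper's own argument: you absorb the $2$-torsion class $\sigma$, push down to $\mP(\sQ_S)$, split off $h^0(S,\omega_S)=45$ using $Y\in|2R+\pi_S^{\star}\sigma|$ and $q(S)=0$, transfer to $\mP(\Omega^1_X(1))$ where $Y\in|6T+8R|$, and dispose of $\sO(-3T)$ --- your relative-duality computation $R^1\rho_{X\star}\sO(-3T)\cong \sT_X$ is just the fibrewise dual of the paper's Serre-duality step $H^1(\sO(-3T))\cong H^2(\sO(T))^{\vee}=H^2(X,\Omega^1_X)^{\vee}=0$. The genuine divergence is in how you compute $h^0(X,{\rm{Sym}}^3\Omega^1_X(8))$. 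The paper computes it \emph{directly}: the symmetrized conormal sequence $0\to{\rm{Sym}}^2\Omega^1_{\mP^3|X}(4)\to{\rm{Sym}}^3\Omega^1_{\mP^3|X}(8)\to{\rm{Sym}}^3\Omega^1_X(8)\to 0$, combined with restriction/Euler (Bott-type) computations on $\mP^3$, gives $h^0=300-20=280$; the only vanishing on $X$ it needs is $h^1(X,{\rm{Sym}}^2\Omega^1_{\mP^3|X}(4))=0$, and the group $h^1(X,{\rm{Sym}}^3\Omega^1_X(8))$ never enters. You instead compute $\chi=280$ by Riemann--Roch (your Chern-class arithmetic is right: ${\rm{ch}}=4+32h+272[{\rm{pt}}]$, $\chi=272+8=280$) and must then prove \emph{two} vanishings, both of which you defer: $h^2=0$ (``degree reasons'') and $h^1=0$ (sketched). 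Both are true, and your sketched route does close them: $h^2=h^0(X,{\rm{Sym}}^3\sT_X(-8))=0$ because ${\rm{Sym}}^3\sT_X(-8)$ injects into ${\rm{Sym}}^3 T_{\mP^3|X}(-8)$, whose Euler filtration restricted to $X$ involves only the twists $\sO_X(-5)$ and $\sO_X(-6)$; and $h^1=0$ follows from $h^1(X,{\rm{Sym}}^3\Omega^1_{\mP^3|X}(8))=0$ together with $h^2(X,{\rm{Sym}}^2\Omega^1_{\mP^3|X}(4))=0$, both consequences of the same restriction and Euler sequences (the latter additionally needs $h^3(\mP^3,{\rm{Sym}}^2\Omega^1_{\mP^3})=0$, which Bott also gives). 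So the two endgames cost comparable work; the paper's is slightly leaner (one auxiliary vanishing instead of three), while yours buys an independent consistency check --- indeed, combined with the paper's count it proves $h^1(X,{\rm{Sym}}^3\Omega^1_X(8))=0$. Be aware, though, that as written your argument is a plan rather than a proof: the deferred Bott-type vanishings are precisely where the numerical content lives and must actually be carried out.
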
 
\begin{proof} We consider $\mP(\sQ_S)$ and the diagram \ref{diagrammabase}. We recall that 
$${\rm{Sym}}^2 (\sQ_S) \otimes \sO_S (\sigma)\otimes_{\sO_S}\omega_S=
\pi_{S\star}\sO_{\mP(\sQ_S)}(2R+  \pi_S^{\star}(\sigma+\omega_S)).
$$
By Proposition \ref{ilrivestimento doppio ramificato} for the $2$-to-$1$ cover $\pi\colon Y\to S$ it holds that $Y\in |2R+\pi_S^{\star}(\sigma)|$. 
In particular $2R+  \pi_S^{\star}(\sigma+\omega_S)\sim Y+\pi_S^{\star}(\omega_S)$ Then we can write the following exact sequence on $\mP(\sQ_S)$:
\begin{equation}\label{soYYY}
0\to \sO_{\mP(\sQ_S)}( \pi_S^{\star}(\omega_S))\to\sO_{\mP(\sQ_S)}(Y+\pi_S^{\star}(\omega_S)) \to\sO_{Y}(Y+\pi_S^{\star}(\omega_S))\to0
\end{equation}
Since $R^1\pi_{S\star}\sO_{\mP(\sQ_S)}( \pi_S^{\star}(\omega_S))=0$, $h^0(S,\omega_S)=45$ and since $S$ is a regular surface it holds that 
$$
h^0(S, {\rm{Sym}}^2 (\sQ_S) \otimes \sO_S (\sigma)\otimes_{\sO_S}\omega_S)=45+h^0(\sO_{Y}(Y+\pi_S^{\star}(\omega_S))
$$
Since $\omega_S=\sO_S(3H_{\mathbb G}+\sigma)$ and $Y\in |2R+\pi_S^{\star}(\sigma)|$ we can also write that
$$
Y+\pi_S^{\star}(\omega_S)\sim \pi_S^{\star}(3H_{\mathbb G})+2R.
$$
To compute $h^0(\sO_{Y}(Y+\pi_S^{\star}(\omega_S))$ we switch to the geometry on $X$. Indeed by Proposition \ref{sopraX} we have that $Y\in |6T+8R|$ as a divisor of ${\rm{Pic}}(\mP(\Omega^1_X(1)))$. By Lemma \ref{relativeonX} we have $R\sim \rho_{X}^{\star}h$ and $\pi_S^{\star}(3H_{\mathbb G})_{|\mP(\Omega^1_X(1))}=3T+6\rho_{X}^{\star}h$. This means that the divisor $Y+\pi_S^{\star}(\omega_S)$ is induced on $Y$ by the divisor $3T+8\rho_{X}^{\star}h$. By the structure sequence of $Y$ inside $\mP(\Omega^1_X(1))$ we can write:

\begin{equation}\label{soXXXX}
0\to \sO_{\mP(\Omega^1_X(1))}(-3T)\to\sO_{\mP(\Omega^1_X(1))}(3T+8\rho_{X}^{\star}h)\to\sO_{Y}(3T+8\rho_{X}^{\star}h)\to 0
\end{equation}
Since the canonical divisor $K_{\mP(\Omega^1_X(1))}=-2T$, by Serre duality 
$$H^1(\mP(\Omega^1_X(1)), \sO_{\mP(\Omega^1_X(1))}(-3T))=H^2(\mP(\Omega^1_X(1)), \sO_{\mP(\Omega^1_X(1))}(T)).$$

Since $R^1\rho_{X\star}\sO_{\mP(\Omega^1_X(1))}(T)=0$ then $$H^2( \mP(\Omega^1_X(1)),\sO_{\mP(\Omega^1_X(1))}(T))=H^2(X,\Omega^1_X)=0.$$ In particular it holds that:
$$
h^0(\sO_{Y}(Y+\pi_S^{\star}(\omega_S))=\sO_{Y}(3T+8\rho_{X}^{\star}h)=H^0(\mP(\Omega^1_X(1)),\sO_{\mP(\Omega^1_X(1))}(3T+8\rho_{X}^{\star}h).
$$
By standard theory we know that $\rho_{X\star}\sO_{\mP(\Omega^1_X)}(3T+8\rho_X^{\star}(h))={\rm{Sym}}^{3}\Omega^1_X(8))$. Then by the symmetrisation of the co-normal sequence of $X$, see c.f. \cite[Lemma 2.9 page 19]{AK}, and suitable twisting of it we obtain:
\begin{equation}\label{symmsuX}
0\to {\rm{Sym}}^{2}\Omega^1_{\mP^3_{|X}}(4)\to{\rm{Sym}}^{3}\Omega^1_{\mP^3_{|X}}(8)\to{\rm{Sym}}^{3}\Omega^1_X(8)\to 0
\end{equation}
Now by the standard exact sequence which holds for every $n,m\in\mathbb Z$ :
\begin{equation}\label{vaiinrestrizione}
0\to{\rm{Sym}}^{m}\Omega^1_{\mP^3}(n-4)\to {\rm{Sym}}^{m}\Omega^1_{\mP^3}(n) \to{\rm{Sym}}^{m}\Omega^1_{\mP^3_{|X}}(n)\to0
\end{equation}
we can build the following diagram:

\begin{equation*}
\xymatrix { 
&0\ar[d]&0\ar[d]&\\
0\ar[r]& {\rm{Sym}}^{2}\Omega^1_{\mP^3_{}}\ar[d]&{\rm{Sym}}^{3}\Omega^1_{\mP^3_{}}(4)\ar[d]&&\\
0\ar[r]& {\rm{Sym}}^{2}\Omega^1_{\mP^3_{}}(4)\ar[d]&{\rm{Sym}}^{3}\Omega^1_{\mP^3_{}}(8)\ar[d]&&\\
0\ar[r]& {\rm{Sym}}^{2}\Omega^1_{\mP^3_{|X}}(4)\ar[r]\ar[d]&{\rm{Sym}}^{3}\Omega^1_{\mP^3_{|X}}(8)\ar[r]^-{\nu}\ar[d]&{\rm{Sym}}^{3}\Omega^1_X(8)\ar[r]&0\\
&0&0&
}
\end{equation*}
Since for every $n,m\in\mathbb Z$ it holds that:
\begin{equation}\label{simpenne}
0\to {\rm{Sym}}^{m}\Omega^1_{\mP^3}(n)\to {\rm{Sym}}^{m}V(-m+n)\to {\rm{Sym}}^{m-1}V(-m+n+1)\to 0
\end{equation}
where for every $n,m\in\mathbb Z$ we have set ${\rm{Sym}}^{m}V(n):= {\rm{Sym}}^{m}V\otimes_{\sO_{\mP^3}} \sO_{\mP^3}(nH_{\mP^3})$. By direct computation, or by c.f. \cite[Proposition 3.40 page 40]{W},
we obtain the following numerical results:

$$
h^0(\mP^3,{\rm{Sym}}^{3}\Omega^1_{\mP^3_{}}(8))=280,\,\, h^i(\mP^3,{\rm{Sym}}^{3}\Omega^1_{\mP^3_{}}(8))=0,\, \forall i>0
$$
$$
h^1(\mP^3,{\rm{Sym}}^{3}\Omega^1_{\mP^3_{}}(4))=20,\,\, h^i(\mP^3,{\rm{Sym}}^{3}\Omega^1_{\mP^3_{}}(4))=0,\, \forall i>0, i\neq 1
$$

$$
h^0(\mP^3,{\rm{Sym}}^{2}\Omega^1_{\mP^3_{}}(4))=20,\,\, h^i(\mP^3,{\rm{Sym}}^{2}\Omega^1_{\mP^3_{}}(4))=0,\, \forall i>0
$$
$$
h^i(\mP^3,{\rm{Sym}}^{2}\Omega^1_{\mP^3_{}})=0,\,\, i=0,1,2
$$
Then $$20=h^0(\mP^3,{\rm{Sym}}^{2}\Omega^1_{\mP^3_{}}(4))=
 h^0(X, {\rm{Sym}}^{2}\Omega^1_{\mP^3_{|X}}(4)),$$ 
$0=h^1(X, {\rm{Sym}}^{2}\Omega^1_{\mP^3_{|X}}(4))$ and 
$$
 h^0(X,{\rm{Sym}}^{3}\Omega^1_{\mP^3_{|X}}(8))=h^0(\mP^3,{\rm{Sym}}^{3}\Omega^1_{\mP^3_{}}(8))+h^1(\mP^3,{\rm{Sym}}^{3}\Omega^1_{\mP^3_{}}(4))=280+20.
$$
Then $h^0(X, {\rm{Sym}}^{3}\Omega^1_X(8))=280$. The claim follows since $h^0(S, {\rm{Sym}}^2 (\sQ_S) \otimes \sO_S (\sigma)\otimes_{\sO_S}\omega_S)=45+h^0(X, {\rm{Sym}}^{3}\Omega^1_X(8))=45+280=325$.

\end{proof}
\begin{thm}\label{domandareferee} $h^1(S,T_S)=20$.
\end{thm}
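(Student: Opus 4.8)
The plan is to reduce everything to a cohomology computation that has already been prepared in Proposition \ref{calcolo}, only this time tracking the higher cohomology instead of just $h^0$. First, since $T_S^{\vee}=\Omega^1_S$ and $(\Omega^1_S\otimes_{\sO_S}\omega_S)^{\vee}\otimes_{\sO_S}\omega_S=T_S$, Serre duality gives $h^1(S,T_S)=h^1(S,\Omega^1_S\otimes_{\sO_S}\omega_S)$, which is exactly the quantity displayed at the opening of this section. I would then run the long exact cohomology sequence of the twisted tangent sequence (\ref{tortatorta}), abbreviating the middle term by $\sF:={\rm{Sym}}^2(\sQ_S)\otimes\sO_S(\sigma)\otimes_{\sO_S}\omega_S$. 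The relevant inputs are $h^1(S,\omega_S)=0$ and $h^2(S,\omega_S)=h^0(S,\sO_S)=1$ (regularity of $S$ and Serre duality), together with $h^2(S,\Omega^1_S\otimes_{\sO_S}\omega_S)=h^0(S,T_S)=0$, the last vanishing holding because a surface of general type carries no non-zero holomorphic vector field. These collapse (\ref{tortatorta}) to the four-term sequence $0\to H^1(S,\sF)\to H^1(S,\Omega^1_S\otimes\omega_S)\to \mC\to H^2(S,\sF)\to 0$, whence $h^1(S,\Omega^1_S\otimes\omega_S)=h^1(S,\sF)-h^2(S,\sF)+1$. It therefore remains only to compute the difference $h^1(S,\sF)-h^2(S,\sF)$.

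To compute the cohomology of $\sF$ I would follow the very same route as in Proposition \ref{calcolo}, now keeping all degrees. Pushing up to the $\mP^1$-bundle $\pi_S\colon\mP(\sQ_S)\to S$, where $R^j\pi_{S\star}$ of the line bundle $\sO(2R+\pi_S^{\star}(\sigma+\omega_S))$ vanishes for $j>0$, and using $Y\in|2R+\pi_S^{\star}\sigma|$ (Proposition \ref{ilrivestimento doppio ramificato}) to rewrite $2R+\pi_S^{\star}(\sigma+\omega_S)\sim Y+\pi_S^{\star}\omega_S$, the structure sequence (\ref{soYYY}) relates $H^{\bullet}(S,\sF)$ to the cohomology of $\sG:=\sO_Y(Y+\pi_S^{\star}\omega_S)=\sO_Y(3T+8\rho_X^{\star}h)$ on $Y$, the cohomology of $\sO(\pi_S^{\star}\omega_S)$ being $H^{\bullet}(S,\omega_S)$, which I already control. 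Switching to the geometry over $X$ by Proposition \ref{sopraX} and Lemma \ref{relativeonX}, I would then feed $\sG$ into the structure sequence (\ref{soXXXX}) on $P:=\mP(\Omega^1_X(1))$, namely $0\to\sO_P(-3T)\to\sO_P(3T+8\rho_X^{\star}h)\to\sG\to 0$.

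The decisive computation is that of $H^{\bullet}(P,\sO_P(-3T))$. Since $\omega_P=\sO_P(-2T)$, Serre duality on the threefold $P$ gives $H^i(P,\sO_P(-3T))\cong H^{3-i}(P,\sO_P(T))^{\vee}=H^{3-i}(X,\Omega^1_X)^{\vee}$, where the last equality uses $\rho_{X\star}\sO_P(T)=\Omega^1_X$ and the vanishing of the higher direct images. As $X$ is a smooth quartic, hence a K3 surface, its Hodge numbers give $H^0(X,\Omega^1_X)=H^2(X,\Omega^1_X)=0$ and $H^1(X,\Omega^1_X)=\mC^{20}$; consequently the cohomology of $\sO_P(-3T)$ is concentrated in degree $2$, with $h^2(P,\sO_P(-3T))=20$. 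On the other hand the diagram chase of Proposition \ref{calcolo}, carried out for all $i$, shows that $H^i(P,\sO_P(3T+8\rho_X^{\star}h))=H^i(X,{\rm{Sym}}^3\Omega^1_X(8))$ equals $\mC^{280}$ for $i=0$ and vanishes for $i>0$. Plugging these into the long exact sequence of (\ref{soXXXX}) yields $H^1(Y,\sG)=\mC^{20}$ and $H^2(Y,\sG)=0$; back-substituting through (\ref{soYYY}) gives the four-term sequence $0\to H^1(S,\sF)\to\mC^{20}\to\mC\to H^2(S,\sF)\to 0$, so $h^1(S,\sF)-h^2(S,\sF)=19$. Combined with the relation of the first paragraph this produces $h^1(S,T_S)=h^1(S,\Omega^1_S\otimes\omega_S)=19+1=20$.

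The main obstacle is conceptual rather than computational: one must recognise that, after all the reductions, the answer $20$ is precisely $h^{1,1}(X)=h^1(X,\Omega^1_X)$, the middle Hodge number of the K3 surface $X$, entering through $H^2(P,\sO_P(-3T))$. Everything else is bookkeeping that reuses the exact sequences and the $\mP^3$-level vanishings already assembled in Proposition \ref{calcolo}: the non-existence of vector fields $h^0(S,T_S)=0$, the concentration of the cohomology of ${\rm{Sym}}^3\Omega^1_X(8)$ in degree $0$, and the vanishing of the higher direct images under $\pi_S$ and $\rho_X$. The only genuinely new verification is the behaviour of $\sO_P(-3T)$, and there Serre duality together with the K3 Hodge diamond does all the work.
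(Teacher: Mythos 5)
Your proof is correct, but it reaches the number $20$ by a genuinely different route than the paper. The paper extracts from Proposition \ref{calcolo} only the value $h^0(S,\sF)=325$, deduces $h^2(S,T_S)=h^0(S,\Omega^1_S\otimes_{\sO_S}\omega_S)=325-45=280$ by Serre duality and the regularity of $S$, and then concludes by Riemann--Roch: $\chi(T_S)=2\chi(\sO_S)+K_S^2-c_2(S)=92+360-192=260$ (using the invariants of Theorem \ref{formulae}), whence $h^1(S,T_S)=0+280-260=20$. You never invoke Riemann--Roch or the Chern numbers $K_S^2$, $c_2(S)$: you push the same exact sequences (\ref{tortatorta}), (\ref{soYYY}), (\ref{soXXXX}) one degree higher in cohomology, and the $20$ enters as $h^2(P,\sO_P(-3T))=h^1(X,\Omega^1_X)=20$, the middle Hodge number of the K3 surface $X$. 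What your route buys: it is independent of the computation of $c_2(S)$, and it makes cohomologically transparent the coincidence remarked upon in the introduction, namely that $h^1(S,T_S)$ equals $h^{1,1}(X)=h^1(X,T_X)=20$, the dimension of the deformation space of the quartic itself. One point needs to be made explicit: your claim that the diagram chase of Proposition \ref{calcolo} ``carried out for all $i$'' yields $H^i(X,{\rm{Sym}}^{3}\Omega^1_X(8))=0$ for $i>0$ requires, in order to kill $H^2(X,{\rm{Sym}}^{2}\Omega^1_{\mP^3_{|X}}(4))$ in the chase, the vanishing $h^3(\mP^3,{\rm{Sym}}^{2}\Omega^1_{\mP^3})=0$, which the paper does not record (it lists only $h^i=0$ for $i=0,1,2$). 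This is easily supplied --- by Serre duality $h^3(\mP^3,{\rm{Sym}}^{2}\Omega^1_{\mP^3})=h^0(\mP^3,{\rm{Sym}}^{2}T_{\mP^3}(-4))$, which vanishes by the symmetric power of the Euler sequence $0\to V^{\vee}(-3)\to {\rm{Sym}}^2V^{\vee}(-2)\to {\rm{Sym}}^{2}T_{\mP^3}(-4)\to 0$ --- so it is a gap of justification rather than of substance, but you should state it. With that supplied, your bookkeeping is exact and consistent with the paper's numbers: the four-term sequence gives $h^1(S,\sF)-h^2(S,\sF)=19$, hence $h^1(S,T_S)=19+1=20$, in agreement with $\chi(\sF)=325-19=306=\chi(\omega_S)+\chi(T_S)=46+260$.
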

\begin{proof} By the cohomology of the sequence (\ref{tortatorta}), by Theorem \ref{formulae} and by Proposition \ref {calcolo}we have that
\begin{equation}\label{finefine}
h^0(S,\Omega^1_S\otimes_{\sO_S}\omega_S)=h^0(S, {\rm{Sym}}^2 (\sQ_S) \otimes \sO_S (\sigma)\otimes_{\sO_S}\omega_S)-h^0(S,\omega_S))=280.
\end{equation}

By Grothendiek-Riemann-Roch $\chi(T_S)=2\chi(\sO_S)+K_S^2-c_2(S)$ and by Theorem \ref{formulae} we obtain $\chi(T_S)=92+360-192=260$. By Serre duality and by equation (\ref{finefine}) $h^2(S,T_S)=h^0(S,\Omega^1_S\otimes_{\sO_S}\omega_S)=280$. Then $h^1(S,T_S)=20$ since $h^0(S,T_S)=0$.
\end{proof}

As a final check we can remark that the image of 
$$H^0(\nu)\colon H^0(S, {\rm{Sym}}^2(\sQ_S)\otimes_{\sO_S}\omega_S(1))\to H^0(S,\omega_S^{\otimes 2})$$ has dimension $386$ since a straightforward Chern class computation gives the following:
\begin{thm} It holds that $h^0(S, {\rm{Sym}}^2(\sQ_S)\otimes_{\sO_S}\omega_S(1))=666$.
\end{thm}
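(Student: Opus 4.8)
The plan is to reduce the computation of $h^0$ to an Euler-characteristic computation by killing the higher cohomology, and then to evaluate that Euler characteristic by Riemann--Roch on the surface $S$. The key observation is that the bundle in question is literally the one appearing in Corollary \ref{farezero}: writing $E:=\mathrm{Sym}^2(\sQ_S)\otimes_{\sO_S}\omega_S(1)$, we have $E=\omega_S\otimes\bigl(\mathrm{Sym}^2(\sQ_S)\otimes\sO_S(1)\bigr)$, and $\mathrm{Sym}^2(\sQ_S)\otimes\sO_S(1)$ is Nakano positive. Hence the Akizuki--Nakano vanishing theorem gives $H^q\bigl(S,\omega_S\otimes\mathrm{Sym}^2(\sQ_S)\otimes\sO_S(1)\bigr)=0$ for every $q\geq 1$, so $h^1(E)=h^2(E)=0$. (The vanishing $h^1(E)=0$ is exactly the content of Corollary \ref{farezero}; for $h^2(E)$ one may alternatively use Serre duality, which together with Lemma \ref{ARRONDOondo}(2) identifies $H^2(S,E)$ with the dual of $H^0\bigl(S,\mathrm{Sym}^2(\sQ_S)(-3)\bigr)$, and this vanishes because $\mathrm{Sym}^2(\sQ_S)(-3)$ is the dual of the ample bundle $\mathrm{Sym}^2(\sQ_S)(1)$.) Thus $h^0(E)=\chi(S,E)$.

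Next I would compute the Chern classes of $\sQ_S$ on $S$. Since $\bigwedge^2\sQ_S=\sO_S(1)$ we get $c_1(\sQ_S)=H$, where $H:=H_{\mathbb G|S}$ satisfies $H^2=\deg S=40$. For the second Chern class, the top Chern class of the universal quotient $\sQ$ on $\mathbb G$ is represented by the zero locus of the section $l\mapsto q_l(v)$ (for $v\in V$), which is exactly the Schubert cycle $\sigma_h=\{l\mid l\subset H_v\}$ of lines lying in a fixed plane; hence $c_2(\sQ)=\sigma_h$. By Lemma \ref{chowclass} this yields
$c_2(\sQ_S)=\sigma_h\cdot[S]=\sigma_h\cdot(28\sigma_h+12\sigma_p)=28$.
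From the splitting-principle identities for a rank-$2$ bundle, $c_1(\mathrm{Sym}^2\sQ_S)=3c_1(\sQ_S)$ and $c_2(\mathrm{Sym}^2\sQ_S)=2c_1(\sQ_S)^2+4c_2(\sQ_S)$, I then obtain $c_1(\mathrm{Sym}^2\sQ_S)=3H$ and $c_2(\mathrm{Sym}^2\sQ_S)=2\cdot 40+4\cdot 28=192$.

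Finally I would assemble the Riemann--Roch computation. Writing $E=\mathrm{Sym}^2(\sQ_S)\otimes L$ with $L=\omega_S(1)$ and noting that $\sigma$ is numerically trivial, so that $c_1(L)=4H$, the standard twisting formulae for a rank-$3$ bundle give $c_1(E)=c_1(\mathrm{Sym}^2\sQ_S)+3c_1(L)=15H$ and $c_2(E)=c_2(\mathrm{Sym}^2\sQ_S)+2c_1(\mathrm{Sym}^2\sQ_S)c_1(L)+3c_1(L)^2=192+72H^2=3072$. Plugging into the surface Riemann--Roch formula $\chi(E)=\tfrac12\,c_1(E)\bigl(c_1(E)-K_S\bigr)-c_2(E)+3\chi(\sO_S)$, with $K_S\equiv 3H$ numerically and $\chi(\sO_S)=1-q(S)+p_g(S)=46$ by Theorem \ref{formulae}, gives $\chi(E)=\tfrac12(15H)(12H)-3072+138=3600-3072+138=666$. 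Combined with the vanishing above, $h^0(S,E)=666$.

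The only genuinely delicate point is pinning down $c_2(\sQ_S)=28$ rather than $12$: one must correctly identify $c_2(\sQ)$ with the Schubert class $\sigma_h$ (lines in a plane) and not with $\sigma_p$ (lines through a point), since the two choices give $666$ and $730$ respectively. This hinges on the convention for $\mathbb P^3$ as the space of one-dimensional quotients of $V$, which fixes the geometric meaning of the vanishing locus of a section of $\sQ$; everything else is routine bookkeeping.
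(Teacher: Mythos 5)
Your proposal is correct and is essentially the paper's own (unwritten) argument: the paper justifies this theorem only by the phrase ``a straightforward Chern class computation,'' and what you give --- killing $h^1$ and $h^2$ via the Nakano positivity of Corollary \ref{farezero} (plus Serre duality, Lemma \ref{ARRONDOondo}(2) and ampleness for $h^2$), then Riemann--Roch with $c_1(\sQ_S)=H$, $c_2(\sQ_S)=28$, $K_S\equiv 3H$, $\chi(\sO_S)=46$ --- is precisely that computation, and all the numerics ($3600-3072+138=666$) check out. The one delicate point you flag, $c_2(\sQ_S)=\sigma_h\cdot[S]=28$, is moreover independently confirmed by Lemma \ref{finitezza}: the Grothendieck relation on $\mP(\sQ_S)$ gives $\deg(\rho_{\mP^3}\circ j_{\sQ_S})=R^3=H^2-c_2(\sQ_S)$, i.e. $12=40-c_2(\sQ_S)$, which settles the $\sigma_h$ versus $\sigma_p$ ambiguity in your favor.
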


\begin{ackn} The authors are very grateful to an anonymous referee who read in detail a previous version of this paper, pointed out several inaccuracies and suggested many improvements. In particular, he urged us to carry out the study of deformations which is the object of the last section in the present version of the paper. The authors are pleased to thank also prof. P. Craighero for useful conversations during an early stage of the present work.
 
This research is partially supported by the project PRID-DIMA-Geometry and by the PRIN 2017 ``Geometric, Analytic and Algebraic Aspects of Arithmetic'' . \end{ackn}

\end{document}